\newtheorem{THM}{Theorem}
\newtheorem{LE}[THM]{Lemma}
\newtheorem{PR}[THM]{Proposition}
\newtheorem{CN}[THM]{Conjecture}
\newtheorem{RE}[THM]{Remark}
\newtheorem{DE}[THM]{Definition}
\newcommand{\tr}{\triangle}
\newcommand{\ignore}[1]{}
\newcommand{\conv}{\mathrm{conv}}
\newcommand{\supp}{\mathrm{support}}
\newcommand{\si}[1]{\mathrm{si}\!\left(#1\right)}
\newcommand{\cuboid}[1]{\mathrm{cuboid}\!\left(#1\right)}
\newcommand{\cycle}[1]{\mathrm{cycle}\!\left(#1\right)}
\newcommand{\cocycle}[1]{\mathrm{cocycle}\!\left(#1\right)}
\newcommand{\core}[1]{\mathrm{core}\!\left(#1\right)}
\newcounter{claim_nb}[THM]
\newtheorem{CLM}[claim_nb]{Claim}
\newenvironment{cproof}
{\begin{proof}
[Proof of Claim]
\vspace{-1.2\parsep}}
{ 
\end{proof}}
\title{Idealness of $k$-wise intersecting families\footnote{An extended abstract of this work was published under the same title in the proceedings of the $21\textsuperscript{st}$ Conference in Integer Programming and Combinatorial Optimization~\cite{Abdi-2020}.}}
\author{Ahmad Abdi
\thanks{
Department of Mathematics, London School of Economics and Political Science, London, UK
}
 \and G\'{e}rard Cornu\'{e}jols
 \thanks{Tepper School of Business, Carnegie Mellon University, Pittsburgh, USA}
  \and Tony Huynh
  \thanks{School of Mathematics, Monash University, Melbourne, Australia}
   \and Dabeen Lee
   \thanks{Discrete Mathematics Group, Institute for Basic Science (IBS), Daejeon, South Korea}
}
\begin{document}


\maketitle

\begin{abstract}
A clutter is \emph{$k$-wise intersecting} if every $k$ members have a common element, yet no element belongs to all members.  We conjecture that, for some integer $k\geq 4$, every $k$-wise intersecting clutter is non-ideal. As evidence for our conjecture, we prove it for $k=4$ for the class of binary clutters. Two key ingredients for our proof are Jaeger's $8$-flow theorem for graphs, and Seymour's characterization of the binary matroids with the sums of circuits property. As further evidence for our conjecture, we also note that it follows from an unpublished conjecture of Seymour from 1975. We also discuss connections to the chromatic number of a clutter, projective geometries over the two-element field, uniform cycle covers in graphs, and quarter-integral packings of value two in ideal clutters.
\end{abstract}

\section{Introduction}

Let $V$ be a finite set of {\it elements}, and let $\mathcal{C}$ be a family of subsets of $V$ called {\it members}. The family $\mathcal{C}$ is a {\it clutter} over {\it ground set $V$}, if no member contains another one~\cite{Edmonds70}. A {\it cover of $\mathcal{C}$} is a subset $B\subseteq V$ such that $B\cap C\neq \emptyset$ for all $C\in \mathcal{C}$. Consider for $w\in \mathbb{Z}^V_+$ the dual pair of linear programs
\begin{align*}
(P)\quad&\begin{array}{ll} \min \quad  &w^\top x\\ \text{s.t.} & \sum \left(x_u:u\in C\right)\geq 1 \quad \forall C\in \mathcal{C} \\ & x\geq {\bf 0} \end{array}\qquad
(D)\quad&\begin{array}{ll} \max \quad  &{\bf 1}^\top y\\ \text{s.t.} & \sum \left(y_C:u\in C\in \mathcal{C}\right)\leq w_u \quad \forall u\in V\\ & y\geq {\bf 0}.\end{array}
\end{align*} If the dual $(D)$ has an integral optimal solution for every right-hand-side vector $w\in \mathbb{Z}^V_+$, then $\mathcal{C}$ is said to have the {\it max-flow min-cut (MFMC) property}~\cite{Conforti93}. By the theory of {\it totally dual integral} linear systems, for every MFMC clutter, the primal $(P)$ also admits an integral optimal solution for every cost vector $w\in \mathbb{Z}^V_+$~\cite{Edmonds77}. This is why the class of MFMC clutters is a natural host to many beautiful {\it min-max theorems} in Combinatorial Optimization~\cite{Cornuejols01}. Let us elaborate.

The {\it packing number of $\mathcal{C}$}, denoted $\nu(\mathcal{C})$, is the maximum number of pairwise disjoint members. Note that $\nu(\mathcal{C})$ is equal to the maximum value of an integral feasible solution to $(D)$ for $w={\bf 1}$. Furthermore, the covers correspond precisely to the $0-1$ feasible solutions to $(P)$. The {\it covering number of $\mathcal{C}$}, denoted $\tau(\mathcal{C})$, is the minimum cardinality of a cover. Notice that $\tau(\mathcal{C})$ is equal to the minimum value of an integral feasible solution to $(P)$ for $w={\bf 1}$. Also, by Weak LP Duality, $\tau(\mathcal{C})\geq \nu(\mathcal{C})$. The clutter $\mathcal{C}$ {\it packs} if $\tau(\mathcal{C})=\nu(\mathcal{C})$ \cite{Seymour77}. Observe that if a clutter is MFMC, then it packs.

\begin{DE}[\cite{Cornuejols94}]
$\mathcal{C}$ is an \emph{ideal} clutter if the primal $(P)$ has an integral optimal solution for every cost vector $w\in \mathbb{Z}^V_+$.
\end{DE}

Ideal clutters form a rich class of objects, one that contains the class of MFMC clutters, as discussed above. This containment is strict, and in fact, some of the richest examples of ideal clutters are those that are not MFMC~\cite{Guenin01,Lucchesi78}.

A clutter is {\it intersecting} if every two members intersect yet no element belongs to every member~\cite{Abdi-int}. That is, a clutter $\mathcal{C}$ is intersecting if $\tau(\mathcal{C})\geq 2$ and $\nu(\mathcal{C})=1$. In particular, an intersecting clutter does not pack, and therefore is not MFMC. Intersecting clutters, however, may be ideal. For instance, the clutter $$Q_6:=\{\{1,3,6\},\{1,4,5\},\{2,3,5\},\{2,4,6\}\},$$ whose elements are the edges and whose members are the triangles of $K_4$, is an intersecting clutter that is ideal~\cite{Seymour77}. In fact, $Q_6$ is the smallest intersecting clutter which is ideal~(\cite{Abdi-cuboids}, Proposition 1.2).

The fact that intersecting clutters may be ideal is counterintuitive, because such clutters are blatantly non-MFMC. Nonetheless, we expect our intuition to be close to the truth. In this paper, we study a sequence of stricter versions of the intersecting condition, and conjecture that this sequence of notions eventually leads to non-idealness. 

\begin{DE}
$\mathcal{C}$ is \emph{$k$-wise intersecting} if every subset of at most $k$ members have a common element, yet no element belongs to all members. 
\end{DE}

Note that for $k=2$, this notion coincides with the notion of intersecting clutters. Furthermore, for $k\geq 3$, a $k$-wise intersecting clutter is also $(k-1)$-wise intersecting. The following is our main conjecture.  

\begin{CN}\label{main-con}
There exists an integer $k\geq 4$ such that every $k$-wise intersecting clutter is non-ideal.
\end{CN}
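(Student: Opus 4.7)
My strategy is to attack the conjecture in two stages: first to establish it in the highly structured subclass of binary clutters for $k=4$, and only then to look for a general mechanism that lifts the result to arbitrary clutters, possibly at the cost of a larger universal $k$.

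For the binary case, I would encode a clutter $\mathcal{C}$ as a pair $(M,\Sigma)$ with $M$ a binary matroid and $\Sigma$ a signature, so that members of $\mathcal{C}$ correspond to signed circuits of $M$. Idealness of a binary clutter is governed by the absence of certain forbidden minors, and Seymour's theorem on the sums of circuits property gives a decomposition of the binary matroids in which those obstructions are absent. Starting from a hypothetical ideal, $4$-wise intersecting binary clutter, I would apply this decomposition to reduce to the graphic (or cographic) case. There, Jaeger's $8$-flow theorem furnishes three nowhere-zero $\mathbb{Z}_2$-flows summing to a nowhere-zero $8$-flow, equivalently a partition of the edge set into three cocycles. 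Translating this partition back to the clutter side, I would argue that each cocycle in the partition plays the role of a cover of $\mathcal{C}$, and that three pairwise disjoint covers force four members of $\mathcal{C}$ with no common element, contradicting the $4$-wise intersecting hypothesis.

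The engine of the argument is thus an interplay between two invariants of $\mathcal{C}$: a chromatic-type parameter counting the fewest covers whose union is the ground set, and the largest $k$ for which $\mathcal{C}$ is $k$-wise intersecting. Bounding the former from above via a flow theorem and the latter from below by hypothesis should yield the contradiction, so the technical heart of the proof is turning the flow/cover structure provided by Jaeger's theorem into a quantitative bound on how many members can share a common element.

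For the general, non-binary case, I would attempt to replace Jaeger's $8$-flow theorem by a more abstract principle to the effect that every ideal clutter admits a small collection of covers partitioning the ground set, perhaps via the quarter-integral packings of value two mentioned in the introduction. The main obstacle is precisely this step: matroid decomposition, Jaeger's theorem, and the sums of circuits characterization are all intrinsically binary, so replacing them without appealing to Seymour's unpublished $1975$ conjecture appears to be the crux of the difficulty, and presumably the reason the authors restrict themselves to the binary case in this paper.
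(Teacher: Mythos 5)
The statement you were given is a conjecture; the paper does not prove it in general, only for binary clutters (Theorem~\ref{main}), and offers Conjectures~\ref{seymour-quarter} and~\ref{PG-2} as possible avenues toward the general case. Your plan correctly identifies the two key ingredients of the paper's binary argument, namely Seymour's characterization of binary matroids with the sums of circuits property and Jaeger's $8$-flow theorem, and you correctly recognize that these tools are intrinsically binary, which is indeed why the paper stops at the binary case. However, there are substantive errors and gaps in how you propose to assemble these ingredients.

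Your rendering of Jaeger's theorem is wrong in two ways: a nowhere-zero $\mathbb{Z}_2^3$-flow decomposes into three $\mathbb{Z}_2$-flows whose supports are \emph{cycles}, not cocycles, and these three cycles \emph{cover} the edge set but need not partition it; in particular, the individual $\mathbb{Z}_2$-components are not nowhere-zero. More significantly, you skip the chain of structural reductions that makes Seymour's decomposition applicable in the first place. The paper first passes to a deletion minor that is \emph{tangled} (covering number exactly two, every element in a minimum cover); binariness then forces this minor to be a duplication of a cuboid $\cuboid{S}$ with $S$ a binary space (Proposition~\ref{binaryclutter->cuboid}); idealness of the cuboid is equivalent to $S$ being cube-ideal (Theorem~\ref{ideal-cuboids}); and cube-idealness of a binary space is equivalent to the sums of circuits property of the associated binary matroid (Theorem~\ref{soc-cubeideal}). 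Only then does the decomposition theorem apply, and one must still verify that $F_7$ and $M(V_8)^\star$ admit $3$-cycle covers and that $1$-, $2$-, and $Y$-sums preserve $3$-cycle covers. Jumping directly from ``ideal binary clutter'' to ``sums of circuits property'' is not a valid inference. Finally, the mechanism by which a $3$-cycle cover yields non-$4$-wise-intersecting is not ``three pairwise disjoint covers'': it is that ${\bf 0}, \chi_{C_1}, \chi_{C_2}, \chi_{C_3}$ are four points of $S$ that do not agree on any coordinate (Remark~\ref{matroid-cyclespace}), which by Remark~\ref{kwise-kwise} correspond exactly to four members of $\cuboid{S}$ with no common element. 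Your ``disjoint covers'' picture conflates the clutter with its blocker and does not directly produce the required members.
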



In fact, we conjecture that the above holds for $k=4$. We prove this for an important class of clutters. A clutter is {\it binary} if the symmetric difference of any odd number of members contains a member~\cite{Lehman64}. Many rich classes of clutters are in fact binary~\cite{Cornuejols01}. For example, given a graph $G=(V,E)$ and distinct vertices $s$ and $t$, the clutter of $st$-paths over ground set $E$ is binary. The clutter $Q_6$ is also binary. As evidence for Conjecture~\ref{main-con}, our main result is that it holds for all binary clutters.  

\begin{THM}\label{main}
Every $4$-wise intersecting binary clutter is non-ideal.
\end{THM}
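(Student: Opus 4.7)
The plan is to prove the contrapositive: every ideal binary intersecting clutter $\mathcal{C}$ admits four members whose common intersection is empty. Since $\mathcal{C}$ is binary, I represent it as the port of a binary matroid $M$ at a distinguished element $*$, so that members of $\mathcal{C}$ correspond bijectively to circuits of $M$ containing $*$ (each member being $C \setminus \{*\}$). The blocker $b(\mathcal{C})$ is then the port of the dual matroid $M^*$ at $*$, and both $\mathcal{C}$ and $b(\mathcal{C})$ are ideal. By routine simplifications---deleting parallel elements, removing coloops, and passing to an appropriate minor---I arrange that $M \setminus \{*\}$ has no coloop, and that idealness (together with its minor-closed nature) can be leveraged to rule out an $F_7^*$ minor of the relevant matroid derived from $M$.

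Under these hypotheses, Seymour's characterization of binary matroids with the sums of circuits property, combined with Jaeger's $8$-flow theorem (recast matroidally as the existence of a nowhere-zero $\mathbb{Z}_2^3$-flow on the appropriate binary matroid), yields three cycles $Z_1, Z_2, Z_3$ of $M$, each disjoint from $\{*\}$, whose union is all of $V = E(M) \setminus \{*\}$. I then fix any member $C_0$ of $\mathcal{C}$, viewed as a circuit of $M$ through $*$. For $i = 1, 2, 3$, the symmetric difference $C_0 \triangle Z_i$ is an element of the cycle space of $M$ containing $*$, and so contains a circuit $C_i$ of $M$ through $*$. Taking $M_i := C_i \setminus \{*\}$ and $M_0 := C_0 \setminus \{*\}$, I claim that $M_0 \cap M_1 \cap M_2 \cap M_3 = \emptyset$: if some $e$ lay in all four, then $e \in C_0$ together with $e \in C_i \subseteq C_0 \triangle Z_i$ would force $e \notin Z_i$ for each $i$, contradicting $e \in V = Z_1 \cup Z_2 \cup Z_3$. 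Hence $\mathcal{C}$ fails to be $4$-wise intersecting.

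The main obstacle is the structural reduction step: verifying that the idealness of $\mathcal{C}$ and $b(\mathcal{C})$, after the above simplifications, genuinely excludes $F_7^*$ from the minor of $M$ to which Seymour's theorem must be applied, and that the resulting three cycles can be taken to avoid $*$ and jointly cover all of $V$. Identifying the correct target matroid (e.g.\ among $M$, $M/*$, and $M \setminus *$) and pushing the nowhere-zero $\mathbb{Z}_2^3$-flow back to three cycles of $M$ avoiding $*$ is the delicate combinatorial heart of the argument; once this is achieved, the symmetric-difference construction above is essentially mechanical.
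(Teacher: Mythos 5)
Your concluding symmetric-difference argument is correct and, in fact, is essentially the same mechanism the paper uses: given three cycles $Z_1,Z_2,Z_3$ whose union is the ground set and a base point, the four points ${\bf 0},\chi_{Z_1},\chi_{Z_2},\chi_{Z_3}$ fail to agree on any coordinate, which translates to four members with empty intersection (see Remarks~\ref{graph-cyclespace} and~\ref{matroid-cyclespace}). The appeal to Seymour's decomposition theorem for the sums of circuits property, combined with Jaeger's $8$-flow theorem, is also the correct engine: this is precisely Theorem~\ref{soc-3cyclecover}.

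The genuine gap is the structural reduction you flag yourself, and it is more serious than you indicate. First, it is not enough to ``rule out an $F_7^*$ minor'': Seymour's excluded-minor characterization of the sums of circuits property is richer than a single forbidden minor, and the paper invokes the full decomposition theorem (Theorem~\ref{seymour-soc}), not a one-minor exclusion. Second, and more fundamentally, there is no direct bridge from the hypothesis ``the port of $M$ at $*$ is ideal'' to the conclusion ``$M\setminus *$ (or $M/*$, or any named deletion/contraction of $M$) has the sums of circuits property.'' Idealness of a binary clutter is governed by clutter-minor obstructions ($\mathcal{L}_{F_7}$, $\mathcal{O}_{K_5}$, $b(\mathcal{O}_{K_5})$ in Seymour's conjecture), whereas the sums of circuits property is a matroid-theoretic condition on a cycle space; these live in different worlds, and you do not supply the translation. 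The paper's solution to exactly this difficulty is the entire machinery of \S\ref{sec:cuboids}--\S\ref{sec:proof}: pass to a deletion minor that is \emph{tangled} (Remark~\ref{kwise-deletion}), observe that a tangled binary clutter is a duplication of a cuboid $\cuboid{S}$ with $S$ a binary space containing ${\bf 0}$ (Proposition~\ref{binaryclutter->cuboid}), and then use the equivalences that $\cuboid{S}$ is ideal iff $S$ is cube-ideal (Theorem~\ref{ideal-cuboids}) and that a binary space is cube-ideal iff the underlying matroid has the sums of circuits property (Theorem~\ref{soc-cubeideal}). It is only through this cuboid/tangled-clutter detour that idealness of the clutter is converted into the sums of circuits property of a concrete binary matroid. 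Finally, your claim that ``routine simplifications'' make $M\setminus\{*\}$ coloop-free is not justified as stated: $\tau(\mathcal C)\ge 2$ rules out members of size $\le 1$, but does not on its own prevent an element of $M\setminus *$ from lying in no cycle avoiding $*$. The tangled-minor reduction in the paper (every element lies in a minimum cover of size two) is precisely what handles this; without it, you would need a separate argument.
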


We also show that $4$ cannot be replaced by $3$ in Conjecture~\ref{main-con}, even for binary clutters.  

\begin{PR}\label{pete}
There exists an ideal $3$-wise intersecting binary clutter.
\end{PR}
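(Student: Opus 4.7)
The plan is to exhibit an explicit ideal $3$-wise intersecting binary clutter. A natural candidate comes from the projective geometry $PG(3, 2)$: fix a distinguished point $p_0$ of $PG(3, 2)$, take the ground set $V$ to consist of the remaining $14$ points, and let $\mathcal{C}$ consist of the $8$ hyperplanes of $PG(3, 2)$ that avoid $p_0$ (each regarded as a $7$-element subset of $V$).

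I would first verify that $\mathcal{C}$ is binary. Every hyperplane of $PG(3,2)$ is the kernel of a nonzero linear functional $\phi$ on $\mathbb{F}_2^4$, and the members of $\mathcal{C}$ correspond to those $\phi$ with $\phi(p_0) = 1$. The symmetric difference of three members corresponds to the sum $\phi_1 + \phi_2 + \phi_3$, which still satisfies $(\phi_1 + \phi_2 + \phi_3)(p_0) = 1$: if nonzero, this sum yields another member of $\mathcal{C}$, and if zero, the symmetric difference equals all of $V$, which trivially contains a member. Hence $\mathcal{C}$ is binary. For the $3$-wise intersecting property, three hyperplanes meet in a projective subspace of dimension at least $4-3=1$ and so share at least one point, which lies in $V$ since none of the three hyperplanes contains $p_0$. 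A direct count shows that each element of $V$ lies in exactly $4$ of the $8$ members, so no element is in all of them; moreover, one can exhibit four functionals $\phi_i$ with $\phi_i(p_0)=1$ that are linearly independent in $(\mathbb{F}_2^4)^*$ to see that $\mathcal{C}$ is not $4$-wise intersecting.

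The main obstacle is verifying idealness. By Lehman's theorem, $\mathcal{C}$ is ideal if and only if its blocker $b(\mathcal{C})$ is ideal. The minimum covers of $\mathcal{C}$ are the $7$ pairs $L \setminus \{p_0\}$ where $L$ ranges over the lines of $PG(3, 2)$ through $p_0$, and these pairs form a perfect matching of $V$. A closer analysis identifies $b(\mathcal{C})$ with the port of the matroid $M(PG(3, 2))$ at $p_0$, whose additional members are the $28$ three-element covers arising from lines lying in planes through $p_0$. Idealness of this port clutter can then be deduced either by invoking Guenin's characterization of ideal binary clutters (verifying the absence of a Fano-line minor in the underlying binary matroid) or by a direct LP-duality argument exploiting the pair-partition structure on $V$.
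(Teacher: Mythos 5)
Your clutter $\mathcal{C}$---the $8$ hyperplanes of $PG(3,2)$ avoiding a fixed point $p_0$, on the $14$ remaining points---is indeed binary and $3$-wise (but not $4$-wise) intersecting; those verifications are fine. The flaw is the idealness claim: $\mathcal{C}$ is \emph{not} ideal, and the sketch you give for idealness (``invoke Guenin's characterization'' or ``a direct LP-duality argument'') is where the proposal breaks down.

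Here is why $\mathcal{C}$ fails to be ideal. The seven minimum covers $\{p,p+p_0\}$ (one per line through $p_0$) partition the ground set into pairs, each member meets each pair in exactly one element, and after relabelling one finds $\mathcal{C}=\cuboid{S}$ with $S=\cocycle{PG(2,2)}=\cocycle{F_7}=\cycle{F_7^\star}$. By Theorem~\ref{ideal-cuboids}, $\mathcal{C}$ is ideal if and only if $S$ is cube-ideal, and by Theorem~\ref{soc-cubeideal} that holds if and only if $F_7^\star$ has the sums of circuits property. But $F_7^\star$ does \emph{not}: it is internally $4$-connected, non-graphic, and not isomorphic to $F_7$ or $M(V_8)^\star$, so it is not produced by Theorem~\ref{seymour-soc}; in fact $F_7^\star$ is one of Seymour's three excluded minors for the sums of circuits property (alongside $R_{10}$ and $M^\star(K_5)$). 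One can also see this directly: $\sum_{j=1}^{7}x_j\le 4$ is a facet of $\conv(S)$ --- the seven nonzero points of $S$ are the $\{0,1\}$-complements of the rows of a Fano line--point incidence matrix $N$, and since $(J-N)(J-N)^\top = 2I+2J$ is nonsingular over $\mathbb{R}$ those seven points are affinely independent --- and this facet is not of the form $x_i\ge 0$, $x_i\le 1$, or a generalized set covering inequality. So $S$ is not cube-ideal and $\mathcal{C}$ is non-ideal. (As a side remark, $b(\mathcal{C})$ is not the port of $PG(3,2)$ at $p_0$ either: the $3$-element covers of $\mathcal{C}$ are the lines avoiding $p_0$, which sum to $\mathbf{0}$, whereas the $3$-element port members come from $4$-circuits through $p_0$ and sum to $p_0$.)

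The paper's construction is genuinely different: it takes $\cuboid{\cycle{G}}$ for $G$ the Petersen graph (the clutter $T_{30}$, $30$ elements). There, cube-idealness is automatic because cycle spaces of graphs are always cube-ideal (Theorem~\ref{graph-cubeideal}), and the $3$-wise intersecting property reduces to Remark~\ref{Tutte}, namely that the Petersen graph's edge set is not a union of two cycles. Your instinct to look at $PG$ cuboids is in the spirit of \S\ref{sec:tackle}, but $\cuboid{\cocycle{PG(\ell-1,2)}}$ is ideal only for $\ell\le 2$; for $\ell\ge 3$ the $F_7^\star$ obstruction kicks in, which is exactly why the paper must reach for the Petersen graph instead.
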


The example from Proposition~\ref{pete} comes from the Petersen graph, and also coincides with the clutter $T_{30}$ from \cite{Schrijver03}, \S79.3e. It has $30$ elements and is the smallest such example that we are aware of.

\paragraph{Another strengthening of the intersecting condition.} For an integer $k\geq 2$, a clutter is \emph{$k$-intersecting} if every pair of members have at least $k$ elements in common, yet no element belongs to all members. One may propose an analogue of Conjecture~\ref{main-con} for $k$-intersecting clutters: \begin{quote}
(?) \emph{For a sufficiently large integer $k$, every $k$-intersecting clutter is non-ideal.} (?)
\end{quote} This conjecture, however, is false. In fact, $k$-intersecting clutters are no closer to being non-ideal than intersecting clutters. Let us elaborate.

Let $\mathcal{C}$ be a clutter over ground set $V$. To {\it duplicate an element $u$} is to introduce a new element $\bar{u}$, and replace $\mathcal{C}$ by the clutter over ground set $V\cup \{\bar{u}\}$, whose members are $\{C:C\in \mathcal{C},u\notin C\}\cup \{C\cup \{\bar{u}\} : C\in \mathcal{C},u\in C\}$. A {\it duplication of $\mathcal{C}$} is a clutter obtained from $\mathcal{C}$ by repeatedly duplicating elements. We leave it as an exercise for the reader to check that a clutter is ideal if and only if some duplication of it is ideal.

Now, if a clutter is $k$-intersecting, then the clutter obtained from it after duplicating every element once, is $2k$-intersecting. As a result, starting from an ideal intersecting clutter, such as $Q_6$, one can construct a sequence of ideal $2^k$-intersecting clutters, $k= 0,1,2,\ldots$.
 
 \subsection{Paper outline}
 
 \begin{center}
\begin{tabular}{ | m{0.25cm} | m{14.5cm} |}
\hline
 \S\ref{sec:dyadic} & As further evidence for Conjecture~\ref{main-con}, we also show that it follows for $k=5$ from an unpublished conjecture by Seymour from 1975 that was documented in \cite{Schrijver03}, \S79.3e.  \\ 
 \hline
 \S\ref{sec:chromatic} & The theory of blocking clutters allows us to give a very attractive reformulation of Conjecture~\ref{main-con} in terms of the chromatic number of a clutter. \\  
 \hline
 \S\ref{sec:cuboids} & We show that a special class of clutters, called {\it cuboids}, sit at the heart of Conjecture~\ref{main-con}. Cuboids allow us to reformulate Conjecture~\ref{main-con} yet again, but this time in terms of set systems.    \\
 \hline
 \S\ref{sec:proof} & We prove Theorem~\ref{main} and Proposition~\ref{pete}. Besides the theory of cuboids, two other key ingredients of our proof of Theorem~\ref{main} are Jaeger's {\it $8$-flow Theorem}~\cite{Jaeger79} for graphs, and Seymour's characterization of the binary matroids with the {\it sums of circuits property}~\cite{Seymour81}. \\  
 \hline
 \S\ref{sec:tackle} & We propose a line of attack for tackling Conjecture~\ref{main-con} via a deep connection to {\it projective geometries over the two-element field}, objects that give rise to $k$-wise intersecting clutters.  \\  
 \hline
 \S\ref{sec:apps} & We discuss two applications of Theorem~\ref{main}, one to uniform cycle covers in graphs, another to quarter-integral packings of value two in ideal binary clutters. \\  
 \hline
\end{tabular}
\end{center}
 


\section{Dyadic fractional packings in ideal clutters}\label{sec:dyadic}

In this section, we prove that Conjecture~\ref{main-con} for $k=5$ follows from a conjecture of Seymour. 

Let $\mathcal{C}$ be a clutter over ground set $V$. A {\it fractional packing of $\mathcal{C}$} is a vector $y\in \mathbb{R}^{\mathcal{C}}_+$ such that for all $v\in V$, $\sum\left(y_C:C\in \mathcal{C},v\in C\right)\leq 1$; its {\it value} is ${\bf 1}^\top y$. Observe that a fractional packing is nothing but a feasible solution to (D), below.
\begin{align*}
(P)\quad&\begin{array}{ll} \min \quad  &{\bf 1}^\top x\\ \text{s.t.} & \sum \left(x_u:u\in C\right)\geq 1 \quad \forall C\in \mathcal{C} \\ & x\geq {\bf 0} \end{array}\\
(D)\quad&\begin{array}{ll} \max \quad  &{\bf 1}^\top y\\ \text{s.t.} & \sum \left(y_C:u\in C\in \mathcal{C}\right)\leq 1 \quad \forall u\in V\\ & y\geq {\bf 0}.\end{array}
\end{align*} As a result, Weak LP Duality implies that every fractional packing has value at most $\tau(\mathcal{C})$. Moreover, if $\mathcal{C}$ is an ideal clutter, then Strong LP Duality guarantees the existence of a fractional packing of value $\tau(\mathcal{C})$. 
Seymour conjectures the following.

\begin{CN}[Seymour 1975, see \cite{Schrijver03}, \S79.3e]\label{seymour-quarter}
Every ideal clutter $\mathcal{C}$ has a $\frac{1}{4}$-integral packing of value $\tau(\mathcal{C})$.
\end{CN}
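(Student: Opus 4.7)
The plan is to start from the fact that, since $\mathcal{C}$ is ideal, Strong LP Duality applied to $(P)$ and $(D)$ produces a fractional packing $y^\ast\in\mathbb{R}^{\mathcal{C}}_+$ with $\mathbf{1}^\top y^\ast=\tau(\mathcal{C})$. One may take $y^\ast$ to be an extreme point of the fractional packing polytope $\{y\in\mathbb{R}^{\mathcal{C}}_+:\sum_{C\ni v} y_C\le 1\ \forall v\in V\}$. Writing $S:=\{C:y^\ast_C>0\}$ and $T:=\{v:\sum_{C\ni v} y^\ast_C=1\}$, one has $|S|=|T|$ and the incidence submatrix $M_{T,S}$ is nonsingular, so by Cramer's rule the entries of $y^\ast|_S$ have denominators dividing $\det(M_{T,S})$. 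The goal is to modify $y^\ast$, while preserving its value, so that these denominators all divide $4$.

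The second step is induction on $|V|+|\mathcal{C}|$, exploiting the fact that both deletion and contraction preserve idealness. If some element $v\in V$ lies in no member of $S$, then $y^\ast$ restricts to an optimal fractional packing of $\mathcal{C}\setminus v$, which by induction admits a $\frac{1}{4}$-integral version, and this lifts back to $\mathcal{C}$. Similarly, if some member $C\in S$ has $y^\ast_C=1$, contracting the elements of $C$ reduces the instance. Thus the real work happens in a minimum counterexample, where every element of $V$ is critical and every positive weight is strictly fractional. A key sub-lemma to aim for is that in such a counterexample the clutter is combinatorially rigid---for example, that every minimum cover meets every $C\in S$ in exactly one element---placing it inside a highly structured family.

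The third step is to translate this rigidity into an explicit bound on $\det(M_{T,S})$, or into an exchange operation on $y^\ast$ (in the spirit of moving between alternative optimal extreme points) that strictly decreases the largest denominator whenever it exceeds $4$. A more modest route is to first establish the weaker statement that every ideal clutter admits a $\frac{1}{2^t}$-integral packing of value $\tau(\mathcal{C})$ for some fixed $t$, and then push $t$ down to $2$ by induction on $t$, using minor-closedness.

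The hard part is precisely the denominator bound. Extreme-point analysis alone gives no control on $\det(M_{T,S})$, and general ideal clutters are not known to admit a structural decomposition of the kind that Seymour's sums-of-circuits theorem provides for binary matroids. I therefore expect the main obstacle to lie in finding either a new structural theorem for ideal clutters, or a direct rounding argument on fractional packings that strictly reduces the worst denominator beyond $4$ while preserving the objective value. This is presumably why the conjecture has remained open since 1975 despite persistent effort, and why the rest of this section settles only for deducing consequences rather than attacking the conjecture head-on.
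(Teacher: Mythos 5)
This statement is Conjecture~\ref{seymour-quarter}, not a theorem; the paper explicitly records it as open (``Conjecture~\ref{seymour-quarter} is notoriously difficult as it is open even for binary clutters''), so there is no proof in the paper to compare your attempt against. You were right to recognize this and to stop short of claiming a proof. A small technical point on your setup: at an extreme point of the fractional packing polyhedron, the set $T$ of tight vertex constraints need not satisfy $|T|=|S|$; one can only say that the submatrix of the constraint matrix with rows $T$ and columns $S$ has full column rank $|S|$, and one then chooses a nonsingular $|S|\times|S|$ submatrix before applying Cramer's rule. But the real obstruction is exactly the one you identify: nothing bounds the resulting determinant, and even the fallback of establishing $\tfrac{1}{2^t}$-integrality for some fixed $t$ is open for general ideal clutters.

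It is worth comparing your plan to what the paper actually establishes. Your reduction to a minimal counterexample in which every element is critical is, in spirit, the reduction to a \emph{tangled} deletion minor used throughout \S\ref{sec:cuboids}, and your guessed rigidity (``every minimum cover meets every $C\in S$ in exactly one element'') is precisely the defining property of the \emph{core} of an ideal tangled clutter. However, the paper does not attempt a determinant or rounding argument at all. What it proves is Theorem~\ref{main-strong-2}: Conjecture~\ref{seymour-quarter} holds for ideal \emph{binary} clutters with $\tau(\mathcal{C})=2$, via a purely structural route. One passes to a tangled deletion minor, shows it is a duplication of a cuboid (Proposition~\ref{binaryclutter->cuboid}), invokes Seymour's decomposition of binary matroids with the sums of circuits property through Theorem~\ref{soc-3cyclecover} and Proposition~\ref{binaryspace->PG} to embed $PG(k-1,2)$ for some $k\le 3$ (Theorem~\ref{main-strong}), and then reads off an explicit $\tfrac{1}{4}$-integral packing of value two from the embedded projective geometry (Proposition~\ref{PG->integral}). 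This is consistent with your closing assessment that a new structural theorem, rather than a polyhedral rounding argument, is what is missing; in the binary case the needed structural theorem is Seymour's sums-of-circuits decomposition, and no analogue is known in general.
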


\noindent Here, a vector is {\it $\frac14$-integral} if every entry is $\frac14$-integral. 

Let us prove that Conjecture~\ref{seymour-quarter} implies Conjecture~\ref{main-con}. Let $I$ and $J$ be disjoint subsets of $V$. The {\it minor} $\mathcal{C}\setminus I/J$ obtained after {\it deleting $I$} and {\it contracting $J$} is the clutter over ground set $V-(I\cup J)$ whose members are the minimal sets in $\{C-J:C\in \mathcal{C},C\cap I=\emptyset\}$. If $J=\emptyset$, then $\mathcal{C}\setminus I/J=\mathcal{C}\setminus I$ is called a {\it deletion minor}. If a clutter is ideal, then so is every minor of it~\cite{Seymour77}.

\begin{PR}\label{quarter->intersecting}
If Conjecture~\ref{seymour-quarter} is true, then Conjecture~\ref{main-con} is true for $k=5$.
\end{PR}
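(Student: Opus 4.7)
The plan is to assume Conjecture~\ref{seymour-quarter} and suppose for a contradiction that some ideal clutter $\mathcal{C}$ is $5$-wise intersecting. Since $5$-wise intersecting implies intersecting, no element of the ground set belongs to every member, so $\tau(\mathcal{C}) \ge 2$. Invoking Conjecture~\ref{seymour-quarter}, I would pull out a $\tfrac14$-integral packing $y \in \mathbb{R}^{\mathcal{C}}_+$ of value exactly $\tau(\mathcal{C}) \ge 2$; in particular every nonzero coordinate of $y$ is at least $\tfrac14$.

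The argument then splits on the size of the support $\mathcal{S} := \{C \in \mathcal{C} : y_C > 0\}$. In the ``large support'' case $|\mathcal{S}| \ge 5$, I would pick any five distinct members $C_1, \ldots, C_5 \in \mathcal{S}$ and apply the $5$-wise intersecting hypothesis to obtain a common element $v \in C_1 \cap \cdots \cap C_5$. The packing constraint at $v$ then reads
\[
1 \;\ge\; \sum_{C \ni v} y_C \;\ge\; \sum_{i=1}^{5} y_{C_i} \;\ge\; 5 \cdot \tfrac14 \;=\; \tfrac54,
\]
a contradiction. In the ``small support'' case $|\mathcal{S}| \le 4$, note that $\mathcal{C}$ is $j$-wise intersecting for every $j \le 5$, so all of the (at most four) members of $\mathcal{S}$ share some common element $v^{\ast}$. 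The packing constraint at $v^{\ast}$ now yields
\[
1 \;\ge\; \sum_{C \ni v^{\ast}} y_C \;\ge\; \sum_{C \in \mathcal{S}} y_C \;=\; {\bf 1}^\top y \;=\; \tau(\mathcal{C}) \;\ge\; 2,
\]
which is again a contradiction.

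Granted Conjecture~\ref{seymour-quarter}, there really is no hard step: everything reduces to reading off the packing inequality at a single common element. What is worth flagging is the role of $k=5$, which is exactly what balances the two cases. The strict inequality $5 \cdot \tfrac14 > 1$ in the large-support case is forced by the denominator $4$ appearing in Seymour's conjecture, while the complementary bound $|\mathcal{S}| \le 4$ is small enough that $4$-wise intersectedness still delivers a common point of the whole support. Thus the denominator $4$ in Conjecture~\ref{seymour-quarter} pairs naturally with $k=5$ in Conjecture~\ref{main-con}, and pushing $k$ down to $4$ by this route would require a strictly stronger fractional-packing hypothesis (for instance, $\tfrac13$-integral packings of value $\tau$).
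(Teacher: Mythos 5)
Your proof is correct and rests on the same core idea as the paper's: read off the packing inequality at a hypothetical common element and use $\tfrac14$-integrality to force a contradiction. The organization differs slightly. Rather than splitting on the support size $|\mathcal{S}|$, the paper selects a \emph{minimal} subfamily $\mathcal{C}' \subseteq \{C \in \mathcal{C} : y_C > 0\}$ with $\sum_{C\in\mathcal{C}'} y_C > 1$; by minimality and $\tfrac14$-integrality, removing any one member drops the sum to at most $1$, so $|\mathcal{C}'| \le 5$, and $\mathcal{C}'$ cannot have a common element since otherwise the packing constraint at that element would be violated. This yields a direct exhibition of at most five members without a common element, subsuming both of your cases in a single stroke and avoiding the appeal to ``$j$-wise intersecting for all $j\le 5$.'' Both routes are fine; yours makes the interaction between the denominator $4$ and $k=5$ especially transparent, which the paper also flags immediately after the proof (noting that only a fractional packing of value $\ge 2$ with all nonzero entries $\ge \tfrac14$ is actually needed).
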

\begin{proof}
Assume Conjecture~\ref{seymour-quarter} is true. Let $\mathcal{C}$ be an ideal clutter with $\tau(\mathcal{C})\geq 2$. We need to exhibit at most five members without a common element. By Conjecture~\ref{seymour-quarter}, $\mathcal{C}$ has a $\frac{1}{4}$-integral packing $y\in \mathbb{R}^{\mathcal{C}}_+$ of value $\tau(\mathcal{C})\geq 2$. 
Pick a minimal subset $\mathcal{C}'\subseteq \{C\in \mathcal{C}:y_C>0\}$ such that $\sum_{C\in \mathcal{C}'} y_C>1$. Clearly, $|\mathcal{C}'|\leq 5$. Moreover, the members of $\mathcal{C}'$ do not have a common element; for if $u$ was a common element to the members of $\mathcal{C}'$, then $$1<\sum_{C\in \mathcal{C}'} y_C = \sum_{u\in C\in \mathcal{C}'} y_C \leq \sum_{u\in C\in \mathcal{C}} y_C\leq 1$$ where the last inequality holds because $y$ is a fractional packing, a contradiction. As a result, $\mathcal{C}'$, and therefore $\mathcal{C}$, has a subset of at most five members without a common element, as required. 
\end{proof}

The careful reader will notice that we proved something stronger above: Conjecture~\ref{main-con} holds for $k=5$ as long as there exists a fractional packing of value at least two, where every nonzero fraction assigned is $\geq\frac{1}{4}$.

Conjecture~\ref{seymour-quarter} is notoriously difficult as it is open even for binary clutters. In fact, it remains open for the clutter of {\it postman sets} of a graph~(\cite{Cornuejols01}, Conjecture~2.15). Proposition~\ref{quarter->intersecting} is particularly interesting as it suggests a clue for proving Conjecture~\ref{seymour-quarter}, by tackling Conjecture~\ref{main-con} first. As we see in \S\ref{sec:apps}, Theorem~\ref{main} implies Conjecture~\ref{seymour-quarter} for ideal binary clutters with covering number two.


\section{The chromatic number of a clutter}\label{sec:chromatic}

In this section, we give a very attractive reformulation of Conjecture~\ref{main-con}.

Let $\mathcal{C}$ be a clutter over ground set $V$ where every member has cardinality at least two. For an integer $k\geq 2$, a {\it proper $k$-colouring of $\mathcal{C}$} is an assignment of $k$ colours to the elements $V$ such that no monochromatic member exists, that is, it is a partition of $V$ into $k$ parts none of which contains a member.

\begin{DE} The \emph{chromatic number of $\mathcal{C}$}, denoted $\chi(\mathcal{C})$, is the smallest integer $k$ such that a proper $k$-colouring exists.\end{DE}

\noindent Since every member of $\mathcal{C}$ has cardinality at least two, $\chi(\mathcal{C})$ is well-defined.

The {\it incidence matrix of $\mathcal{C}$}, denoted $M(\mathcal{C})$, is the matrix whose columns are indexed by the elements and whose rows are the incidence vectors of the members. We say that $\mathcal{C}$ is a {\it balanced} clutter if $M(\mathcal{C})$ does not have the adjacency matrix of an odd circuit as a submatrix. Balanced clutters form an important subclass of ideal clutters. If $\mathcal{C}$ is a balanced clutter where every member has cardinality at least two, then $\chi(\mathcal{C})\leq 2$ (and therefore $=2$). Balancedness, as well as the statements just made, are due to Claude Berge~\cite{Berge72}.

Given Berge's result in our context, one may wonder whether a universal upper bound exists for the chromatic number of an ideal clutter? We conjecture the following extension.

\begin{CN}\label{chromatic-con}
There exists an integer $k\geq 4$ such that every ideal clutter without a member of cardinality at most one has chromatic number at most $k$.
\end{CN}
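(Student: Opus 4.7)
The plan is to attack Conjecture~\ref{chromatic-con} by reducing it to Conjecture~\ref{main-con} through blocker duality; this makes the binary subcase unconditional via Theorem~\ref{main} and lets the full statement inherit any future progress on the main conjecture.

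First I would reformulate the chromatic number via the blocker $b(\mathcal{C})$, whose members are the minimal covers of $\mathcal{C}$. A partition $V=V_1\sqcup\cdots\sqcup V_k$ is a proper $k$-colouring precisely when each $V_i$ contains no member of $\mathcal{C}$, equivalently when each $B_i:=V\setminus V_i$ is a cover; the partition condition translates into $B_1\cap\cdots\cap B_k=\emptyset$. Shrinking each $B_i$ to a minimal cover preserves the empty intersection, so $\chi(\mathcal{C})\leq k$ iff $b(\mathcal{C})$ contains $k$ members whose intersection is empty. I would then translate the two hypotheses of the statement. Since $\mathcal{C}$ has no member of cardinality at most one, no singleton is a cover, so no element lies in every minimal cover; equivalently $\bigcap_{B\in b(\mathcal{C})}B=\emptyset$. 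By the classical blocker duality of Lehman, $\mathcal{C}$ ideal implies $b(\mathcal{C})$ ideal. Combining these: if $\chi(\mathcal{C})>k$ then $b(\mathcal{C})$ is an ideal clutter in which every $k$ members share a common element yet no element is common to all, i.e.\ $b(\mathcal{C})$ is an ideal $k$-wise intersecting clutter. Conjecture~\ref{main-con} forbids this for some $k\geq 4$; that value of $k$ then forces $\chi(\mathcal{C})\leq k$, proving Conjecture~\ref{chromatic-con} with the same bound.

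With the reduction in hand, I would cash in the known partial results to make concrete progress on the stated conjecture. Because $\mathcal{C}$ is binary iff $b(\mathcal{C})$ is binary, Theorem~\ref{main} gives the unconditional bound $\chi(\mathcal{C})\leq 4$ for every ideal binary clutter without a member of cardinality at most one, supplying concrete evidence at the strongest allowable value $k=4$. A conditional general-case bound follows analogously from \S\ref{sec:dyadic}: Seymour's Conjecture~\ref{seymour-quarter}, applied to the ideal blocker $b(\mathcal{C})$ of covering number at least two, produces a $\tfrac14$-integral packing of value $\geq 2$, the minimal support of which exhibits at most five minimal covers of $\mathcal{C}$ with empty intersection, hence $k=5$.

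The main obstacle is that the reduction funnels the entire difficulty of the chromatic conjecture into Conjecture~\ref{main-con}, and no route to the general non-binary case appears to bypass it. Deeper structural input---such as the projective-geometry machinery over $\mathrm{GF}(2)$ outlined in \S\ref{sec:tackle}, the source of many $k$-wise intersecting clutters---seems necessary to push the unconditional bound beyond the binary class.
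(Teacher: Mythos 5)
Your proposal follows essentially the same route as the paper: the statement is a conjecture, and the paper likewise does not prove it outright but establishes, via blocker duality (its Proposition~\ref{kwise<->colouring}, showing $\chi(\mathcal{C})\leq k$ iff $b(\mathcal{C})$ is not $k$-wise intersecting), that it is equivalent to Conjecture~\ref{main-con}, with the binary case then unconditional by Theorem~\ref{main} and the $k=5$ case conditional on Seymour's Conjecture~\ref{seymour-quarter}. The only detail you gloss over is that $k$ minimal covers with empty intersection give overlapping complements, which must be disjointified to obtain a partition (the paper does this explicitly with $A_1:=V-B_1$, $A_i:=(B_1\cap\cdots\cap B_{i-1})-B_i$), a one-line fix.
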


We prove that this conjecture is, in fact, equivalent to Conjecture~\ref{main-con}! To show this, we need to introduce an important clutter notion.

Let $\mathcal{C}$ be a clutter over ground set $V$. The family of minimal covers of $\mathcal{C}$ forms another clutter over the ground set $V$, called the {\it blocker of $\mathcal{C}$} and denoted $b(\mathcal{C})$. It is well-known that $b(b(\mathcal{C}))=\mathcal{C}$~\cite{Isbell58,Edmonds70}. 

\begin{PR}\label{kwise<->colouring}
Let $\mathcal{C}$ be a clutter over ground set $V$ where every member has cardinality at least two. Let $k\geq 2$ be an integer. Then $\mathcal{C}$ has chromatic number at most $k$ if, and only if, $b(\mathcal{C})$ is \emph{not} $k$-wise intersecting.
\end{PR}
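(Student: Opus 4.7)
The plan is to establish a natural correspondence between proper $k$-colourings of $\mathcal{C}$ and collections of at most $k$ minimal covers with empty intersection; both directions of the equivalence then fall out.

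For the forward direction I would start with a proper $k$-colouring $V=V_1\cup\cdots\cup V_k$. Since no $V_i$ contains a member, the complement $V\setminus V_i$ meets every member of $\mathcal{C}$ and is therefore a cover. Moreover $\bigcap_{i=1}^k(V\setminus V_i)=V\setminus\bigcup_i V_i=\emptyset$. Shrinking each $V\setminus V_i$ to a minimal cover $B_i\subseteq V\setminus V_i$ preserves emptiness of the intersection, so the $k$ members $B_1,\ldots,B_k$ of $b(\mathcal{C})$ witness that $b(\mathcal{C})$ is not $k$-wise intersecting.

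For the reverse direction I would suppose that $b(\mathcal{C})$ is not $k$-wise intersecting. The definition is a conjunction of two clauses, so its negation is a disjunction. In the case that some element $v$ belongs to every member of $b(\mathcal{C})$, the singleton $\{v\}$ is a (trivially minimal) cover of $b(\mathcal{C})$, so by $b(b(\mathcal{C}))=\mathcal{C}$ one obtains $\{v\}\in\mathcal{C}$, contradicting the cardinality hypothesis. Therefore some $r\leq k$ minimal covers $B_1,\ldots,B_r$ have $\bigcap_{i=1}^r B_i=\emptyset$. I would then build the colouring by choosing, for each $v\in V$, any index $i(v)\in\{1,\ldots,r\}$ with $v\notin B_{i(v)}$ (such an index exists precisely because of the empty intersection) and setting $V_i:=\{v\in V:i(v)=i\}$, padding with empty parts if $r<k$. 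Since $B_i$ is a cover, no member of $\mathcal{C}$ is disjoint from $B_i$, hence no member is contained in $V\setminus B_i\supseteq V_i$; this is a proper $k$-colouring.

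The only point requiring care is the handling of the ``no element in every member'' clause in the definition of $k$-wise intersecting, which is disposed of via the blocker involution $b(b(\mathcal{C}))=\mathcal{C}$. The cardinality hypothesis on $\mathcal{C}$ is used precisely here, and is indispensable: any singleton member of $\mathcal{C}$ would lie in every minimal cover of $\mathcal{C}$, so without this hypothesis the equivalence would fail.
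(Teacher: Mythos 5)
Your proof is correct and follows essentially the same route as the paper: the forward direction uses the complements of colour classes to produce minimal covers with empty intersection, and the reverse direction partitions $V$ according to which of the chosen covers each element avoids. The only cosmetic difference is that the paper realizes the colour classes explicitly as nested set differences $A_1=V-B_1$, $A_i=(B_1\cap\cdots\cap B_{i-1})-B_i$, which amounts to choosing $i(v)$ as the smallest index with $v\notin B_i$ in your choice-function formulation, and it disposes of the ``common element'' clause by the equivalent observation $\tau(b(\mathcal{C}))\ge 2$ rather than your singleton-in-$\mathcal{C}$ contradiction.
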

\begin{proof}
$(\Rightarrow)$ 
Let $A_1,\ldots,A_k$ be a proper $k$-colouring of $\mathcal{C}$, that is, $A_1,\ldots,A_k$ form a partition of $V$ such that no $A_i,i\in [k]:=\{1,\ldots,k\}$ contains a member of $\mathcal{C}$. Then for each $i\in [k]$, $V-A_i$ is a cover of $\mathcal{C}$, so there exists $B_i\in b(\mathcal{C})$ disjoint from $A_i$. Since $A_1\cup \cdots\cup A_k=V$, it follows that $B_1\cap \cdots\cap B_k=\emptyset$, certifying that $b(\mathcal{C})$ is not $k$-wise intersecting.

$(\Leftarrow)$
Assume that $b(\mathcal{C})$ is not $k$-wise intersecting. Since every member of $\mathcal{C}$ has cardinality at least two, $\tau(b(\mathcal{C}))\geq 2$, so there exist $B_1,\ldots,B_k\in b(\mathcal{C})$ such that $B_1\cap \cdots\cap B_k=\emptyset$. Let $A_1:=V-B_1$ and for $i=2,\ldots,k$ let
$$A_i:=(B_1\cap \cdots\cap B_{i-1}) - B_i.$$ By definition, $A_1,\ldots,A_k$ are pairwise disjoint. As $B_1\cap \cdots \cap B_k=\emptyset$, it follows that $A_1,\ldots,A_k$ partition $V$. As no $A_i$ is a cover of $b(\mathcal{C})$, it follows that $A_1,\ldots,A_k$ is a proper $k$-colouring of $b(b(\mathcal{C}))=\mathcal{C}$, so $\mathcal{C}$ has chromatic number at most $k$.
\end{proof}

A clutter is ideal if, and only if, its blocker is ideal~\cite{Fulkerson70,Lehman79,Cornuejols01}. We are now ready to prove the following.

\begin{PR}
Conjecture~\ref{main-con} for $k$ is equivalent to Conjecture~\ref{chromatic-con} for the same~$k$.
\end{PR}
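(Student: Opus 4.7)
The plan is to deduce the equivalence directly from Proposition~\ref{kwise<->colouring} combined with the theorem that a clutter is ideal if and only if its blocker is ideal. Both directions reduce to a contrapositive argument in which we transfer the hypothesis across the blocker operation, exploiting the involution $b(b(\mathcal{C}))=\mathcal{C}$.

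For the implication that Conjecture~\ref{main-con} for $k$ implies Conjecture~\ref{chromatic-con} for $k$, I would start with an arbitrary ideal clutter $\mathcal{C}$ in which every member has cardinality at least two. First, $b(\mathcal{C})$ is also ideal. Next, since the minimal covers of $b(\mathcal{C})$ are precisely the members of $\mathcal{C}$, the hypothesis on member sizes forces $\tau(b(\mathcal{C}))\geq 2$; in particular, no element of the ground set is contained in every member of $b(\mathcal{C})$. Suppose for contradiction that $\chi(\mathcal{C})>k$. By Proposition~\ref{kwise<->colouring}, $b(\mathcal{C})$ is then $k$-wise intersecting (the ``no common element'' half is supplied by $\tau(b(\mathcal{C}))\geq 2$, the ``every $k$ members meet'' half by the failure of a proper $k$-colouring). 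Conjecture~\ref{main-con} would then declare $b(\mathcal{C})$ non-ideal, contradicting what we just established.

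For the converse, let $\mathcal{C}$ be a $k$-wise intersecting clutter and assume for contradiction that $\mathcal{C}$ is ideal. Then $b(\mathcal{C})$ is ideal, and since the ``no common element'' half of the $k$-wise intersecting definition says $\tau(\mathcal{C})\geq 2$, every member of $b(\mathcal{C})$ has cardinality at least two. Applying Conjecture~\ref{chromatic-con} to $b(\mathcal{C})$ gives $\chi(b(\mathcal{C}))\leq k$. Now Proposition~\ref{kwise<->colouring} applied to $b(\mathcal{C})$ yields that $b(b(\mathcal{C}))=\mathcal{C}$ is \emph{not} $k$-wise intersecting, a contradiction.

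The only real care needed is to verify that the hypothesis ``no member of cardinality at most one'' in Conjecture~\ref{chromatic-con} matches, under blocking, the condition $\tau\geq 2$ that is built into the definition of $k$-wise intersecting; this is precisely the hinge that makes Proposition~\ref{kwise<->colouring} applicable in both directions. I do not anticipate any substantive obstacle: the nontrivial content has already been packaged into Proposition~\ref{kwise<->colouring} and the Fulkerson--Lehman blocker-idealness theorem, so the remaining step is simply bookkeeping of these two facts.
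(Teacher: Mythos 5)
Your proof is correct and follows essentially the same approach as the paper: both directions are obtained by combining Proposition~\ref{kwise<->colouring} with the theorem that a clutter is ideal if and only if its blocker is, using $b(b(\mathcal{C}))=\mathcal{C}$. The paper phrases the forward direction directly rather than by contradiction and leaves the converse as ``similar,'' but the substance is identical.
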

\begin{proof}
Assume that Conjecture~\ref{main-con} holds for $k$, that is, every $k$-wise intersecting clutter is non-ideal. Let $\mathcal{C}$ be an ideal clutter where every member has cardinality at least two. Then $b(\mathcal{C})$ is an ideal clutter, so it is not $k$-wise intersecting by our hypothesis, so $\chi(\mathcal{C})\leq k$ by Proposition~\ref{kwise<->colouring}. Thus, Conjecture~\ref{chromatic-con} holds for the same $k$. The other direction is similar.
\end{proof}

A result that must be noted in the context of Conjecture~\ref{chromatic-con} is the following: For every ideal clutter $\mathcal{C}$ where every member and also every cover has cardinality at least two, either $\chi(\mathcal{C})\leq 2$ or $\chi(b(\mathcal{C}))\leq 2$ (or both). This result was proved recently, and the proof relied on {\it Gauge Duality} in {\it Quadratic Programming}~\cite{Abdi-ISBC}.

As a final note, it is well-known that a clutter is binary if, and only if, the blocker is binary~\cite{Lehman64}. As a consequence, Theorem~\ref{main} and Proposition~\ref{pete} (which have not yet been proved) are equivalent, respectively, to the following statements.

\begin{THM}
Every ideal binary clutter without a member of cardinality at most one has chromatic number at most $4$.
\end{THM}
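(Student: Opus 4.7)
The plan is to derive this theorem as a formal corollary of Theorem~\ref{main}, invoking the blocker-theoretic translation developed in this section. Let $\mathcal{C}$ be an ideal binary clutter every member of which has cardinality at least two; the goal is to show $\chi(\mathcal{C})\le 4$.

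First I would pass to the blocker $b(\mathcal{C})$ and collect three facts. Ideality is preserved under blocking (cited above), so $b(\mathcal{C})$ is ideal. Binariness is likewise preserved under blocking (also cited above), so $b(\mathcal{C})$ is binary. Finally, the minimal covers of $b(\mathcal{C})$ are precisely the members of $b(b(\mathcal{C}))=\mathcal{C}$, so the hypothesis that every member of $\mathcal{C}$ has size at least two gives $\tau(b(\mathcal{C}))\ge 2$; in particular, no single element lies in every member of $b(\mathcal{C})$.

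Next I would apply Theorem~\ref{main} to $b(\mathcal{C})$: since $b(\mathcal{C})$ is binary and ideal, it cannot be $4$-wise intersecting. The ``no universal element'' half of the definition of $4$-wise intersecting is already secured by $\tau(b(\mathcal{C}))\ge 2$, so the failure must occur in the other half, yielding members $B_1,B_2,B_3,B_4\in b(\mathcal{C})$ with $B_1\cap B_2\cap B_3\cap B_4=\emptyset$. To finish, I would feed this into Proposition~\ref{kwise<->colouring}, whose hypothesis is exactly that every member of $\mathcal{C}$ has cardinality at least two: the proposition converts non-$4$-wise-intersectingness of $b(\mathcal{C})$ into $\chi(\mathcal{C})\le 4$, which is the desired conclusion.

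The argument above is purely bookkeeping; the genuine obstacle is Theorem~\ref{main} itself, whose proof is deferred to \S\ref{sec:proof} and draws on Jaeger's $8$-flow theorem, Seymour's characterization of the binary matroids with the sums of circuits property, and the theory of cuboids developed in \S\ref{sec:cuboids}. Once Theorem~\ref{main} is in hand, the blocker translation is entirely mechanical, which is why the two statements were advertised as equivalent in the paragraph just above.
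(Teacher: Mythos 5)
Your proof is correct and matches the paper's route exactly: the paper derives this theorem from Theorem~\ref{main} by observing that blocking preserves both idealness and binariness, and then invoking the equivalence between $k$-wise intersectingness of $b(\mathcal{C})$ and $\chi(\mathcal{C})\le k$ established in Proposition~\ref{kwise<->colouring}. You have simply written out the ``mechanical'' translation that the paper leaves implicit when it calls the two statements equivalent.
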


\begin{PR}
There exists an ideal binary clutter whose chromatic number is equal to $4$.
\end{PR}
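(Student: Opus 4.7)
The plan is to exhibit an explicit ideal binary clutter $\mathcal{C}$ with $\chi(\mathcal{C})=4$. Since a clutter is binary (resp.\ ideal) iff its blocker is binary (resp.\ ideal), and since by Proposition~\ref{kwise<->colouring} one has $\chi(\mathcal{C})\geq 4$ iff $b(\mathcal{C})$ is $3$-wise intersecting, it is equivalent to produce an ideal binary clutter $\mathcal{D}$ that is $3$-wise intersecting but not $4$-wise intersecting, and then set $\mathcal{C}:=b(\mathcal{D})$. I will work with $\mathcal{D}$, as the $3$-wise intersecting formulation is the natural one from the point of view of the Petersen graph. The intended $\mathcal{D}$ is the $30$-element clutter $T_{30}$ from~\cite{Schrijver03}, \S79.3e, attached to the Petersen graph $G$.

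The first step is to describe $\mathcal{D}$ explicitly, with members given by a transparent combinatorial rule coming from $T$-joins or $T$-cuts of a graph naturally derived from $G$ (for example, a suitable doubling of its edges). The second step is binariness, which should be immediate because the members of $\mathcal{D}$ lie in the cycle (or cocycle) space of a binary matroid associated with $G$; the symmetric difference of any odd number of members therefore remains in that subspace and so contains a member. The third step is idealness, which I would establish either by a direct polyhedral argument exploiting the high symmetry of $G$, or by invoking Seymour's excluded-minor characterisation of idealness for binary clutters and checking that none of the known obstructions appears as a minor of $\mathcal{D}$.

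The fourth and main step is to verify that $\mathcal{D}$ is $3$-wise intersecting. The fact that no element belongs to all members is arranged by the construction; the content is in the triple-intersection statement. The cleanest route is through the ($\Leftarrow$) direction of Proposition~\ref{kwise<->colouring}: I would show that the $30$-element ground set admits no partition into three parts, none of which contains a member of $b(\mathcal{D})$. Such a partition would translate into a structural object on $G$ ruled out by the classical fact that the chromatic index of $G$ equals $4$, i.e.\ that its $15$ edges cannot be partitioned into three perfect matchings. This non-$3$-edge-colourability of the Petersen graph is precisely the obstruction that the construction is engineered to encode, and I expect this to be the main obstacle, requiring use of the vertex- and edge-transitive automorphism group of $G$ (which acts on the $30$-element ground set of $\mathcal{D}$) to reduce the verification to a small number of orbit representatives. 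Dually, showing that $\mathcal{D}$ is not $4$-wise intersecting amounts to exhibiting four members with empty common intersection, which in turn corresponds to the easy fact that $G$ itself admits a proper $4$-edge-colouring.
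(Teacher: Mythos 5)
Your high-level plan matches the paper's: take $\mathcal{D}:=T_{30}$, the cuboid of the cycle space of the Petersen graph, show it is an ideal binary clutter that is $3$-wise but not $4$-wise intersecting, and set $\mathcal{C}:=b(\mathcal{D})$; the blocker-based reduction via Proposition~\ref{kwise<->colouring} is correct. However, your idealness step has a genuine gap. You propose to ``invoke Seymour's excluded-minor characterisation of idealness for binary clutters and check that none of the known obstructions appears as a minor,'' but no such proved characterisation exists: this is Seymour's \emph{Flowing Conjecture}, which is open. The only direction available as a theorem is that having one of the conjectured obstructions as a minor implies non-idealness, which is useless for proving idealness. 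The alternative you mention, ``a direct polyhedral argument exploiting the high symmetry of $G$,'' is not a proof plan. The paper gets idealness from two general results instead: the cycle space of every graph is a cube-ideal set (Theorem~\ref{graph-cubeideal}), and $\cuboid{S}$ is ideal iff $S$ is cube-ideal (Theorem~\ref{ideal-cuboids}). If you prefer the $T$-join description of $T_{30}$ that you allude to, you would need to explicitly invoke the Edmonds--Johnson theory that minimal $T$-joins form an ideal clutter.

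The other two verification steps are workable but you choose a harder route than necessary and one claim is imprecise. For $3$-wise intersecting, you plan to rule out $3$-partitions of the $30$-element ground set using automorphism orbits; but Remarks~\ref{kwise-kwise} and~\ref{graph-cyclespace} translate the condition exactly into: Petersen is bridgeless and no two cycles of Petersen cover its edge set. The latter is Remark~\ref{Tutte}, whose one-line proof is precisely the reduction to non-$3$-edge-colourability you anticipate, applied to two cycles of the $15$-edge Petersen graph rather than to partitions of $30$ elements. For ``not $4$-wise intersecting,'' your reduction to ``$G$ admits a proper $4$-edge-colouring'' is not the right invariant: what Remark~\ref{graph-cyclespace}~(2) requires is that Petersen has three cycles whose union is the whole edge set, and a $4$-edge-colouring of a cubic graph does not directly give this. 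You should either exhibit a $3$-cycle cover of Petersen, invoke Jaeger's $8$-Flow Theorem~\ref{jaeger}, or simply note that $T_{30}$ is ideal and binary, so Theorem~\ref{main} already forbids it from being $4$-wise intersecting.
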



\section{Cuboids}\label{sec:cuboids}

In this section, we reformulate Conjecture~\ref{main-con} in terms of cuboids. The concept of cuboids is key for the proof of Theorem~\ref{main}.

Let $n \geq 1$ be an integer, and let $S\subseteq \{0,1\}^n$. The {\it cuboid of $S$}, denoted $\cuboid{S}$, is the clutter over ground set $[2n]$ whose members have incidence vectors $(p_1,1-p_1,\ldots,p_n,1-p_n)$ over all $(p_1, \dots, p_n) \in S$.  We say that a clutter is a {\it cuboid} if, after a relabeling of its ground set, is equal to $\cuboid{S}$, for some $S$.

 Observe that for each $C\in \cuboid{S}$, $|C\cap \{2i-1,2i\}|= 1$ for all $i\in [n]$. In particular, every member of $\cuboid{S}$ has size $n$ (hence $\cuboid{S}$ is a clutter) and $\tau(\cuboid{S}) \leq 2$. Cuboids were introduced in~\cite{Abdi-mnp} and further studied in~\cite{Abdi-cuboids}. 
 
 We now describe what it means for $\cuboid{S}$ to be $k$-wise intersecting. We say that the points in $S$ {\it agree on a coordinate} if $S\subseteq \{x : x_i=a\}$ for some coordinate $i\in [n]$ and some $a\in \{0,1\}$.

\begin{RE}\label{kwise-kwise}
Let $S\subseteq \{0,1\}^n$. Then $\cuboid{S}$ is a $k$-wise intersecting clutter if, and only if, the points in $S$ do not agree on a coordinate yet every subset of at most $k$ points do.
\end{RE}
\begin{proof}
$(\Rightarrow)$ Assume that $\cuboid{S}$ is $k$-wise intersecting. 

If the points in $S$ agree on a coordinate, say $S\subseteq \{x : x_i=a\}$, then the members of $\cuboid{S}$ would have an element in common, either $2i-1$ (if $a=1$) or $2i$ (if $a=0$), which is not possible because the members of $\cuboid{S}$ do not have a common element. Thus, the points in $S$ do not agree on a coordinate.

If a subset $S'$ of at most $k$ points in $S$ do agree on a coordinate, say $S'\subseteq \{x : x_i=a\}$, then the members of $\cuboid{S}'\subseteq \cuboid{S}$ would have an element in common, leading to at most $k$ members in $\cuboid{S}$ that have a common element, which again is not possible because $\cuboid{S}$ is $k$-wise intersecting. Thus, every subset of $S$ of at most $k$ points agree on a coordinate.

$(\Leftarrow)$ is similar and left to the reader.
\end{proof}

Next, we describe what it means for $\cuboid{S}$ to be ideal. Let $\conv(S)$ denote the convex hull of $S$. An inequality of the form $\sum_{i\in I} x_i + \sum_{j\in J} (1-x_j)\geq 1$, for some disjoint $I,J\subseteq [n]$, is called a {\it generalized set covering inequality}~\cite{Cornuejols01}. The set $S$ is {\it cube-ideal} if every facet of $\conv(S)$ is defined by $x_i\geq 0$, $x_i\leq 1$, or a generalized set covering inequality~\cite{Abdi-cuboids}.

\begin{THM}[\cite{Abdi-cuboids}]\label{ideal-cuboids}
Let $S\subseteq \{0,1\}^n$. Then $\cuboid{S}$ is an ideal clutter if, and only if, $S$ is a cube-ideal set.
\end{THM}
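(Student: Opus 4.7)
The plan is to pass to the blocker $b(\cuboid{S})$. Since a clutter is ideal if and only if its blocker is, and since the minimal covers of $b(\cuboid{S})$ are the members of $\cuboid{S}$, the standard dominant-description of idealness yields the equivalence
\[
\cuboid{S}\text{ is ideal}\iff Q(b(\cuboid{S}))=\phi(\conv(S))+\mathbb{R}_{+}^{2n},
\]
where $\phi\colon\mathbb{R}^n\to T:=\{y\in\mathbb{R}^{2n}:y_{2i-1}+y_{2i}=1\ \forall i\}$ is the affine isomorphism $\phi(p)=(p_1,1-p_1,\ldots,p_n,1-p_n)$ bijecting $S$ with the incidence vectors of members of $\cuboid{S}$. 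I would also exploit the cover/inequality dictionary: a transversal subset $B_{I,J}:=\{2i-1:i\in I\}\cup\{2j:j\in J\}$ (with $I,J\subseteq[n]$ disjoint) is a cover of $\cuboid{S}$ if and only if $\sum_{i\in I}x_i+\sum_{j\in J}(1-x_j)\geq 1$ is a valid generalized set covering inequality for $S$; both conditions forbid any $p\in S$ from having $p_i=0$ for all $i\in I$ and $p_j=1$ for all $j\in J$.

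To handle $(\Rightarrow)$, I would intersect the displayed identity with $T$. Since adding a nonzero nonnegative vector to a point of $T$ leaves $T$, one gets $\phi(\conv(S))=Q(b(\cuboid{S}))\cap T$. Parametrizing $T$ by $z_i:=y_{2i-1}$, the defining inequalities of $Q(b(\cuboid{S}))$ translate as follows: non-negativity $y_{2i-1}\geq 0$ and $y_{2i}\geq 0$ become the box inequalities $z_i\geq 0$ and $z_i\leq 1$; transversal minimal-cover constraints become generalized set covering inequalities via the dictionary; and non-transversal minimal-cover constraints, which (by inclusion-minimality) must equal $\{2i-1,2i\}$ for some $i$, reduce to $1\geq 1$ on $T$. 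This would give a description of $\conv(S)$ using only box and generalized set covering inequalities, proving that $S$ is cube-ideal.

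To handle $(\Leftarrow)$, I would show the nontrivial inclusion $Q(b(\cuboid{S}))\subseteq\phi(\conv(S))+\mathbb{R}_{+}^{2n}$. Given $y^*\in Q(b(\cuboid{S}))$, the aim is to find $z\in\conv(S)$ with $\phi(z)\leq y^*$. Since $\{2i-1,2i\}$ is a cover of $\cuboid{S}$, one has $y^*_{2i-1}+y^*_{2i}\geq 1$, so the target box $B:=\prod_{i=1}^n[\max(0,1-y^*_{2i}),\min(1,y^*_{2i-1})]\subseteq[0,1]^n$ is non-empty, and the condition $\phi(z)\leq y^*$ amounts to $z\in B$. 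Using the cube-ideal description of $\conv(S)$ reduces the task to showing $B\cap\conv(S)\neq\emptyset$; I would attack this via Farkas, where an infeasibility certificate would combine multipliers on box and generalized set covering constraints (which, via the dictionary, translate to multipliers on transversal cover constraints valid at $y^*$, together with pair-cover constraints $y^*_{2i-1}+y^*_{2i}\geq 1$) to derive a contradiction.

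The hard part will be the Farkas step in $(\Leftarrow)$: single-constraint bounds obtained from individual transversal covers at $y^*$ do not directly assemble into infeasibility, and the argument must carefully balance the multiplier contributions from transversal covers against those from pair covers, and handle the coordinate-wise orientation of each multiplier relative to the endpoints of $B$.
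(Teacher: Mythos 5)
Your forward direction is essentially correct. For any $S$, if $\cuboid{S}$ is ideal then $Q(b(\cuboid{S}))$ is an integral blocking-type polyhedron equal to $\phi(\conv(S))+\mathbb{R}^{2n}_+$, intersecting with $T$ kills the nonnegative orthant and gives $Q(b(\cuboid{S}))\cap T=\phi(\conv(S))$, and your cover/inequality dictionary correctly translates: pair covers become $1\geq 1$ on $T$, transversal minimal covers become generalized set covering inequalities, and nonnegativity becomes the box. One small point worth recording: only \emph{minimal} covers index the inequalities of $Q(b(\cuboid{S}))$, whereas your dictionary speaks of arbitrary transversal covers; this is harmless because every transversal cover contains a minimal cover that is again a transversal (a pair $\{2i-1,2i\}$ cannot sit inside a transversal), and the associated inequality dominates on $[0,1]^n$.

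The backward direction is where you have a genuine gap, and you have already located it. Showing $B\cap\conv(S)\neq\emptyset$ is not a routine Farkas exercise: a coordinate $k$ can appear on the $I$-side of some generalized set covering inequalities carrying positive multiplier $\gamma_{I,J}$ and on the $J$-side of others, pulling $z_k$ toward $u_k$ in the first case and toward $\ell_k$ in the second; when you aggregate, the surplus $y^*_{2k-1}+y^*_{2k}-1\geq 0$ enters the accounting with an unhelpful sign in front of $\min(f_k,g_k)$ (the smaller of the two aggregate $\gamma$-weights at $k$), and choosing $z_k\in\{\ell_k,u_k\}$ by the sign of the aggregate coefficient $a_k$ does not eliminate this term. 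So as written the key inclusion $Q(b(\cuboid{S}))\subseteq\phi(\conv(S))+\mathbb{R}^{2n}_+$ is asserted, not proved; that inclusion \emph{is} the content of the theorem.

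Note also that the paper does not actually contain a proof of Theorem~\ref{ideal-cuboids} — it cites \cite{Abdi-cuboids} — but it does state Lemma~\ref{idealcore-LE} (from \cite{Abdi-mnp}), which is precisely the packaged version of the Farkas step you need and makes the whole theorem a translation exercise. Applied to a cuboid, that lemma says $\cuboid{S}$ is ideal if and only if $\phi(\conv(S))=Q(b(\cuboid{S}))\cap T$. Your coordinate translation shows the right-hand side is always the box-plus-GSC polytope, so the lemma's equality holds if and only if $\conv(S)$ equals that polytope, i.e., if and only if $S$ is cube-ideal — giving both directions at once. If you want a self-contained argument, the right move is to prove Lemma~\ref{idealcore-LE} directly (showing every vertex of $Q(b(\mathcal{C}))$ lies on the hyperplanes $x_{u_i}+x_{v_i}=1$ under the pairing hypothesis); that is where the LP-duality work genuinely lives, and it is cleaner there than inside your box-intersection formulation.
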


As a result, (the contrapositive of) Conjecture~\ref{main-con} for cuboids reduces to the following conjecture.

\begin{CN}\label{main-con-3}
There exists an integer $k\geq 4$ such that for every cube-ideal set, either all the points agree on a coordinate, or there is a subset of at most $k$ points that do not agree on a coordinate.
\end{CN}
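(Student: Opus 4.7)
First, I would translate the statement into clutter language via Theorem~\ref{ideal-cuboids} and Remark~\ref{kwise-kwise}: if $S \subseteq \{0,1\}^n$ is cube-ideal then $\cuboid{S}$ is an ideal clutter, and since each pair $\{2i-1,2i\}$ is a cover we have $\tau(\cuboid{S})\leq 2$, with equality precisely when the points of $S$ do not all agree on a coordinate. So the conjecture reduces to showing that every ideal cuboid of covering number two has at most $k$ members sharing no common element.

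Following the line of Proposition~\ref{quarter->intersecting}, idealness and LP duality yield a fractional packing $y$ of $\cuboid{S}$ of value $\tau(\cuboid{S})=2$. I would aim to choose $y$ to be $\tfrac{1}{3}$-integral; then an inclusion-minimal $y$-positive subfamily $\mathcal{C}'$ with $\sum_{C\in \mathcal{C}'}y_C>1$ has $|\mathcal{C}'|\leq 4$, by the same averaging argument used in the proof of Proposition~\ref{quarter->intersecting} but specialized to the denominator $3$. The members of $\mathcal{C}'$ share no common element, for a common element $u$ would force $\sum_{u\in C\in \mathcal{C}'}y_C\leq 1$, contradicting $\sum_{C\in \mathcal{C}'}y_C>1$. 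Translating back, this yields four points of $S$ that jointly disagree on every coordinate, proving Conjecture~\ref{main-con-3} for $k=4$. Weakening $\tfrac{1}{3}$-integrality to $\tfrac{1}{4}$-integrality yields $k=5$ and is exactly the restriction of Seymour's Conjecture~\ref{seymour-quarter} to ideal cuboids of covering number two.

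To push on the $\tfrac{1}{3}$-integrality of the packing, I would combine the facet description of a cube-ideal set --- $\conv(S)$ is cut out by box constraints and generalized set-covering inequalities --- with induction on $n$, splitting along a coordinate $i$ that is not fixed throughout $S$ into $S\cap\{x_i=0\}$ and $S\cap\{x_i=1\}$ and bootstrapping a packing from the two halves, after verifying that cube-idealness is inherited by these restrictions (possibly only after a further face-projection). The main obstacle is precisely this integrality statement: even the weaker $\tfrac{1}{4}$-version (Seymour's conjecture) is open in full generality and notoriously difficult even for binary clutters such as postman sets. Moreover, Proposition~\ref{pete} rules out $\tfrac{1}{2}$-integrality by ruling out $k=3$, so any proof must land precisely at the delicate $\tfrac{1}{3}$ threshold, which seems to require a new structural insight into cube-ideal sets beyond what is currently available.
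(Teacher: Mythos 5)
This statement is a \emph{conjecture}, not a theorem, in the paper: no proof is given, and the paper's actual contribution here is Theorem~\ref{con1-con3}, which shows it is equivalent to Conjecture~\ref{main-con}, plus a proof of the special case where $S$ is a binary space (Theorem~\ref{soc-3cyclecover}, which yields Theorem~\ref{main}). So there is no paper proof to compare against; I'll assess the proposal on its own terms.

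Your reduction and averaging argument are sound: by Theorem~\ref{ideal-cuboids}, a cube-ideal $S$ with no universally agreed coordinate gives an ideal cuboid with $\tau(\cuboid{S})=2$; LP duality then gives a fractional packing of value $2$; and if that packing were $\tfrac13$-integral (hence with nonzero entries $\geq \tfrac13$), the minimal-subfamily argument from Proposition~\ref{quarter->intersecting} yields at most $4$ members with no common element, i.e.\ at most $4$ points of $S$ not agreeing on a coordinate. The remarks that $\tfrac14$-integrality gives $k=5$ (Seymour's Conjecture~\ref{seymour-quarter} restricted to $\tau=2$) and that $\tfrac12$-integrality is ruled out by $T_{30}$ are also correct.

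The gap is that the $\tfrac13$-integral packing is an unproved, and likely too strong, hypothesis. It is strictly beyond even the $\tfrac14$-integral case, which is Seymour's conjecture and is open already for binary clutters. The proposed induction -- split on a coordinate not fixed by $S$ and combine packings from the two halves -- is not an argument: the halves, after projection, are indeed cube-ideal (this is a minor of $\cuboid{S}$), but either half may have all of its points agree on some coordinate, in which case the inductive hypothesis does not apply, and you give no mechanism for merging two half-packings into a $\tfrac13$-integral packing of the whole. Note also that the conjecture only asks for \emph{some} $k\geq 4$, so the $\tfrac14$ version already suffices; there is no structural need to hit $\tfrac13$. It is not even clear that $T_{30}$ admits a $\tfrac13$-integral packing of value $2$ (it does admit a $\tfrac14$-integral one via Theorem~\ref{main-strong} and Proposition~\ref{PG->integral}, since it embeds $PG(2,2)$); if it does not, your route to $k=4$ fails outright. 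By contrast, the paper's proof of the binary case takes an entirely different, structural route: Seymour's decomposition of binary matroids with the sums of circuits property into $1$-, $2$-, and $Y$-sums of graphic matroids, $F_7$, and $M(V_8)^\star$, combined with Jaeger's $8$-Flow Theorem, producing $3$ cycles covering the ground set, hence $4$ points of $S$ not agreeing on a coordinate.
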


Surprisingly, we show that Conjecture~\ref{main-con-3} is equivalent to Conjecture~\ref{main-con}! We need the notions of a {\it tangled} clutter and the {\it core} of an ideal tangled clutter.

\subsection{Tangled clutters}

Take an integer $k\geq 2$. If every $k$ members of a clutter have a common element, then so do every $k$ members of a deletion minor. Furthermore, for every element $v\in V$, $\tau(\mathcal{C})\geq \tau(\mathcal{C}\setminus v)\geq \tau(\mathcal{C})-1$, where $\tau(\mathcal{C}\setminus v) = \tau(\mathcal{C})-1$ if and only if $v$ belongs to some minimum cover of $\mathcal{C}$. Motivated by these observations, we make the following definition.

\begin{DE}
$\mathcal{C}$ is a \emph{tangled} clutter if $\tau(C)=2$ and every element belongs to a minimum cover.
\end{DE}

Observe that every cuboid with covering number two is a tangled clutter. Tangled clutters are relevant due to the following remark.
 
\begin{RE}\label{kwise-deletion}
Let $\mathcal{C}$ be a $k$-wise intersecting clutter. Let $\mathcal{C}'$ be a deletion minor of $\mathcal{C}$ that is minimal subject to $\tau(\mathcal{C}')\geq 2$. Then $\mathcal{C}'$ is a tangled $k$-wise intersecting clutter.
\end{RE}
\begin{proof}
It is clear that $\mathcal{C}'$ is a tangled clutter. As $\mathcal{C}'$ is a deletion minor of $\mathcal{C}$, every member of $\mathcal{C}'$ is also a member of $\mathcal{C}$, so every subset of at most $k$ members of $\mathcal{C}'$ have a common element, implying in turn that $\mathcal{C}'$ is a $k$-wise intersecting clutter.
\end{proof}

Thus, it suffices to prove Conjecture~\ref{main-con} for tangled clutters. In the rest of this section, we further reduce the conjecture to cuboids.

\subsection{Every ideal tangled clutter has an ideal core.}

Let $\mathcal{C}$ be an ideal tangled clutter over ground set $V$. Consider the following dual pair of linear programs:
\begin{align*}
(P)\quad &\begin{array}{ll} \min \quad  &{\bf 1}^\top x\\ \text{s.t.} & \sum \left(x_u:u\in C\right)\geq 1 \quad \forall C\in \mathcal{C} \\ & x\geq {\bf 0} \end{array}\qquad
(D)\quad&\begin{array}{ll} \max \quad  &{\bf 1}^\top y\\ \text{s.t.} & \sum \left(y_C:u\in C\in \mathcal{C}\right)\leq 1 \quad \forall u\in V\\ & y\geq {\bf 0}\end{array}
\end{align*} As $\mathcal{C}$ has covering number two, (P) has optimal value two, and every minimum cover of $\mathcal{C}$ yields an optimum to (P). By Strong LP Duality, the optimal value of (D) is also two; let $y^\star$ be a fractional packing of value two. By Complementary Slackness, whenever $y^\star_C>0$, then $|C\cap \{u,v\}|=1$ for every minimum cover $\{u,v\}$ of $\mathcal{C}$. This observation motivates the following definition.


\begin{DE}
Let $\mathcal{C}$ be an ideal tangled clutter over ground set $V$. The \emph{core of $\mathcal{C}$} is the clutter $$\core{\mathcal{C}}:=\{C\in \mathcal{C} : |C\cap \{u,v\}| = 1 \text{ for every minimum cover \{u,v\}}\}.
$$ 
\end{DE}

In this subsection, we prove that the core of every ideal tangled clutter is another ideal tangled clutter, one that arises from a cuboid. We need a few ingredients. 

Let $\mathcal{C}$ be a clutter over ground set $V$. Let $$
Q(\mathcal{C}) := \left\{x\in \mathbb{R}^V_+:\sum_{v\in C}x_v\geq 1\quad C\in \mathcal{C}\right\}.
$$ Observe that the $0-1$ points in $Q(\mathcal{C})$ are precisely the incidence vectors of the covers of $\mathcal{C}$, while the integral vertices of $Q(\mathcal{C})$ are precisely the incidence vectors of the minimal covers of $\mathcal{C}$. Basic polyhedral theory tells us that $\mathcal{C}$ is an ideal clutter if, and only if, $Q(\mathcal{C})$ is another integral polyhedron~(see \cite{Conforti14}, Theorem 4.1).

Recall that to duplicate an element $u$ of $\mathcal{C}$ is to introduce a new element $\bar{u}$, and replace $\mathcal{C}$ by the clutter over ground set $V\cup \{\bar{u}\}$, whose members are $\{C:C\in \mathcal{C},u\notin C\}\cup \{C\cup \{\bar{u}\} : C\in \mathcal{C},u\in C\}$. Recall further that a duplication of $\mathcal{C}$ is a clutter obtained from $\mathcal{C}$ by repeatedly duplicating elements. It can be readily checked that a clutter is $k$-wise intersecting if and only if some duplication of it is $k$-wise intersecting.

\begin{PR}\label{core-prop}
Let $\mathcal{C}$ be an ideal tangled clutter over ground set $V$. Let $G=(V,E)$ be the graph whose edges correspond to the minimum covers of $\mathcal{C}$. Then the following statements hold: \begin{enumerate}
\item $\core{\mathcal{C}} = \{C\in \mathcal{C} : y_C>0 \text{ for some fractional packing $y$ of value two}\}$,
\item $G$ is a bipartite graph, and
\item if $\{U,U'\}$ is the bipartition of a connected component of $G$, then the elements in $U$ (resp. $U'$) are duplicates in $\core{\mathcal{C}}$, and $|\{u,u'\}\cap C|=1$ for all $u\in U,u'\in U'$ and $C\in \core{\mathcal{C}}$.
\end{enumerate}
\end{PR}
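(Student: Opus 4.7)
My plan is to handle the three claims in order, with the real work concentrated in part (1); parts (2) and (3) will then follow from short combinatorial arguments that exploit the resulting description of $\core{\mathcal{C}}$.

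For part (1), one inclusion was essentially noted in the paragraph preceding the definition of $\core{\mathcal{C}}$: by LP complementary slackness applied with the incidence vector of any minimum cover $\{u,v\}$ (an optimal primal solution to (P), since $\tau(\mathcal{C})=2$), any fractional packing $y$ of value two with $y_C>0$ must satisfy $|C\cap\{u,v\}|=1$. The harder direction is to show that every $C\in\core{\mathcal{C}}$ lies in the support of \emph{some} fractional packing of value two. I would argue this by contradiction: assume $y_C=0$ for every optimal packing $y$. By strict complementary slackness (applied to the feasible pair (P), (D) with finite common optimum), there exist optimal primal-dual solutions $(x^*,y^*)$ such that $y_C^*=0$ forces $\sum_{w\in C}x_w^*>1$. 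Since $\mathcal{C}$ is ideal, $Q(\mathcal{C})$ is integral and the optimal face of (P) with respect to the cost vector $\mathbf{1}$ is the convex hull of incidence vectors of minimum covers. Writing $x^*$ as a convex combination of such vertices, some minimum cover $\{a,b\}$ must satisfy $|C\cap\{a,b\}|>1$, i.e., $\{a,b\}\subseteq C$, contradicting $C\in\core{\mathcal{C}}$.

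For part (2), I would pick any $C\in\core{\mathcal{C}}$; such a $C$ exists by part (1) together with strong LP duality. Every edge $\{u,v\}$ of $G$ is a minimum cover satisfying $|C\cap\{u,v\}|=1$, so $(C,V\setminus C)$ is a bipartition of the vertex set of $G$ with every edge crossing it; hence $G$ is bipartite. For part (3), observe that for every $C\in\core{\mathcal{C}}$ and every edge $\{u,v\}$ of $G$, membership in $C$ flips, so membership alternates along any walk in $G$. Fix a connected component with bipartition $\{U,U'\}$. For $u_1,u_2\in U$, any walk between them has even length, so $u_1\in C\iff u_2\in C$ for every $C\in\core{\mathcal{C}}$; this shows $u_1$ and $u_2$ are duplicates in $\core{\mathcal{C}}$. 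Similarly, any $u\in U$ and $u'\in U'$ are joined by an odd-length walk, forcing $|\{u,u'\}\cap C|=1$.

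The main obstacle is the reverse inclusion in part (1). It rests on two standard LP facts---strict complementary slackness and the integrality of $Q(\mathcal{C})$---but combining them cleanly requires an additional convex-combination step to pass from the specific $x^*$ produced by SCS to an actual minimum cover contained in $C$, where the size-two structure of minimum covers is then used to derive the contradiction with $C\in\core{\mathcal{C}}$.
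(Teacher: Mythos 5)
Your proposal is correct and follows the same route as the paper: $(\supseteq)$ in part (1) by complementary slackness, $(\subseteq)$ by strict complementary slackness, then parts (2) and (3) from the $|C\cap\{u,v\}|=1$ structure along edges of $G$. The paper's proof is terse (one sentence per item), and your write-up simply makes explicit the intermediate steps it leaves implicit, most notably the convex-decomposition of $x^\star$ over vertices of the optimal face of $Q(\mathcal{C})$ to extract a minimum cover contained in $C$.
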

\begin{proof}
{\bf (1)} $(\supseteq)$ follows from Complementary Slackness while $(\subseteq)$ follows from Strict Complementarity.
{\bf (2)} It follows from (1) that $\core{\mathcal{C}}\neq \emptyset$. It can now be readily checked that $G$ is a bipartite graph, as for each $C\in \core{\mathcal{C}}$, $\{C,V-C\}$ is a bipartition of $G$.
{\bf (3)} If $\{u,v\},\{u,w\}$ are minimum covers of $\mathcal{C}$, then $v,w$ are duplicates in $\core{\mathcal{C}}$. This observation proves (3).
\end{proof}

We need the following lemma.

\begin{LE}[\cite{Abdi-mnp}, Lemma 3.1]\label{idealcore-LE}
Let $\mathcal{C}$ be a clutter whose ground set can be partitioned into parts $\{u_1,v_1\},\ldots,$ $\{u_r,v_r\}$ such that $|\{u_i,v_i\}\cap C|=1$ for each $i\in [r]$ and $C\in \mathcal{C}$. Then $\mathcal{C}$ is ideal if, and only if, $\conv\big\{\chi_C:C\in \mathcal{C} \big\}=Q\big(b(\mathcal{C})\big) \cap \big\{x: x_{u_i}+x_{v_i}=1 ~\forall i\in [r]\big\}$. 
\end{LE}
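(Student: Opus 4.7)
The plan is to translate the identity to the cuboid setting and invoke Theorem~\ref{ideal-cuboids}. Let
\[
S\ :=\ \{(\chi_C(u_1),\ldots,\chi_C(u_r)):C\in\mathcal{C}\}\ \subseteq\ \{0,1\}^r,
\]
so that, up to a relabeling of the ground set, $\mathcal{C}=\cuboid{S}$. The linear map $\pi:\mathbb{R}^V\to\mathbb{R}^r$ defined by $\pi(x)_i:=x_{u_i}$ restricts to a bijection between $H:=\{x:x_{u_i}+x_{v_i}=1\text{ for all }i\in[r]\}$ and $\mathbb{R}^r$, with inverse $y\mapsto(y_1,1-y_1,\ldots,y_r,1-y_r)$. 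Both polytopes appearing in the lemma's identity lie in $H$, so they are determined by their images under $\pi$.

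Next I would compute those images. Linearity gives $\pi(\conv\{\chi_C:C\in\mathcal{C}\})=\conv(S)$ immediately. Substituting $x_{u_i}=y_i$ and $x_{v_i}=1-y_i$ in the defining inequalities $x\geq\mathbf{0}$ and $\sum_{v\in B}x_v\geq 1$ of $Q(b(\mathcal{C}))$ yields respectively the cube constraints $0\leq y_i\leq 1$ and the generalized set covering (GSC) inequality $\sum_{i:\,u_i\in B}y_i+\sum_{i:\,v_i\in B}(1-y_i)\geq 1$; hence $\pi(Q(b(\mathcal{C}))\cap H)$ is the polytope in $[0,1]^r$ cut out by the GSCs indexed by the members of $b(\mathcal{C})$. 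Now a GSC $\sum_{i\in I}y_i+\sum_{j\in J}(1-y_j)\geq 1$ (with $I\cap J=\emptyset$) is valid for $S$ precisely when $A_{I,J}:=\{u_i:i\in I\}\cup\{v_j:j\in J\}$ is a cover of $\mathcal{C}$; every inclusion-minimal such $A_{I,J}$ lies in $b(\mathcal{C})$, and the only members of $b(\mathcal{C})$ not of this form are the pair-covers $\{u_i,v_i\}$, which contribute only the trivial inequality $1\geq 1$. So $\pi(Q(b(\mathcal{C}))\cap H)$ in fact equals $[0,1]^r$ intersected with \emph{all} GSCs valid for $S$.

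To finish, I would apply the definition of cube-ideal: $S$ is cube-ideal iff every facet of $\conv(S)$ is defined by a cube inequality or a GSC, which is in turn equivalent to $\conv(S)=[0,1]^r\cap\bigcap\{\text{GSCs valid for }S\}$. Combined with the previous paragraph and the bijectivity of $\pi|_H$, this is equivalent to the lemma's identity $\conv\{\chi_C:C\in\mathcal{C}\}=Q(b(\mathcal{C}))\cap H$. Theorem~\ref{ideal-cuboids} closes the argument, since it asserts that $\cuboid{S}$ is ideal precisely when $S$ is cube-ideal. The only place demanding any care is the identification of valid GSCs for $S$ with non-trivial members of $b(\mathcal{C})$: one must verify that an inclusion-minimal pair-compatible cover of $\mathcal{C}$ is itself in $b(\mathcal{C})$, and must separately dismiss the pair-covers $\{u_i,v_i\}\in b(\mathcal{C})$, which produce only the trivial inequality $1\geq 1$.
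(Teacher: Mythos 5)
The paper cites this lemma from \cite{Abdi-mnp} without proof, so there is no in-paper argument to compare against. Your reduction is the natural one---identify $\mathcal{C}$ with $\cuboid{S}$, push both polytopes through the affine bijection $\pi\vert_H$, match the nontrivial members of $b(\mathcal{C})$ with the minimal GSCs valid for $S$, and invoke Theorem~\ref{ideal-cuboids}---and that part is carried out carefully, including the correct treatment of the pair-covers $\{u_i,v_i\}\in b(\mathcal{C})$ and the observation that an inclusion-minimal cover inside a pair-avoiding cover is itself pair-avoiding.

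The gap is in the sentence asserting that ``every facet of $\conv(S)$ is defined by a cube inequality or a GSC'' is equivalent to ``$\conv(S)=[0,1]^r\cap\bigcap\{\text{GSCs valid for }S\}$.'' The facet condition is strictly weaker when $\conv(S)$ is not full-dimensional. For $S=\{(1,0,0),(0,1,0),(0,0,1)\}$, the three edges of the triangle $\conv(S)$ are each defined by some $x_i\geq 0$, so the facet condition holds; yet $(\tfrac12,\tfrac12,\tfrac12)\in[0,1]^3$ satisfies every GSC valid for $S$ (every valid GSC here has $|I|+|J|\geq 2$) without lying in $\conv(S)$, so the two polytopes differ. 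Consistently, $\cuboid{S}=\{\{1,4,6\},\{2,3,6\},\{2,4,5\}\}$ is non-ideal: $Q(\cuboid{S})$ has the fractional vertex $(0,\tfrac12,0,\tfrac12,0,\tfrac12)$. What Theorem~\ref{ideal-cuboids} actually certifies, and what your translation reduces the lemma to, is the intersection form $\conv(S)=[0,1]^r\cap\{\text{valid GSCs}\}$. So the argument closes once you take that as the operative meaning of cube-ideal (which is how the cited result must be read) rather than trying to derive it from the paper's facet phrasing; as written, that intermediate ``which is in turn equivalent to'' step is false in the direction you need.
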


Here, $\chi_C$ denotes the incidence vector of $C$. 

We are now ready to prove the main result of this subsection.

\begin{THM}\label{ideal-core}
Let $\mathcal{C}$ be an ideal tangled clutter. Then $\core{\mathcal{C}}$ is a duplication of a cuboid. Moreover, $\core{\mathcal{C}}$ is an ideal tangled clutter.
\end{THM}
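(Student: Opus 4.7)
The plan is to prove Theorem~\ref{ideal-core} in three stages: first extract the duplication-of-cuboid structure of $\core{\mathcal{C}}$ from Proposition~\ref{core-prop}, then verify the tangled property, and finally deduce idealness by applying Lemma~\ref{idealcore-LE} to the underlying cuboid.

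For the first two stages, let $G$ be the bipartite graph of minimum covers of $\mathcal{C}$ given by Proposition~\ref{core-prop}, with connected components having bipartitions $(U_i, U_i')$ for $i \in [n]$. Part~(3) of that proposition tells me that the elements of $U_i$ (respectively $U_i'$) are duplicates in $\core{\mathcal{C}}$ and every $C \in \core{\mathcal{C}}$ satisfies either $U_i \subseteq C$ with $U_i' \cap C = \emptyset$, or the reverse. Choosing representatives $u_i^* \in U_i$ and $v_i^* \in U_i'$, defining $p^C \in \{0,1\}^n$ by $p^C_i = 1$ iff $u_i^* \in C$, and setting $S := \{p^C : C \in \core{\mathcal{C}}\}$, I will identify the restriction of $\core{\mathcal{C}}$ to $\{u_1^*, v_1^*, \ldots, u_n^*, v_n^*\}$ as $\cuboid{S}$, and $\core{\mathcal{C}}$ itself as the duplication that inflates $u_i^*$ to $U_i$ and $v_i^*$ to $U_i'$. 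Tangledness follows immediately: edges of $G$ remain covers of $\core{\mathcal{C}}$ (so $\tau(\core{\mathcal{C}}) \le 2$), the fractional packing of $\mathcal{C}$ of value $2$ is supported on $\core{\mathcal{C}}$ by Proposition~\ref{core-prop}(1) (so $\tau(\core{\mathcal{C}}) \ge 2$), and every element of $V$ lies on some edge of $G$ by the tangled hypothesis on $\mathcal{C}$.

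Since idealness is preserved under duplication, it suffices to show that $\cuboid{S}$ is ideal. Applying Lemma~\ref{idealcore-LE} with the natural pair partition $\{u_i^*, v_i^*\}_{i \in [n]}$, the task reduces to proving $Q(b(\cuboid{S})) \cap H \subseteq \conv\{\chi_C : C \in \cuboid{S}\}$, where $H := \{x : x_{u_i^*} + x_{v_i^*} = 1 \ \forall i\}$. Given $y^* \in Q(b(\cuboid{S})) \cap H$, I construct the uniform lift $\tilde y \in \mathbb{R}^V$ defined by $\tilde y_w := y^*_{u_i^*}$ for $w \in U_i$ and $\tilde y_w := y^*_{v_i^*}$ for $w \in U_i'$. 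The idealness of $\mathcal{C}$ yields $Q(b(\mathcal{C})) = \conv\{\chi_C : C \in \mathcal{C}\} + \mathbb{R}_+^V$, so once $\tilde y \in Q(b(\mathcal{C}))$ is established, I can write $\tilde y = \sum_C \lambda_C \chi_C + r$ with $\lambda \ge \mathbf{0}$, $\sum_C \lambda_C = 1$, and $r \ge \mathbf{0}$. For every edge $\{u, v\}$ of $G$ with $u \in U_i$, $v \in U_i'$, the identity $\tilde y_u + \tilde y_v = 1$ combines with $\chi_{C, u} + \chi_{C, v} \ge 1$ (since $\{u, v\}$ is a cover of $\mathcal{C}$) to force both $r_u = r_v = 0$ and $|C \cap \{u, v\}| = 1$ whenever $\lambda_C > 0$; ranging over all edges of $G$ then yields $r = \mathbf{0}$ and $C \in \core{\mathcal{C}}$ throughout the support of $\lambda$. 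Projecting onto the representative coordinates gives $y^* \in \conv\{\chi_{C'} : C' \in \cuboid{S}\}$, as required.

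The main obstacle will be the lifting step, namely verifying $\tilde y \in Q(b(\mathcal{C}))$, i.e., $\sum_{v \in B} \tilde y_v \ge 1$ for every minimal cover $B$ of $\mathcal{C}$. For minimum covers (edges of $G$) this is immediate by construction. For larger minimal covers, I will pass through the contracted set $B' := \{u_i^* : B \cap U_i \ne \emptyset\} \cup \{v_i^* : B \cap U_i' \ne \emptyset\}$ and argue that $B'$ is a cover of $\cuboid{S}$: for every $C \in \core{\mathcal{C}}$, the duplicate structure forces $B \cap C \ne \emptyset$ to imply that $B'$ meets the corresponding member of $\cuboid{S}$. Combined with $\sum_{v \in B} \tilde y_v \ge \sum_{v \in B'} y^*_v$ (using $|B \cap U_i| \ge 1$ whenever $B \cap U_i \ne \emptyset$ and $y^* \ge \mathbf{0}$) and $\sum_{v \in B'} y^*_v \ge 1$ from $y^* \in Q(b(\cuboid{S}))$, this gives the required inequality.
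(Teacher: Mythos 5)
Your proposal is correct and follows the same overall framework as the paper: extract the cuboid structure from Proposition~\ref{core-prop}, verify tangledness via the fractional packing of value two, and reduce idealness to the polyhedral identity in Lemma~\ref{idealcore-LE}. Where you diverge is in how that identity is established. You apply Lemma~\ref{idealcore-LE} directly to $\cuboid{S}$ and \emph{lift} a point $y^\star\in Q(b(\cuboid{S}))\cap H$ uniformly to $\tilde y\in\mathbb{R}^V$, then prove by hand that $\tilde y\in Q(b(\mathcal{C}))$ by passing from a minimal cover $B$ of $\mathcal{C}$ to the contracted cover $B'$ of $\cuboid{S}$. This lifting step is genuinely non-trivial, and your argument for it is sound (it uses $|B\cap U_i|\ge 1$, $y^\star\ge\mathbf 0$, and the fact that $B'$ covers $\cuboid{S}$ because each $C\in\core{\mathcal{C}}\subseteq\mathcal{C}$ meets $B$). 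The paper instead proves the identity \emph{over the full ground set} first (its Claim~2), where entry into $Q(b(\mathcal{C}))$ comes for free from the trivial inclusion $Q(b(\core{\mathcal{C}}))\subseteq Q(b(\mathcal{C}))$ induced by $\core{\mathcal{C}}\subseteq\mathcal{C}$, and then \emph{projects} down to the representative coordinates (its Claim~3, left as an exercise) before invoking Lemma~\ref{idealcore-LE}. The two routes are essentially dual: you trade the paper's easy inclusion for an explicit lifting verification, and in exchange you avoid the projection exercise. The complementary-slackness argument you use to force $r=\mathbf 0$ and $\supp(\lambda)\subseteq\core{\mathcal{C}}$ is exactly the one the paper uses in its Claim~2.
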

\begin{proof}
Denote by $V$ the ground set of $\mathcal{C}$. Let $G=(V,E)$ be the graph whose edges correspond to the minimum covers of $\mathcal{C}$. By Proposition~\ref{core-prop}~(2), $G$ is a bipartite graph. Let $r$ be the number of connected components of $G$, and for each $i\in [r]$, let $\{U_i,V_i\}$ be the bipartition of the $i\textsuperscript{th}$ connected component. By Proposition~\ref{core-prop}~(3), for each $i\in [r]$, the elements in $U_i$ are duplicates in $\core{\mathcal{C}}$, the elements in $V_i$ are duplicates in $\core{\mathcal{C}}$, and $|\{u,v\}\cap C|=1$ for all $u\in U_i,v\in V_i$ and $C\in \core{\mathcal{C}}$. 
That is, each $C \in \core{\mathcal{C}}$ is determined by $r$ binary choices; in each connected component of $G$, $C$ must contain exactly one of the two parts of the bipartition. This allows a more concise representation of the core. For each $C
  \in \core{\mathcal{C}}$, define $p_{C} \in \{0, 1\}^{r}$ such that
  $$(p_{C})_{i} = \begin{cases}
    0 & \text{if } C \cap (U_{i} \cup V_{i}) = V_{i}\\
    1 & \text{if } C \cap (U_{i} \cup V_{i}) = U_{i}
  \end{cases}$$ Let $S := \{p_{C} : C \in \core{\mathcal{C}}\}\subseteq \{0,1\}^r$. Then $\core{\mathcal{C}}$ is a duplication of $\cuboid{S}$.

\begin{CLM} 
$\core{\mathcal{C}}$ is a tangled clutter.
\end{CLM}
\begin{cproof}
As a subset of $\mathcal{C}$, $\core{\mathcal{C}}$ has covering number at most two, and every element of it appears in a cover of cardinality two. Thus, to prove the claim, it suffices to show that $\core{\mathcal{C}}$ has covering number at least two. Let $y$ be a fractional packing of $\mathcal{C}$ of value two. Then $\supp(y)\subseteq \core{\mathcal{C}}$ by Proposition~\ref{core-prop}~(1), so $y$ is also a fractional packing of $\core{\mathcal{C}}$. Subsequently, $\core{\mathcal{C}}$ has covering number at least two, as required.
\end{cproof}

It remains to prove that $\core{\mathcal{C}}$ is an ideal clutter; we use Lemma~\ref{idealcore-LE} to prove this. We know that
\begin{equation}\tag{$\star$}
\{\chi_C:C\in \core{\mathcal{C}}\} = \{\chi_C:C\in \mathcal{C}\}\cap \big\{x: x_u+x_v = 1,~\{u,v\}\in E\big\}.
\end{equation}

\begin{CLM} 
$\conv\{\chi_C:C\in \core{\mathcal{C}}\}= Q\big(b(\core{\mathcal{C}})\big) \cap \big\{x: x_{u}+x_{v}=1, ~\{u,v\}\in E\big\}$.
\end{CLM}
\begin{cproof}
 $(\subseteq)$ follows immediately from $(\star)$. $(\supseteq)$ Pick a point $x^\star$ in the set on the right-hand side. As $Q\big(b(\core{\mathcal{C}})\big)\subseteq Q\big(b(\mathcal{C})\big)$, we have $x^\star\in Q\big(b(\mathcal{C})\big)$. Since $\mathcal{C}$ is ideal, so is $b(\mathcal{C})$, implying that for some $\lambda\in \mathbb{R}_+^{\mathcal{C}}$ with $\sum_{C\in \mathcal{C}} \lambda_C=1$, we have that $$x^\star \geq \sum_{C\in \mathcal{C}} \lambda_C \chi_{C}.$$ Since for all $\{u,v\}\in E$, we have that $x^\star_{u}+x^\star_{v}=1$ and $\{u,v\}\in b(\mathcal{C})$, equality must hold above and by $(\star)$, if $\lambda_C>0$ then $C\in \core{\mathcal{C}}$. Hence, $x^\star\in \conv\{\chi_C:C\in \core{\mathcal{C}}\}$, as required.
\end{cproof}

For each $i\in [r]$, pick $u_i\in U_i$ and $v_i\in V_i$, and let $\mathcal{C}'$ be the clutter over ground set $\{u_1,v_1,\ldots,u_r,v_r\}$ obtained from $\core{\mathcal{C}}$ after contracting $V-\{u_1,v_1,\ldots,u_r,v_r\}$. Notice that $|\{u_i,v_i\}\cap C|=1$ for all $i\in [r]$ and $C\in \mathcal{C}'$. (Observe that $\mathcal{C}'$ is nothing but $\cuboid{S}$.) 

\begin{CLM} 
$\conv\{\chi_C:C\in \mathcal{C}' \}=Q\big(b(\mathcal{C}')\big) \cap \big\{z: z_{u_i}+z_{v_i}=1 ~ i\in [r]\big\}$.
\end{CLM}
\begin{cproof}
We use Claim~2 to prove this equality. Observe that $\conv\{\chi_C:C\in \mathcal{C}' \}$ is the projection of $\conv\{\chi_C:C\in \core{\mathcal{C}}\}$ onto the coordinates $\{u_i,v_i:i\in [r]\}$. Thus, to finish the proof, it suffices to show that $Q\big(b(\mathcal{C}')\big) \cap \big\{z: z_{u_i}+z_{v_i}=1,~ i\in [r]\big\}$ is the projection of $Q\big(b(\core{\mathcal{C}})\big) \cap \big\{x: x_{u}+x_{v}=1, ~\{u,v\}\in E\big\}$ onto the same coordinates. We leave this as an easy exercise for the reader.
\end{cproof}

It therefore follows from Lemma~\ref{idealcore-LE} that $\mathcal{C}'$ is an ideal clutter. As $\core{\mathcal{C}}$ is a duplication of $\mathcal{C}'$, $\core{\mathcal{C}}$ is an ideal clutter, too, thereby completing the proof.
\end{proof}

\subsection{Conjectures~\ref{main-con-3} and~\ref{main-con} are equivalent.}

\begin{THM}\label{con1-con3}
Conjecture~\ref{main-con-3} for $k$ is equivalent to Conjecture~\ref{main-con} for the same~$k$.
\end{THM}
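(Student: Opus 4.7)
The plan is to prove the equivalence in two directions, using the cuboid machinery from this section as the bridge.

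\textbf{Forward direction} (Conjecture~\ref{main-con} for $k$ implies Conjecture~\ref{main-con-3} for $k$). This is the direction essentially packaged by Remark~\ref{kwise-kwise} and Theorem~\ref{ideal-cuboids}. I would argue by contraposition: suppose $S\subseteq \{0,1\}^n$ is a cube-ideal set whose points do not all agree on a coordinate, and yet every subset of at most $k$ points of $S$ does agree on a coordinate. By Remark~\ref{kwise-kwise}, $\cuboid{S}$ is $k$-wise intersecting. By Theorem~\ref{ideal-cuboids}, $\cuboid{S}$ is ideal. This contradicts Conjecture~\ref{main-con} for $k$.

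\textbf{Reverse direction} (Conjecture~\ref{main-con-3} for $k$ implies Conjecture~\ref{main-con} for $k$). Suppose for contradiction that $\mathcal{C}$ is an ideal $k$-wise intersecting clutter. By Remark~\ref{kwise-deletion} applied to $\mathcal{C}$, there is a deletion minor $\mathcal{C}'$ of $\mathcal{C}$ that is tangled and $k$-wise intersecting; since ideality is preserved under minors, $\mathcal{C}'$ is also ideal. Apply Theorem~\ref{ideal-core} to $\mathcal{C}'$: the clutter $\core{\mathcal{C}'}$ is an ideal tangled clutter and is a duplication of some cuboid $\cuboid{S}$.

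The small verification to make is that $\core{\mathcal{C}'}$ is itself $k$-wise intersecting. Any at most $k$ members of $\core{\mathcal{C}'}$ are in particular members of $\mathcal{C}'$, so they share a common element because $\mathcal{C}'$ is $k$-wise intersecting; and since $\core{\mathcal{C}'}$ is tangled, $\tau(\core{\mathcal{C}'})=2$, so no element is in every member of $\core{\mathcal{C}'}$. Now invoke the fact recorded in the paper that a clutter is $k$-wise intersecting (resp.\ ideal) if and only if some duplication of it is $k$-wise intersecting (resp.\ ideal): from this, $\cuboid{S}$ is both ideal and $k$-wise intersecting. By Theorem~\ref{ideal-cuboids}, $S$ is cube-ideal; by Remark~\ref{kwise-kwise}, the points of $S$ do not agree on a coordinate but every subset of at most $k$ of them does. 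This contradicts Conjecture~\ref{main-con-3} for $k$.

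\textbf{Main obstacle.} All the genuinely hard work has already been done in Theorem~\ref{ideal-core}, which reduces the study of ideal tangled clutters to cuboids. The only subtlety in the present proof is the bookkeeping in the reverse direction: one must compose the reductions ``pass to a tangled deletion minor \textrightarrow{} take the core \textrightarrow{} undo duplications to recover a cuboid'' while checking at each step that both ideality and $k$-wise intersectingness are preserved. Once this is laid out, the equivalence falls out cleanly.
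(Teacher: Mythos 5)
Your proof is correct and follows essentially the same route as the paper: the easy direction uses Remark~\ref{kwise-kwise} and Theorem~\ref{ideal-cuboids} (the paper notes this direction was already established when Conjecture~\ref{main-con-3} was introduced), and the harder direction passes to a minimal tangled deletion minor via Remark~\ref{kwise-deletion}, takes the core via Theorem~\ref{ideal-core}, checks $k$-wise intersectingness is inherited, undoes duplications to reach a cuboid $\cuboid{S}$, and then translates back to $S$ via Remark~\ref{kwise-kwise} and Theorem~\ref{ideal-cuboids}. The bookkeeping you identify as the one subtlety (preservation of idealness and $k$-wise intersectingness under deletion minor, core, and duplication) is exactly what the paper checks as well.
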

\begin{proof}
We already showed $(\Leftarrow)$.
It remains to prove $(\Rightarrow)$. Suppose Conjecture~\ref{main-con} is false for some $k\geq 4$. That is, there is an ideal $k$-wise intersecting clutter $\mathcal{C}$. Let $\mathcal{C}'$ be a deletion minor of $\mathcal{C}$ that is minimal subject to $\tau(\mathcal{C}')\geq 2$. By Remark~\ref{kwise-deletion}, $\mathcal{C}'$ is an ideal tangled $k$-wise intersecting clutter. 
By Theorem~\ref{ideal-core}, $\core{\mathcal{C}'}$ is an ideal tangled clutter that is a duplication of some cuboid, say $\cuboid{S}$.
As every $k$ members of $\mathcal{C}'$ have a common element, so do every $k$ members of $\core{\mathcal{C}'}$, so the latter is $k$-wise intersecting. As a result, $\cuboid{S}$ is an ideal $k$-wise intersecting clutter, so by Remark~\ref{kwise-kwise} and Theorem~\ref{ideal-cuboids}, $S$ is a cube-ideal set whose points do not agree on a coordinate yet every subset of $\leq k$ points do. Therefore, $S$ refutes Conjecture~\ref{main-con-3} for $k$, as required.
\end{proof}


\section{Graphs, binary matroids, and binary clutters}\label{sec:proof}

In this section, we prove Theorem~\ref{main} and Proposition~\ref{pete}. More specifically, in \S\ref{sec:graphs}, we explain how every graph leads to a cube-ideal set and how the $8$-Flow Theorem proves Conjecture~\ref{main-con-3} for such cube-ideal sets; we also give an example of an ideal $3$-wise intersecting binary clutter, thereby proving Proposition~\ref{pete}. After a primer on binary matroids in \S\ref{sec:primer}, we introduce the class of binary matroids with the sums of circuits property~\ref{sec:binary-matroids}. Using the machinery developed in earlier subsections, we finally prove Theorem~\ref{main} in \S\ref{sec:main-proof}.

A key notion throughout this section is the following. Let $S\subseteq \{0,1\}^n$. For $x,y\in \{0,1\}^n$, $x\tr y$ denotes the coordinate-wise sum of $x,y$ modulo $2$. We say that $S$ is a {\it vector space over $GF(2)$}, or simply a {\it binary space}, if $a\tr b\in S$ for all $a,b\in S$. Notice that a nonempty binary space necessarily contains ${\bf 0}$. 

\begin{RE}\label{binaryclutter<->binaryspace}
Let $S\subseteq \{0,1\}^n$ contain ${\bf 0}$. Then $\cuboid{S}$ is a binary clutter if, and only if, $S$ is a binary space.
\end{RE}

Recall that a clutter is binary if the symmetric difference of any odd number of members contains a member. A binary clutter can be characterized as the {\it port} of a binary matroid, and also as the clutter of the {\it odd circuits of a signed binary matroid} (see \cite{Guenin02}). The above remark shows yet another way to obtain a binary clutter from a binary matroid.

\subsection{The $8$-Flow Theorem}\label{sec:graphs}

Let $G=(V,E)$ be a graph where loops and parallel edges are allowed, where every loop is treated as an edge not incident to any vertex. A {\it cycle} is a subset $C\subseteq E$ such that every vertex is incident with an even number of edges in $C$. A {\it bridge of $G$} is an edge $e$ that does not belong to any cycle. The {\it cycle space of $G$} is the set $$\cycle{G}:=\left\{\chi_C : C\subseteq E \text{ is a cycle}\right\}\subseteq \{0,1\}^E.$$ As $\emptyset$ is a cycle, and the symmetric difference of any two cycles is also a cycle, it follows that $\cycle{G}$ is a binary space.  We require the following two results on cycle spaces of graphs.  

\begin{RE}\label{graph-cyclespace}
Let $G=(V,E)$ be a graph. Then the following statements hold: \begin{enumerate}
\item The points in $\cycle{G}$ agree on a coordinate if, and only if, $G$ has a bridge.
\item For all $k\in \mathbb{N}$, $\cycle{G}$ has a subset of at most $k+1$ points that do not agree on a coordinate if, and only if, $G$ has at most $k$ cycles the union of which is $E$. 
\end{enumerate}
\end{RE}
\begin{proof}
{\bf (1)} $(\Rightarrow)$ Since ${\bf 0}\in S$, we must have that $\cycle{G}\subseteq \{x:x_e=0\}$ for some edge $e\in E$, so $e$ is not contained in any cycle of $G$, implying in turn that $e$ is a bridge. $(\Leftarrow)$ is left as an exercise. {\bf (2)} $(\Leftarrow)$ Pick $k$ cycles $C_1,\ldots,C_k$ whose union is $E$. Then the $k+1$ points ${\bf 0},\chi_{C_1},\ldots,\chi_{C_k}$, which all belong to $\cycle{G}$, do not agree on a coordinate. $(\Rightarrow)$ Pick $k+1$ points $p_1,p_2,\ldots,p_k,p_{k+1}$ in $\cycle{G}$ that do not agree on a coordinate. Then $p_1\tr p_{k+1},p_2\tr p_{k+1},\ldots,p_k\tr p_{k+1},{\bf 0}$, all of which belong to $\cycle{G}$, do not agree on a coordinate either. Pick cycles $C_1,\ldots,C_k$ such that $\chi_{C_i}=p_i\tr p_{k+1}$ for $i\in [k]$. It can be readily checked that $C_1\cup \ldots\cup C_k=E$, thereby finishing the proof.
\end{proof}

\begin{THM}[\cite{Seymour79,Barahona86}, see \cite{Abdi-cuboids}]\label{graph-cubeideal}
The cycle space of every graph is a cube-ideal set.
\end{THM}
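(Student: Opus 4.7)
The plan is to write down an explicit inequality description of $\conv(\cycle{G})$ and verify that each defining inequality has one of the three permissible forms (non-negativity, upper bound, or generalized set covering). The starting point is the familiar cycle--cut duality: a subset $C\subseteq E$ is a cycle if and only if $|C\cap \delta(S)|$ is even for every $S\subseteq V$. Equivalently, for every $S\subseteq V$ and every $F\subseteq \delta(S)$ with $|F|$ odd, no cycle $C$ satisfies $C\cap \delta(S)=F$, and this infeasibility translates directly into validity of the generalized set covering inequality
\begin{equation*}
\sum_{e\in F}(1-x_e) \;+\; \sum_{e\in \delta(S)\setminus F} x_e \;\geq\; 1
\end{equation*}
for $\conv(\cycle{G})$.

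Let $P$ be the polytope defined by $0\le x_e\le 1$ for all $e\in E$ together with the above cut-parity inequalities over all choices of $S$ and $F$. The inclusion $\conv(\cycle{G})\subseteq P$ is immediate from validity, so the content of the theorem is the reverse inclusion; once that is in hand, every facet of $\conv(\cycle{G})$ is induced by some defining inequality of $P$ and therefore has one of the three required forms, establishing that $\cycle{G}$ is cube-ideal. For the reverse inclusion I would invoke the Edmonds--Johnson theorem on the $T$-join polytope in the special case $T=\emptyset$: the $\emptyset$-joins are precisely the cycles of $G$, and the $T$-cut parity inequalities (whose parity condition is $|F|+|S\cap T|$ odd) specialize exactly to those above when $T=\emptyset$, so Edmonds--Johnson identifies $P$ with $\conv(\cycle{G})$.

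The main obstacle is the integrality of $P$, which is precisely the content Edmonds--Johnson supplies. A more self-contained route would proceed by induction on $|E|$: a bridge $e$ gives a degenerate facet $x_e=0$ of the correct form and lets one recurse on $G-e$; a pair of parallel edges lets one symmetrize coordinates and pass to a smaller graph; a $2$-edge cut lets one split $G$ along it; and the base case of a $2$-edge-connected graph of minimum degree at least three is handled by the combinatorial core of Seymour's sums-of-circuits theorem, which is exactly the route of the cited works of Seymour and Barahona. Either way, the heart of the difficulty is showing that the polyhedron cut out by the box constraints and the cut-parity generalized set covering inequalities has no fractional vertices.
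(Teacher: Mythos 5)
Your argument is correct, and it takes a genuinely different route from the one the paper's citations suggest. The paper invokes this result as a black box from Seymour's sums-of-circuits theorem and Barahona--Gr\"{o}tschel's cycle-polytope paper: that line of attack works at the level of binary matroids, characterizes exactly which binary matroids have a cube-ideal cycle space (those with no $F_7^*$, $R_{10}$, or $M(K_5)^*$ minor), and for graphic matroids it ultimately rests on the same structural decomposition machinery that appears later in this section as Theorem~\ref{seymour-soc}. Your primary route instead specializes to graphs from the outset and identifies $\conv(\cycle{G})$ with the $T$-join polytope for $T=\emptyset$, whose linear description by box constraints together with the cut-parity inequalities $x(\delta(S)\setminus F) - x(F) \geq 1 - |F|$ (for $|F|+|S\cap T|$ odd) is a classical fact in the Edmonds--Johnson circle of ideas, proved via the perfect matching polytope and total dual integrality rather than via matroid decomposition. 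That is genuinely more elementary for graphs: you trade Seymour's decomposition theorem for Edmonds' matching polytope theorem. The price is that it does not extend to the full sums-of-circuits class (which the paper needs for Theorem~\ref{soc-3cyclecover}), but for Theorem~\ref{graph-cubeideal} itself it is a clean and correct alternative, and the validity argument, the reduction to integrality of $P$, and the observation that each cut-parity inequality is a generalized set covering inequality are all sound. One small caution on attribution: what is usually called the Edmonds--Johnson theorem concerns the dominant (up-hull) of the $T$-join polytope; the exact polytope description you invoke is a standard consequence recorded, for instance, in Schrijver's \emph{Combinatorial Optimization}, so you should cite that form rather than the original up-hull statement. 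Your fallback induction sketch (bridge, parallel edges, $2$-edge cut, then a $3$-edge-connected base handled by Seymour) is precisely the shape of the argument in the cited works and would reproduce the intended proof.
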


We need the following version of the celebrated {\it $8$-Flow Theorem} of Jaeger~\cite{Jaeger79}. (The second equivalent statement follows from Remark~\ref{graph-cyclespace}.)

\begin{THM}[\cite{Jaeger79}]\label{jaeger}
Let $G=(V,E)$ be a graph. Then either $G$ has a bridge, or there are most $3$ cycles the union of which is $E$. That is, given $\cycle{G}\subseteq \{0,1\}^E$, either all the points agree on a coordinate, or there is a subset of at most $4$ points that do not agree on a coordinate.
\end{THM}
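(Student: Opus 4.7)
The plan is to reduce the theorem directly to Jaeger's classical $8$-Flow Theorem. The two assertions in the statement are equivalent via Remark~\ref{graph-cyclespace}: part~(1) converts ``$G$ has a bridge'' into ``the points of $\cycle{G}$ agree on a coordinate'', and part~(2), applied with $k=3$, converts ``at most three cycles of $G$ cover $E$'' into ``a subset of at most four points of $\cycle{G}$ do not agree on a coordinate''. So it suffices to establish the first, graph-theoretic form of the dichotomy.

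If $G$ has a bridge, the first alternative holds and I am done, so I would assume $G$ is bridgeless. Here I would invoke Jaeger's theorem: every bridgeless graph admits a nowhere-zero $8$-flow, and by Tutte's equivalence between nowhere-zero $\Gamma$-flows for different abelian groups $\Gamma$ of the same order, this is the same as admitting a nowhere-zero $\mathbb{Z}_2^3$-flow. Such a flow is a function $\varphi\colon E \to \{0,1\}^3\setminus\{\mathbf 0\}$ whose three coordinate projections $\varphi_1,\varphi_2,\varphi_3$ are each $\mathbb{Z}_2$-flows on $G$. Since $\mathbb{Z}_2$-flows on $G$ correspond bijectively to cycles of $G$ via taking supports, setting $C_i := \{e\in E : \varphi_i(e)=1\}$ for $i=1,2,3$ yields three cycles of $G$, and the ``nowhere-zero'' condition says precisely that every edge lies in at least one $C_i$, so $C_1\cup C_2\cup C_3 = E$. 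This gives the three $E$-covering cycles that the first formulation asks for, and feeding them through Remark~\ref{graph-cyclespace} produces the four points $\mathbf 0,\chi_{C_1},\chi_{C_2},\chi_{C_3}$ of $\cycle{G}$ that do not agree on any coordinate, yielding the second formulation.

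Since we are invoking Jaeger's theorem as a black box, there is no serious technical obstacle here; the entire argument is bookkeeping. The only step I would pause on is the translation from nowhere-zero $\mathbb{Z}_2^3$-flows to triples of $E$-covering even subgraphs, but this is classical: a $\mathbb{Z}_2$-flow is the same object as an even edge subset, and the three coordinates of a vector-valued flow act independently, so the condition that $\varphi$ is \emph{nowhere zero} unpacks literally to ``every edge belongs to at least one coordinate's support''.
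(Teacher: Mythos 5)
Your proof is correct and follows the same route the paper intends: the paper states the theorem as a cited result from Jaeger and simply notes that the second formulation follows from Remark~\ref{graph-cyclespace}, without giving further argument. You add one extra (standard, correct) step that the paper leaves implicit, namely deriving the ``three cycles covering $E$'' formulation from the nowhere-zero $8$-flow formulation via Tutte's group-independence theorem and the identification of $\mathbb{Z}_2$-flows with even edge sets; this is harmless additional bookkeeping, not a different approach.
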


One may wonder whether the $3,4$ in Theorem~\ref{jaeger} may be replaced by $2,3$? The answer is no, due to the Petersen graph (see Figure~\ref{fig:pete}):

\begin{RE}[see \cite{Tutte66}]\label{Tutte}
The edge set of the Petersen graph is not the union of $2$ cycles.  
\end{RE}
\begin{proof}
Suppose for a contradiction that the edge set, $E$, of the Petersen graph is the union of two cycles $C_1,C_2$. Then $J_1:=E-C_1,J_2:=E-C_2$ are disjoint {\it postman sets}, i.e. the odd-degree vertices of $J_i$ coincide with the odd-degree vertices of the graph. Since $J_1\cap J_2=\emptyset$, and the graph is cubic, $J_1$ and $J_2$ must be perfect matchings. Clearly, $E-(J_1\cup J_2)$ is another perfect matching, so the Petersen graph is $3$-edge-colourable, a contradiction.
\end{proof}

\begin{figure}[h]
\begin{subfigure}[t]{0.21\textwidth}
\includegraphics[width=\linewidth]{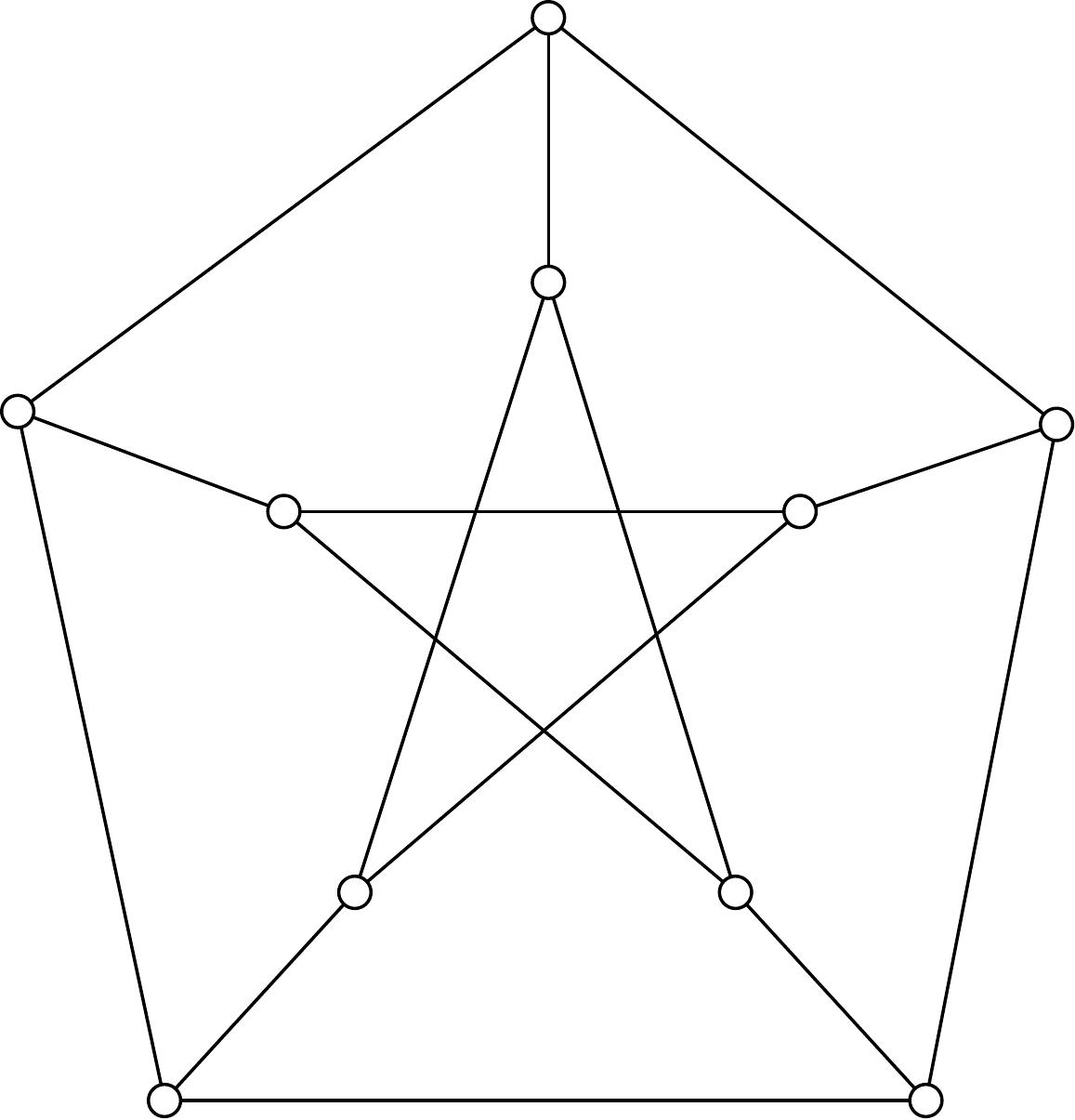}
\caption{The Petersen graph} \label{fig:pete}
\end{subfigure}
\hspace*{\fill}
\begin{subfigure}[t]{0.31\textwidth}
\includegraphics[width=\linewidth]{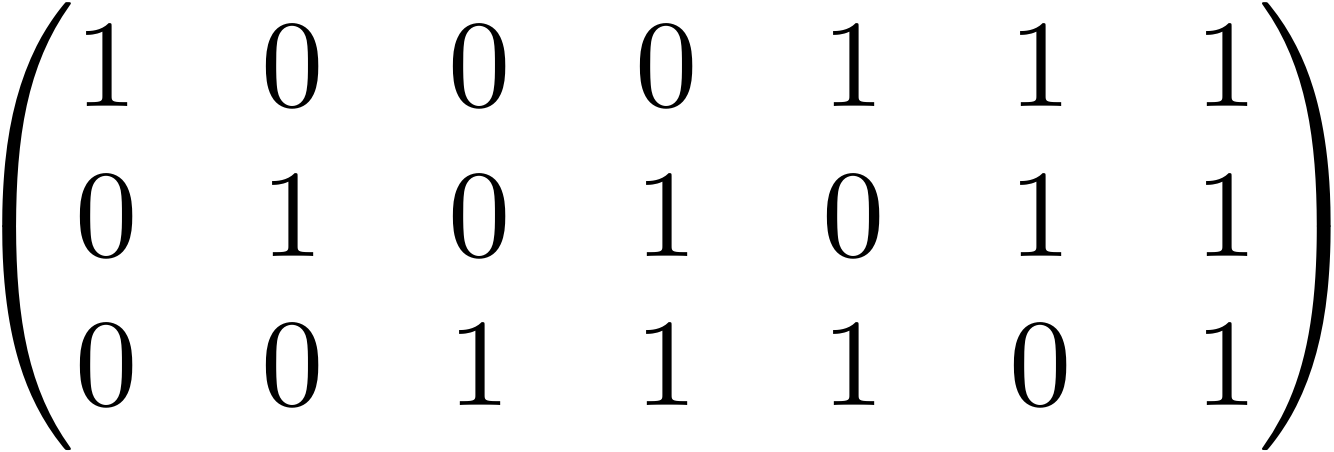}
\caption{Representation of the Fano matroid} \label{fig:fano}
\end{subfigure}
\hspace*{\fill}
\begin{subfigure}[t]{0.21\textwidth}
\includegraphics[width=\linewidth]{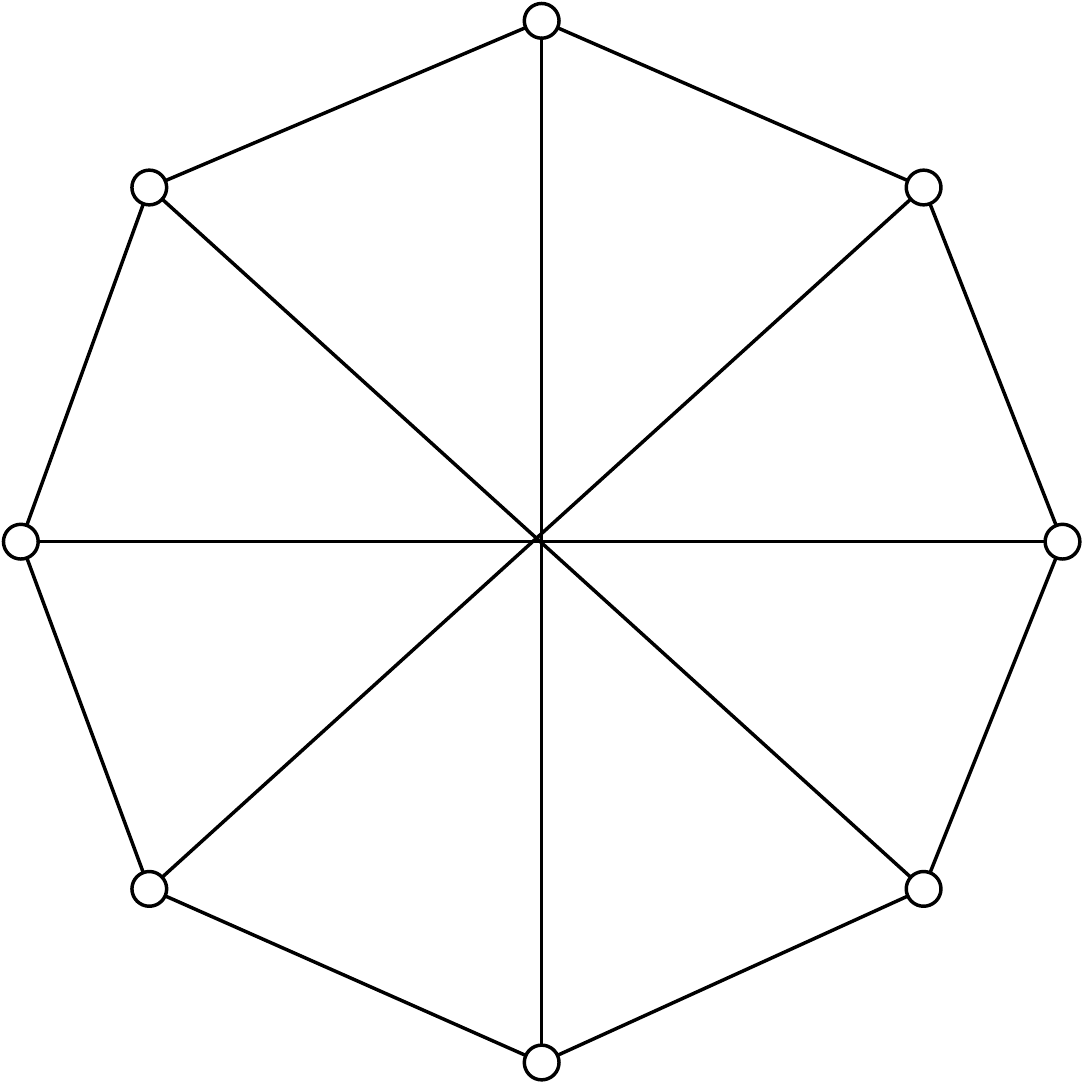}
\caption{The Wagner graph}
\label{fig:wagner}
\end{subfigure}
\caption{}
\end{figure}

As a consequence, an ideal $3$-wise intersecting clutter does exist:

\begin{proof}[Proof of Proposition~\ref{pete}]
Let $S$ be the cycle space of the Petersen graph, and let $\mathcal{C}:=\cuboid{S}$. By Remark~\ref{binaryclutter<->binaryspace}, $\mathcal{C}$ is a binary clutter. By Remark~\ref{Tutte}, the Petersen is a bridgeless graph that does not have $2$ cycles the union of which is the edge set, so by Remark~\ref{graph-cyclespace}, the points in $S$ do not agree on a coordinate, but every subset of $2+1=3$ points do. Moreover, $S$ is a cube-ideal set by Theorem~\ref{graph-cubeideal}. Therefore, by Remark~\ref{kwise-kwise} and Theorem~\ref{ideal-cuboids}, $\mathcal{C}$ is an ideal $3$-wise intersecting binary clutter, as required.
\end{proof}

The cuboid of the cycle space of the Petersen graph has already shown up in the literature, and is denoted $T_{30}$ by Schrijver~\cite{Schrijver03}, \S79.3e. Consider the graph obtained from Petersen by subdividing every edge once, and let $T$ be any vertex subset of even cardinality containing all the new vertices. Then the clutter of \emph{minimal $T$-joins} of this graph is precisely $T_{30}$. (This is left as a nice exercise for the reader.) This construction is due to Seymour (\cite{Seymour79b}, page 440).

\subsection{A primer on binary matroids}\label{sec:primer}

We follow Oxley~\cite{Oxley11}. Let $E$ be a finite set, $S\subseteq \{0,1\}^E$ a binary space, and $S^\perp$ the orthogonal complement of $S$, that is, $S^\perp=\{y\in \{0,1\}^E:y^\top x \equiv 0 \pmod{2} ~ \forall x\in S\}$. Notice that $S^\perp$ is another binary space, and that $(S^\perp)^\perp =S$. Therefore, there exists a $0-1$ matrix $A$ whose columns are labeled by $E$ such that $S = \left\{x\in \{0,1\}^E:Ax\equiv {\bf 0} \pmod{2}\right\}$, and $S^\perp$ is the row space of $A$ generated over $GF(2)$.

Let $\mathcal{S}:=\{C\subseteq E:\chi_C\in S\}$. The pair $M:=(E,\mathcal{S})$ is a {\it binary matroid}, and the matrix $A$ is a {\it representation of $M$}. We call $E$ the {\it ground set of $M$}, and denote it by $EM$. The sets in $\mathcal{S}$ are the {\it cycles of~$M$}, and $\mathcal{S}$ is the {\it cycle space of~$M$}, denoted by $\cycle{M}$. The minimal nonempty sets in $\mathcal{S}$ are the {\it circuits of~$M$}, and the circuits of cardinality one are {\it loops}.

Let $\mathcal{S}^\perp:=\{D\subseteq E:\chi_D\in S^\perp\}$. The binary matroid $M^\star:=(E,\mathcal{S}^\perp)$ is the {\it dual of $M$}. Notice that $(M^\star)^\star=M$. The sets in $\mathcal{S}^\perp$ are the {\it cocycles of~$M$}, and $\mathcal{S}^\perp$ is the {\it cocycle space of $M$}, denoted by $\cocycle{M}$. The minimal nonempty sets in $\mathcal{S}^\perp$ are the {\it cocircuits of~$M$}, and the cocircuits of cardinality one are {\it coloops of $M$}.

\begin{RE}\label{matroid-cyclespace}
Let $M$ be a binary matroid. Then the following statements hold: \begin{enumerate}
\item The points in $\cycle{M}$ agree on a coordinate if, and only if, $M$ has a coloop.
\item For all $k\in \mathbb N$, $\cycle{M}$ has a subset of at most $k+1$ points that do not agree on a coordinate if, and only if, $M$ has at most $k$ cycles the union of which is $EM$.
\end{enumerate}
\end{RE}

Let $G=(V,E)$ be a graph. The binary matroid whose cycle space is $\cycle{G}$ is a {\it graphic matroid}, and is denoted $M(G)$. Notice the one-to-one correspondence between the cycles of $M(G)$ and the cycles of $G$, between the loops of $M(G)$ and the loops of $G$, between the cocycles of $M(G)$ and the cuts of $G$, and between the coloops of $M(G)$ and the bridges of $G$.  Therefore, Remark~\ref{matroid-cyclespace} is an extension of Remark~\ref{graph-cyclespace} to all binary matroids. (The proofs of the two remarks are almost identical.)

Finally, let $M$ be a binary matroid. A pair of distinct elements $e,f\in EM$ are {\it parallel} if $\{e,f\}$ is a circuit of $M$. Given distinct elements $e,f,g \in EM$, if $e,f$ are parallel and $f,g$ are parallel, then so are $e,g$. A {\it parallel class} of $M$ is a maximal subset of $EM$ of pairwise parallel elements. The {\it simplification of $M$}, denoted $\si{M}$, is the binary matroid obtained from $M$ after deleting all loops, and for every parallel class, keeping one representative and deleting all the other elements. By construction, $\si{M}$ is a {\it simple} binary matroid, i.e. it has no circuit of cardinality at most two.

\subsection{The sums of circuits property}\label{sec:binary-matroids}

A binary matroid $M$ over ground set $E$ has the {\it sums of circuits property} if for each $w\in \mathbb{R}^E_+$ satisfying $$w(D-\{f\})\geq w_f \quad \text{ for every cocycle $D$ and $f\in D$,}$$ there exists an assignment $y_C\geq 0$ to every circuit $C$ such that $$w = \sum \left(y_C\cdot \chi_C: C \text{ is a circuit}\right).$$ This notion was first introduced by Seymour where he proved that graphic matroids have the sums of circuits property~\cite{Seymour79}. This matroid theoretic notion is relevant as it coincides with the notion of cube-idealness for binary spaces.

\begin{THM}[\cite{Barahona86,Abdi-cuboids}]\label{soc-cubeideal}
A binary matroid has the sums of circuits property if, and only if, the corresponding cycle space is a cube-ideal set.
\end{THM}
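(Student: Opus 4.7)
The plan is to proceed via LP duality, using the convex hull $\conv(S)$ of $S := \cycle{M}$ as the intermediate polytope. Introduce two cones: the circuit cone $K_1 := \{\sum y_C \chi_C : y_C \geq 0, \, C \text{ a circuit of } M\}$, and the cocycle cone $K_2 := \{w \in \mathbb{R}^E_+ : w(D-f) \geq w_f \text{ for all cocycles } D \text{ and all } f \in D\}$. The sums of circuits property is precisely the equality $K_1 = K_2$. The inclusion $K_1 \subseteq K_2$ is automatic, since for every circuit $C$ and every cocycle $D$ the intersection $|C \cap D|$ is even, giving $\chi_C \in K_2$. Only the reverse inclusion has substance.

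The central step is to identify the non-box facets of $\conv(S)$. For every cocircuit $D$ of $M$ and every odd-cardinality $J \subseteq D$, the generalized set covering inequality
$$\sum_{e \in D \setminus J} x_e + \sum_{e \in J}(1 - x_e) \geq 1$$
is valid for $\conv(S)$: if a cycle $C$ violated it, then $J \subseteq C$ and $C \cap (D \setminus J) = \emptyset$, forcing $|C \cap D| = |J|$ to be odd, contradicting parity. Call these the \emph{cocircuit GSC inequalities}, and let $P \subseteq [0,1]^E$ be the polytope they cut out together with the box constraints. Clearly $\conv(S) \subseteq P$, and the integer points of $P$ are precisely the cycles of $M$, so $P \cap \{0,1\}^E = S$. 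I would then argue that $S$ is cube-ideal if and only if $P = \conv(S)$. The reverse direction is immediate. The forward direction requires showing that every non-box facet of $\conv(S)$ has the form of a cocircuit GSC, which is obtained by analyzing the minimal support of the coefficient vector of a facet-defining inequality and using the binary-space structure of $S$: any support that is not a cocircuit either strictly contains one (allowing a reduction) or decomposes as a disjoint union of cocircuits (so the inequality is implied by those).

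Finally, I would use LP duality to bridge $P = \conv(S)$ and the sums of circuits property. Given $w \in K_2$, the dual LP of $\max\{w^\top x : x \in P\}$ assigns nonnegative multipliers to cocircuit GSC constraints and box constraints; the cocycle condition on $w$ translates directly into feasibility of this dual. Assuming $P = \conv(S)$, the LP has an integer primal optimum $\chi_{C^\star}$, and unpacking the dual multipliers via complementary slackness produces a decomposition $w = \sum y_C \chi_C$ into a nonnegative combination of circuit characteristic vectors — this is exactly the sums of circuits property. Conversely, that property combined with Farkas' lemma forces every extreme point of $P$ to lie in $\conv(S)$, yielding $P = \conv(S)$.

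The main obstacle will be the structural lemma in the middle paragraph — showing that every non-box facet of $\conv(S)$ for a binary space $S$ must be a cocircuit GSC inequality. This requires careful support analysis for valid inequalities over binary spaces and is the technical heart of the theorem; once it is in place, both implications reduce to standard LP-duality manipulations.
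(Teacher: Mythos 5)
This theorem is not proved in the paper; it is cited to Barahona--Gr\"otschel and to the cuboids paper, so your argument stands as a from-scratch attempt. The overall architecture is the right one, and several pieces are correct: the cocircuit GSC inequalities are valid (by the even-intersection property), $P\cap\{0,1\}^E=\cycle{M}$, and the inclusion $K_1\subseteq K_2$ is automatic. But two steps have genuine gaps. First, your dichotomy in the structural lemma --- ``strictly contains a cocircuit (allowing a reduction) or decomposes as a disjoint union of cocircuits'' --- is neither exhaustive nor does it deliver the reduction by itself. What actually makes the reduction work is a parity argument: a GSC with support $F$ and flip set $J$ is valid iff $\chi_J$ is not in the projection of $\cycle{M}$ onto coordinates $F$, which by orthogonality happens iff some cocycle $D\subseteq F$ has $|D\cap J|$ odd; decomposing $D$ into disjoint cocircuits then produces a cocircuit $D'\subseteq F$ with $|D'\cap J|$ odd, and it is the cocircuit GSC on $(D',\,J\cap D')$ together with box inequalities that dominates the original. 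Merely knowing $F$ contains a cocircuit $D'$ is not enough, because one needs $|J\cap D'|$ odd for the dominating inequality to be a valid cocircuit GSC, and not every cocircuit inside $F$ has that property. Your sketch omits the coindependent case and, more importantly, omits the parity selection that the reduction hinges on.

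Second, the ``LP duality bridge'' is not correct as stated. The direction $\conv(\cycle{M})=P\Rightarrow K_1=K_2$ does hold, but not by unpacking complementary slackness from $\max\{w^\top x : x\in P\}$: the dual multipliers there live on GSC and box constraints, not on circuits, and do not produce a decomposition $w=\sum y_C\chi_C$. The correct mechanism is homogenization: since ${\bf 0}\in\cycle{M}$, the conic hull of $\conv(\cycle{M})$ is exactly $K_1$, while eliminating the homogenizing variable from the inequality description of $P$ (the GSCs with $|J|\geq 3$ contribute only lower bounds on the homogenizing variable, so they drop out) leaves exactly $\{x\geq 0,\ x(D\setminus f)\geq x_f\ \text{for cocircuits } D,\ f\in D\}=K_2$. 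The converse direction, $K_1=K_2\Rightarrow\conv(\cycle{M})=P$ (equivalently, SOCP implies $P$ is integral), is the genuinely hard half of Barahona--Gr\"otschel's theorem, and ``Farkas' lemma forces every extreme point of $P$ to lie in $\conv(S)$'' is an assertion, not an argument: equality of conic hulls does not imply equality of the underlying polytopes, and a vertex of $P$ lying in $K_1$ gives a nonnegative circuit decomposition but no reason for the combination to be convex. This direction needs a substantive additional argument (Barahona--Gr\"otschel use an inductive/lifting argument over faces), which is missing.
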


Seymour proved a decomposition theorem for binary matroids with the sums of circuits property~\cite{Seymour81}. It turns out they can all be produced from graphic matroids and two other matroids, which we now describe.  The {\it Fano matroid $F_7$} is the binary matroid represented by the matrix in Figure~\ref{fig:fano}. The second matroid is $M(V_8)^\star$, where $V_8$ is the graph in Figure~\ref{fig:wagner}. Seymour showed that $F_7$ and $M(V_8)^\star$ both have the sums of circuits property~\cite{Seymour81}. 

To generate all binary matroids with the sums of circuits property, we require three composition rules. Let $M_1,M_2$ be binary matroids over ground sets $E_1,E_2$, respectively. We denote by $M_1\tr M_2$ the binary matroid over ground set $E_1\tr E_2$ whose cycles are all subsets of $E_1\tr E_2$ of the form $C_1\tr C_2$, where $C_i$ is a cycle of $M_i$ for $i \in [2]$. Then $M_1\tr M_2$ is a {\it $1$-sum} if $E_1\cap E_2= \emptyset$; $M_1\tr M_2$ is a {\it $2$-sum} if $E_1\cap E_2=\{e\}$, where $e$ is neither a loop nor a coloop of $M_1$ or $M_2$; and $M_1\tr M_2$ is a {\it $Y$-sum} if $E_1\cap E_2$ is a cocircuit of cardinality $3$ in both $M_1$ and $M_2$ and contains no circuit in $M_1$ or $M_2$.

\begin{THM}[\cite{Seymour81}, (6.4), (6.7), (6.10) and (16.4)]\label{seymour-soc}
Let $M$ be a binary matroid with the sums of circuits property. Then $M$ is obtained recursively by means of $1$-sums, $2$-sums and $Y$-sums starting from copies of $F_7,M(V_8)^\star$ and graphic matroids.
\end{THM}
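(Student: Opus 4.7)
The plan is to prove this decomposition result in two directions: first, an easier direction showing that the three base classes have the SOC property and that this property is preserved under the three sum operations; second, a harder structural decomposition of an arbitrary binary matroid with SOC into these base pieces.

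For the easier direction, graphic matroids having SOC is Seymour's earlier theorem. For $F_7$ and $M(V_8)^\star$, one verifies SOC by checking cube-idealness of the corresponding cycle spaces via Theorem~\ref{soc-cubeideal}, which amounts to a finite polyhedral computation. Closure under $1$-sums is immediate, since the ground sets are disjoint and the cycle space decomposes as a direct product. Closure under $2$-sums and $Y$-sums is more delicate: given a weight vector $w$ on the ground set of $M_1 \tr M_2$ satisfying the cocycle metric inequality, one must split $w$ into $w_1,w_2$ on $E_1$ and $E_2$ that each satisfy the metric inequality on $M_i$, apply SOC to each summand, and then recombine the resulting circuit assignments, using the fact that the shared elements form a parallel pair (for $2$-sums) or a cocircuit of size three (for $Y$-sums) to guarantee that the combined assignment is a nonnegative integer combination of circuits of $M_1 \tr M_2$.

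For the harder direction, the natural approach is via excluded minors. The SOC property is closed under minors (a consequence of Theorem~\ref{soc-cubeideal} together with the minor-closedness of cube-idealness), so one first characterizes the minor-minimal binary matroids lacking SOC. Then one invokes a Seymour-style decomposition theorem: any binary matroid avoiding this list admits a $k$-separation for some $k \in \{1,2,3\}$ that yields a decomposition via $1$-sums, $2$-sums, or $Y$-sums into proper minors, and induction terminates at the base classes. The main obstacle lies entirely in this second direction. It requires extending Seymour's regular matroid decomposition machinery to the SOC class, which now includes the non-regular matroid $F_7$ as a building block and features $M(V_8)^\star$ as the sporadic component analogous to $R_{10}$ in the regular case. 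The hardest technical tasks are pinning down the precise excluded-minor list, handling all cases of $3$-separations (which motivates the $Y$-sum operation instead of the $3$-sum used for regular matroids), and carefully controlling connectivity so that the decomposition terminates exactly on the base classes rather than on some unforeseen exceptional matroid.
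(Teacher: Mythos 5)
This result is not proved in the paper: it is a verbatim citation of four numbered items from Seymour's 1981 paper and is used as a black box. There is therefore no in-paper argument against which to compare your attempt, and the authors' decision to cite rather than reprove is appropriate --- what you are being asked to supply here is one of the deeper structure theorems for binary matroids.

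Your sketch does identify the correct architecture of Seymour's actual proof: SOC is minor-closed (which indeed follows from Theorem~\ref{soc-cubeideal} together with minor-closedness of cube-idealness), one characterizes the excluded minors for SOC among binary matroids, and one then runs splitter-theorem-style decomposition machinery, with $M(V_8)^\star$ playing the sporadic role analogous to $R_{10}$ and the $Y$-sum (gluing along a common cotriangle with no circuit inside it) replacing the $3$-sum of the regular-matroid case. Two caveats, however. First, your ``easier direction'' is the converse of the stated theorem: Theorem~\ref{seymour-soc} asserts only that matroids with SOC decompose into the base classes, not that the base classes have SOC nor that SOC is closed under the three sum operations. (The converse is also true and is due to Seymour, but it is not the statement being cited, and the paper never relies on it.) Second, your ``harder direction'' is a description of what a proof would have to do, not a proof: identifying the excluded-minor list, reducing to suitably connected pieces, exhausting the $3$-separation cases, and establishing the splitter-type recursion each take substantial work and together occupy most of Seymour's paper. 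Naming these tasks and calling them the ``main obstacle'' is an accurate assessment, but it means the proposal is a roadmap to the literature rather than an argument; it would be a serious gap to present it as if these steps were routine.
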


We are now ready to extend Theorem~\ref{jaeger} to all binary matroids with the sums of circuits property, as follows. (The second equivalent statement follows from Remark~\ref{matroid-cyclespace} and Theorem~\ref{soc-cubeideal}.)

\begin{THM}\label{soc-3cyclecover}
Every binary matroid without a coloop and with the sums of circuits property has at most $3$ cycles the union of which is the ground set. That is, given a cube-ideal binary space, either all the points agree on a coordinate, or there is a subset of at most $4$ points that do not agree on a coordinate.
\end{THM}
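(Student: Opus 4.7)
My plan is to invoke Seymour's decomposition theorem (Theorem~\ref{seymour-soc}) and reduce the statement to three base cases, combining Jaeger's $8$-flow theorem with direct verification for the sporadic pieces. Let $M$ be a coloopless binary matroid with the sums of circuits property. By Theorem~\ref{seymour-soc}, $M$ is built via iterated $1$-, $2$-, and $Y$-sums from copies of $F_7$, $M(V_8)^\star$, and graphic matroids, and I would prove the claim by induction on the number of pieces in such a decomposition.

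The base cases come in three flavors. When $M$ is graphic, the claim is precisely Jaeger's $8$-flow theorem (Theorem~\ref{jaeger}), once translated via Remark~\ref{matroid-cyclespace}. When $M = F_7$, the matroid is manifestly coloopless (every element lies on multiple lines), and one exhibits by direct inspection three lines of the Fano plane whose union is all seven points. When $M = M(V_8)^\star$, the circuits are the minimal cuts of the Wagner graph; since $V_8$ is $3$-edge-connected, $M(V_8)^\star$ has no coloop, and a short case analysis produces three minimal cuts of $V_8$ covering all eight edges.

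For the inductive step, write $M = M_1 \tr M_2$ as a $1$-, $2$-, or $Y$-sum. A coloop of $M_i$ would persist as a coloop of $M$, so each $M_i$ is coloopless; it is also a binary matroid with the sums of circuits property. By induction, each $M_i$ admits a $3$-cycle cover $\{A_i, B_i, D_i\}$. In the $1$-sum case ($E_1 \cap E_2 = \emptyset$) one takes $\{A_1 \cup A_2, B_1 \cup B_2, D_1 \cup D_2\}$ directly. In the $2$-sum case across a common element $e$, cycles of $M$ are of the form $C_1 \tr C_2$ with $e \in C_1 \Leftrightarrow e \in C_2$, so the two covers must be paired up so that $e$-parities agree in each pair; the three resulting symmetric differences then cover $E_1 \tr E_2$. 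In the $Y$-sum case across the $3$-cocircuit $\{a,b,c\}$, each cycle of $M_i$ meets $\{a,b,c\}$ in $0$ or $2$ elements (cycles being orthogonal to cocircuits), and the pairing must satisfy $C_1 \cap \{a,b,c\} = C_2 \cap \{a,b,c\}$ in each pair.

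The main obstacle is the parity-matching required in the $2$-sum and $Y$-sum cases: for arbitrarily chosen inductive covers, the parities at $e$ (or the intersections with $\{a,b,c\}$) need not align. To surmount it, I would strengthen the inductive hypothesis to assert, for each coloopless SOC matroid $M_i$ and each element $e$ (respectively each $3$-cocircuit), that a $3$-cycle cover exists with a prescribed $e$-parity profile (resp.\ prescribed intersection profile). The required flexibility is powered by coloopless-ness: since $e$ lies on some cycle $P$, replacing a cover cycle $A_i$ by $A_i \tr P$ toggles its parity at $e$, and one must argue that $P$ may be chosen to preserve coverage---most plausibly by choosing $P$ to be a short circuit through $e$ and checking this property propagates through $1$-, $2$-, and $Y$-sums from the three base classes, where it can be verified by hand. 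Making this flexibility precise, and showing it is preserved by the three sum operations, is where I expect the real work to lie.
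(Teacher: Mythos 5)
Your high-level plan matches the paper's: induct over the decomposition guaranteed by Theorem~\ref{seymour-soc}, handle graphic pieces via Jaeger (Theorem~\ref{jaeger}), exhibit explicit $3$-cycle covers for $F_7$ and $M(V_8)^\star$, note that coloops would persist through the sums, and recombine covers across $1$-, $2$-, and $Y$-sums after aligning intersection patterns with the common element or triad. You have also correctly put your finger on the crux: for $2$- and $Y$-sums, the inductive covers must be made to agree on $E_1\cap E_2$ before taking symmetric differences. But the fix you propose --- strengthening the induction to produce covers with a prescribed parity profile, by toggling a cover cycle with some cycle $P$ through $e$ --- has a real gap, and you essentially admit it: if $A$ is a cover cycle and $P$ is an arbitrary cycle through $e$, then $A\tr P$ can drop elements of $A\cap P$ from the union, so $\{A\tr P,B,D\}$ need not cover. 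Choosing $P$ ``short'' does not help; a triangle through $e$ could still contain two elements covered only by $A$. Carrying a strengthened ``prescribed-profile'' hypothesis through the sums would be substantial extra machinery, and as stated there is no argument that it can be done.

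The paper sidesteps all of this with one elementary observation that removes the need for any auxiliary $P$ or strengthened hypothesis: if $\{C_1,C_2,C_3\}$ is a $3$-cycle cover, replacing $C_j$ (for $j\ge 2$) by its symmetric difference with a subset of $\{C_1,\dots,C_{j-1}\}$ preserves the union, since $X\cup(X\tr Y)=X\cup Y$. Concretely, in a $2$-sum the common element $e$ lies in some cover cycle of each $M_i$ (the three cycles cover $E_i\ni e$), say $C_1^i$; then replacing $C_2^i$ by $C_1^i\tr C_2^i$ and $C_3^i$ by $C_1^i\tr C_3^i$ when needed forces $e\notin C_2^i,C_3^i$ without losing coverage, after which $\{C_j^1\tr C_j^2 : j\in[3]\}$ works. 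The $Y$-sum case is handled by the same trick together with the parity constraint $|C\cap\{e,f,g\}|\in\{0,2\}$ and a relabeling of $e,f,g$. In short, the ``cycle $P$'' you are looking for can always be taken to be one of the existing cover cycles, which is exactly what makes the coverage-preservation automatic; once you see this, the plain inductive hypothesis ``$M_i$ has a $3$-cycle cover'' suffices and the proof closes.
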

\begin{proof}
A {\it $3$-cycle cover} of a binary matroid is $3$ (not necessarily distinct) cycles whose union is the ground set. 

\setcounter{claim_nb}{0} 

\begin{CLM} 
Both $F_7$ and $M(V_8)^\star$ have $3$-cycle covers.
\end{CLM}
\begin{cproof}
Given the matrix representation of $F_7$ in Figure~\ref{fig:pete}, label the columns $1,\ldots,7$ from left to right. Then  $\emptyset,\{1,2,3,7\},\{4,5,6\}$ is a $3$-cycle cover of $F_7$. Next, label the vertices of $V_8$ so that the outer $8$-cycle is labelled $1,\dots,8$.  Then $M(V_8)^\star$ has a $3$-cycle cover given by the following cuts of $V_8$: $\delta(\{1,6,7,8\}),$ $\delta(\{1,7\}),\delta(\{2,4\})$, where $\delta(X)$ is the set of edges with exactly one end in $X$.
\end{cproof}


\begin{CLM} 
Let $M,M_1,M_2$ be binary matroids such that $M=M_1\tr M_2$ and $M_i,i \in [2]$ has a $3$-cycle cover. Then the following statements hold:\begin{enumerate}
\item If $M$ is a $1$-sum of $M_1,M_2$, then $M$ has a $3$-cycle cover.
\item If $M$ is a $2$-sum of $M_1,M_2$, then $M$ has a $3$-cycle cover.
\item If $M$ is a $Y$-sum of $M_1,M_2$, then $M$ has a $3$-cycle cover.
\end{enumerate}
\end{CLM}
\begin{cproof}
For $i\in [2]$, let $E_i$ be the ground set of $M_i$ and $C_1^i,C_2^i,C_3^i$ be a $3$-cycle cover of $M_i$.
Clearly, {\bf (1)} holds.
For {\bf (2)}, let $E_1\cap E_2=\{e\}$. We may assume $e \in C_1^i$ for all $i \in [2]$. By replacing $C_2^i$ by $C_1^i\tr C_2^i$ if necessary, we may assume $e \notin C_2^i$ for all $i \in [2]$. Similarly, we may assume $e \notin C_3^i$ for $i \in [2]$. But now $\{C_j^1\tr C_j^2:j\in [3]\}$ is a $3$-cycle cover of $M$. 
For {\bf (3)}, suppose $E_1\cap E_2=\{e,f,g\}$. Since $\{e,f,g\}$ is a cocircuit of both $M_1,M_2$, and since cocircuits and circuits of a binary matroid have an even number of elements in common, $|C_j^i\cap \{e,f,g\}|\in \{0,2\}$ for all $i,j$. Therefore, after possibly relabeling $e,f,g$ simultaneously in $M_1$ and $M_2$, and after possibly relabeling $C_1^i,C_2^i,C_3^i$ for all $i$, we may assume that \begin{itemize}
\item $C_1^i\cap \{e,f,g\}=\{e,f\}$ for all $i \in [2]$, and
\item $C_2^i\cap \{e,f,g\}=\{e,g\}$ or $\{f,g\}$ for all $i\in [2]$.
\end{itemize}
For  $i \in [2]$, after possibly replacing $C_2^i$ with $C_2^i\tr C_1^i$, we may assume $C_2^i\cap \{e,f,g\}=\{e,g\}$.
For  $i \in [2]$, after possibly replacing $C_3^i$ with $C_3^i\tr C_1^i,C_3^i\tr C_2^i$ or $C_3^i\tr C_1^i\tr C_2^i$, we may assume $C_3^i\cap \{e,f,g\}=\emptyset$. But now $\{C_j^1\tr C_j^2:j\in [3]\}$ is a $3$-cycle cover of $M$, as required. 
\end{cproof}

We leave the proof of the following claim as an easy exercise for the reader.  

\begin{CLM} 
Let $M,M_1,M_2$ be binary matroids such that $M=M_1\tr M_2$, where $\tr$ is either a $1$-, $2$- or $Y$-sum. If $M$ has no coloop, then neither do $M_1,M_2$. 
\end{CLM}

The proof is completed by combining the above claims with Theorems~\ref{jaeger} and~\ref{seymour-soc}.  
\end{proof}

\subsection{Proof of Theorem~\ref{main}}\label{sec:main-proof}

It is well-known that a clutter is binary if, and only if, every member and every minimal cover have an odd number of elements in common~\cite{Lehman64}. We use this characterization to prove the following.

\begin{PR}\label{binaryclutter->cuboid}
Let $\mathcal{C}$ be a binary tangled clutter. Then $\mathcal{C}$ is a duplication of a cuboid.
\end{PR}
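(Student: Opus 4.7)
The plan is to exploit the binary hypothesis to pin down exactly how members interact with minimum covers, and then read off the cuboid structure from a natural auxiliary graph on the ground set.

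First I would set up the graph $G = (V, E)$ whose vertex set is the ground set $V$ of $\mathcal{C}$ and whose edges are the minimum covers of $\mathcal{C}$, mirroring the construction used earlier for $\core{\mathcal{C}}$. Because $\mathcal{C}$ is tangled, $\tau(\mathcal{C})=2$ and every element belongs to a minimum cover, so every vertex of $G$ has degree at least one. The crucial new ingredient is that $\mathcal{C}$ is binary: by the characterization recalled at the start of \S\ref{sec:main-proof}, every member and every minimal cover meet in an odd number of elements. Applied to a minimum cover $\{u,v\}$ of cardinality two, this forces $|C \cap \{u,v\}| = 1$ for every edge $\{u,v\} \in E$ and every member $C \in \mathcal{C}$.

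From this single identity the rest falls out. Fix any member $C \in \mathcal{C}$; then $\{C, V - C\}$ witnesses a proper $2$-colouring of $G$, so $G$ is bipartite. Moreover, if $\{u,v\}$ and $\{u,w\}$ are two edges sharing an endpoint $u$, then for every $C \in \mathcal{C}$ the identity $|C \cap \{u,v\}| = |C \cap \{u,w\}| = 1$ forces $v \in C \iff w \in C$. Propagating this along paths in $G$, all elements in one side of the bipartition of a connected component of $G$ are duplicates in $\mathcal{C}$ (they lie in exactly the same members), and no member of $\mathcal{C}$ contains elements from both sides of that bipartition.

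With this structural information in hand, let $\{U_1,V_1\},\ldots,\{U_r,V_r\}$ be the bipartitions of the connected components of $G$. For each $C \in \mathcal{C}$ define $p_C \in \{0,1\}^r$ by setting $(p_C)_i = 1$ if $U_i \subseteq C$ and $(p_C)_i = 0$ if $V_i \subseteq C$ (these are the only two options, by the previous paragraph, and $U_i\cup V_i$ covers every vertex of the $i$-th component so $C$ is fully described by $p_C$). Let $S := \{p_C : C \in \mathcal{C}\} \subseteq \{0,1\}^r$. Picking one representative from each $U_i$ and each $V_i$ yields a ground set of size $2r$ on which the restriction of $\mathcal{C}$ is precisely $\cuboid{S}$, and the elements outside this transversal are duplicates of the chosen representatives. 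Hence $\mathcal{C}$ is a duplication of $\cuboid{S}$, as required.

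The only step that is not immediate is the derivation that $|C \cap e|=1$ for every edge $e$ and every member $C$; once this is in place, bipartiteness, the duplication structure inside each component, and the cuboid description are routine. I do not expect any genuine obstacle, since the binary + tangled combination is tailor-made to produce this identity.
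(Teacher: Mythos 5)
Your proof is correct and follows essentially the same approach as the paper: extract $|C\cap\{u,v\}|=1$ from the odd-intersection characterization of binary clutters applied to minimum covers, then observe that this forces duplicate elements and a cuboid structure. The paper states this more tersely (without explicitly invoking the bipartite graph $G$, though that graph is used nearby for $\core{\mathcal{C}}$), while you spell out the component-wise bipartition argument in full; the two are the same argument at different levels of detail.
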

\begin{proof}
If $\{e,f\}$ is a minimum cover, then for each $C\in \mathcal{C}$, $|C\cap \{e,f\}|$ must be odd and therefore $1$, since $\mathcal{C}$ is a binary clutter. As a result, if $\{e,f\},\{e,g\}$ are both minimum covers, then $f,g$ must be duplicates. Moreover, if every element is contained in exactly one minimum cover, then $\mathcal{C}$ must be a cuboid. These two observations, along with the fact that $\mathcal{C}$ is a tangled clutter, imply that $\mathcal{C}$ is a duplication of a cuboid.
\end{proof}

Every minor of a binary clutter is also a binary clutter~\cite{Seymour76}. We are now equipped to prove the main result of this paper, stating that every $4$-wise intersecting binary clutter is non-ideal.

\begin{proof}[Proof of Theorem~\ref{main}]
We prove the contrapositive statement. Let $\mathcal{C}$ be an ideal binary clutter such that $\tau(\mathcal{C}) \geq 2$. We need to exhibit $\leq 4$ members without a common element. Let $\mathcal{C}'$ be a deletion minor of $\mathcal{C}$ that is minimal subject to $\tau(\mathcal{C}')\geq 2$. It suffices to exhibit $\leq 4$ members of $\mathcal{C}'$ without a common element. Notice that $\mathcal{C}'$ is ideal, and as a minor of a binary clutter, it is also binary. Moreover, by our minimality assumption, $\mathcal{C}'$ is a tangled clutter. Thus, by Proposition~\ref{binaryclutter->cuboid}, $\mathcal{C}'$ is a duplication of a cuboid, say $\cuboid{S}$ where we may choose $S$ so that ${\bf 0}\in S$. It suffices to exhibit $\leq 4$ members of $\cuboid{S}$ without a common element.

Note that $\cuboid{S}$ is an ideal binary cuboid with $\tau(\cuboid{S}) \geq 2$. So, by Theorem~\ref{ideal-cuboids} and Remark~\ref{binaryclutter<->binaryspace}, $S$ is a cube-ideal binary space whose points do not agree on a coordinate.
By Theorem~\ref{soc-3cyclecover}, $S$ has $\leq 4$ points that do not agree on a coordinate, thereby yielding $\leq 4$ members of $\cuboid{S}$ without a common element, as required.
\end{proof}


\section{Projective geometries over the two-element field}\label{sec:tackle}

In this section, we give an important example of $k$-wise intersecting clutters, coming from projective geometries over the two-element field. We also propose a strengthening of Conjecture~\ref{main-con}. Using Theorem~\ref{main}, we prove our stronger conjecture for the class of binary clutters.

Conjecture~\ref{main-con} predicts that for some $k\geq 4$, every ideal clutter with covering number at least two has $k$ members without a common element. By moving to a deletion minor, if necessary, we may assume that our ideal clutter is tangled. Roughly speaking, our stronger conjecture predicts that the tangled deletion minor must actually have $2^{k-1}$ members that come from a projective geometry, and of these members, $k$ many will not have a common element.

Take an integer $\ell\geq 1$. Let $A$ be the $\ell\times (2^\ell-1)$ matrix whose columns are all the nonzero vectors in $\{0,1\}^\ell$. The binary matroid represented by $A$ is called a {\it projective geometry over $GF(2)$}, and is denoted $PG(\ell-1,2)$. See Figure~\ref{fig:PG-reps} for representations of the first three projective geometries. Note that $PG(1,2)$ is the graphic matroid of a triangle, while $PG(2,2)$ is the Fano matroid $F_7$. \begin{figure}
\centering
$$
\begin{pmatrix}
1
\end{pmatrix}\qquad
\begin{pmatrix}
1&0&1\\
0&1&1
\end{pmatrix}\qquad
\begin{pmatrix}
1&0&0&0&1&1&1\\
0&1&0&1&0&1&1\\
0&0&1&1&1&0&1
\end{pmatrix}
$$
\caption{Representations of $PG(0,2),PG(1,2),PG(2,2)$, from left to right.}
\label{fig:PG-reps}
\end{figure}

Projective geometries are relevant as they give rise to an important class of $\ell$-wise intersecting clutters, as we see below. Given integers $n,m\geq 1$ and some points $a_1,\ldots,a_m\in \{0,1\}^n$, denote by $\langle a_1,\ldots,a_m\rangle$ the vector space over $GF(2)$ generated by the points $a_1,\ldots,a_m$.

\begin{PR}\label{PG}
Take an integer $\ell\geq 1$. Then the following statements hold: \begin{enumerate}
\item $PG(\ell-1,2)$ has rank $\ell$.
\item $PG(\ell-1,2)$ has exactly $2^\ell$ cocycles.
\item $PG(\ell-1,2)$ has a unique representation, up to permuting rows and columns.
\item Every nonempty cocycle of $PG(\ell-1,2)$ has cardinality $2^{\ell-1}$.
\end{enumerate} Moreover, for $S:=\cocycle{PG(\ell-1,2)}$, the following statements hold: \begin{enumerate}
\item[5.] 
$\cuboid{S}$ has $\ell+1$ members without a common element.
\item[6.] 
$\cuboid{S}$ is an $\ell$-wise intersecting clutter.
\end{enumerate}
\end{PR}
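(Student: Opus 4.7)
The plan is to work throughout with the identification $S = \cocycle{PG(\ell-1,2)} = \{v^\top A : v \in GF(2)^\ell\} \subseteq \{0,1\}^{2^\ell-1}$, so that the $j$-th coordinate of the cocycle indexed by $v$ equals $v^\top a_j$, where $a_j$ is the $j$-th column of $A$. The crucial feature I will exploit repeatedly is that the columns of $A$ are \emph{exactly} the nonzero vectors of $GF(2)^\ell$, which lets me convert any linear-algebraic statement ``there exists a nonzero $w \in GF(2)^\ell$ with property $P$'' into the coordinate statement ``there exists a coordinate $j$ such that $a_j$ has property $P$.''

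Parts (1)--(4) are routine. The matrix $A$ contains the $\ell$ standard basis vectors as columns, so its rank is $\ell$, giving (1); since the cocycle space equals the row space of $A$, part (2) follows with $|S| = 2^\ell$. For (3), any other representation of $PG(\ell-1,2)$ is obtained from $A$ by elementary row operations followed by a column permutation, and since in either matrix the columns must be all nonzero vectors of $GF(2)^\ell$, the row operations merely relabel this fixed set of columns. For (4), given nonzero $v \in GF(2)^\ell$, exactly $2^{\ell-1}$ of the $2^\ell$ vectors $x \in GF(2)^\ell$ satisfy $v^\top x = 1$, and $x = \mathbf{0}$ is not one of them, so the nonempty cocycle $v^\top A$ has support of size $2^{\ell-1}$.

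For part (5), I would exhibit the $\ell+1$ points $\mathbf{0}, r_1, \ldots, r_\ell$ in $S$, where $r_i = e_i^\top A$ is the $i$-th row of $A$. For any coordinate $j$, the column $a_j$ is nonzero, so $(a_j)_i = 1$ for some $i$, and then $r_i$ has value $1$ at coordinate $j$ while $\mathbf{0}$ has value $0$ there; hence the $\ell+1$ points do not agree at coordinate $j$. Remark~\ref{kwise-kwise} then converts this into $\ell+1$ members of $\cuboid{S}$ with no common element.

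Part (6) is the step that needs the cleanest argument; in view of Remark~\ref{kwise-kwise} and part (5), it suffices to show that any $\ell$ points $p_1 = v_1^\top A, \ldots, p_\ell = v_\ell^\top A$ in $S$ agree on some coordinate. The trick is to consider the linear map $\phi : GF(2)^\ell \to GF(2)^\ell$ defined by $\phi(w) = (v_1^\top w, \ldots, v_\ell^\top w)$, and split on whether $\phi$ is bijective. If $\phi$ is not injective, pick any nonzero $w \in \ker \phi$; since $w$ is a column $a_j$ of $A$, all $p_i$ take value $0$ at coordinate $j$. If $\phi$ is bijective, pick the unique $w$ with $\phi(w) = \mathbf{1}$; this $w$ is nonzero (as $\phi(\mathbf{0}) = \mathbf{0}$) and hence equals some column $a_j$, at which all $p_i$ take value $1$. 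Either way the $\ell$ points agree on some coordinate, completing the proof. The only real obstacle is spotting this injective/non-injective dichotomy --- once one notices that the all-nonzero-columns property of $A$ makes every nonzero $w$ visible as some coordinate, the rest is immediate.
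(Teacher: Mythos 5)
Your proposal is correct, and parts (1)--(5) match the paper's argument. In part (6) you and the paper make the same underlying case split on the rank of $\{v_1,\dots,v_\ell\}$, but the packaging differs: the paper invokes the uniqueness of the representation (part (3)) to ``view the $\ell$ points as rows of $A$'' and then counts coordinates on which those rows vanish or equal one, whereas you introduce the linear map $\phi(w)=(v_1^\top w,\dots,v_\ell^\top w)$ and split on whether $\phi$ is bijective, reading off the agreement coordinate as either a nonzero kernel element or the preimage of $\mathbf 1$. Your version avoids the appeal to (3), constructs the agreement coordinate directly, and exploits in a single clean step the fact that every nonzero vector of $GF(2)^\ell$ is a column of $A$; the paper's version is slightly more concrete in that it tells you exactly how many coordinates the points agree on ($2^{\ell-r}-1$ when $r<\ell$, or exactly one when $r=\ell$), a refinement you do not need but which the paper states. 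Both proofs are valid and the two dichotomies (your $\phi$ bijective vs.\ not, the paper's $r=\ell$ vs.\ $r<\ell$) are in fact the same condition.
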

\begin{proof}
Let $A$ be the representation of $PG(\ell-1,2)$, i.e. the $\ell\times (2^\ell-1)$ matrix whose columns are all the nonzero vectors in $\{0,1\}^\ell$. {\bf (1)} and {\bf (2)} follow from the facts that the rows of $A$ are linearly dependent over $GF(2)$, and that $\cocycle{PG(\ell-1,2)}$ is the row space of $A$ generated over $GF(2)$. 
{\bf (3)} follows from {\bf (1)} and the fact that $PG(\ell-1,2)$ has $2^\ell-1$ elements, has no loop, and has no parallel elements. 
{\bf (4)} 
Every nonempty cocycle can be viewed as the first row of $A$, the unique representation of $PG(\ell-1,2)$, and since the rows of $A$ have precisely $2^{\ell-1}$ ones, (4) follows. 
{\bf (5)}
Note that the $\ell$ points in $S$ corresponding to the rows of $A$ agree on precisely one coordinate, which is set to $1$. These $\ell$ points, together with the zero point ${\bf 0}$, yield $\ell+1$ points that do not agree on a coordinate.
{\bf (6)} 
Let $a_1,\ldots,a_{\ell}$ be some points in $S$. Let $r$ be the rank of $\langle a_1,\ldots,a_{\ell}\rangle$. Clearly, $0\leq r\leq \ell$. If $r=\ell$, then by (3), the $\ell$ points may be viewed as the $\ell$ rows of $A$, so they agree on precisely one coordinate, which is set to $1$. Otherwise, $r<\ell$. We may assume that $\{a_1,\ldots,a_r\}$ form a basis for $\langle a_1,\ldots,a_{\ell}\rangle$. Then, by (3), $\{a_1,\ldots,a_r\}$ may be viewed as $r$ distinct rows of $A$, implying that they agree on $2^{\ell-r}-1$ coordinates set to $0$, implying in turn that all the points in $\{a_1,\ldots,a_{\ell}\}$ agree on $2^{\ell-r}-1\geq 1$ coordinates set to $0$, as required.
\end{proof}

Observe that $\cocycle{PG(0,2)} = \{0,1\}$ and $\cocycle{PG(1,2)}=\{000,011,$ $101,110\}$. In particular, \begin{align*}
\cuboid{\cocycle{PG(0,2)}} &= \{\{1\},\{2\}\}\\
\cuboid{\cocycle{PG(1,2)}} &= \{\{2,4,6\},\{2,3,5\},\{1,4,5\},\{1,3,6\}\}=Q_6.
\end{align*}

\subsection{Embedding projective geometries}

A clutter $\mathcal{C}$ {\it embeds $PG(k-2,2)$} if some subset of $\mathcal{C}$ is a duplication of the cuboid of $\cocycle{PG(k-2,2)}$. 

\begin{RE}\label{embed-small-PGs}
Let $\mathcal{C}$ be a clutter over ground set $V$. The following statements hold: \begin{enumerate}
\item $\mathcal{C}$ embeds $PG(0,2)$ if, and only if, $\mathcal{C}$ has two members that partition $V$,
\item $\mathcal{C}$ embeds $PG(1,2)$ if, and only if, $\mathcal{C}$ has four members of the form $I_2\cup I_4\cup I_6, I_2\cup I_3\cup I_5, I_1\cup I_4\cup I_5,I_1\cup I_3\cup I_6$, where $I_1,\ldots,I_6$ are nonempty sets that partition $V$
\end{enumerate}
\end{RE}

Cornu\'{e}jols, Guenin and Margot refer to the equivalent condition in Remark~\ref{embed-small-PGs}~(2) as the {\it $Q_6$ property}~\cite{Cornuejols00}. In that paper, they prove that a subclass of ideal tangled clutters, namely, ideal {\it minimally non-packing} clutters with covering number two, enjoys the $Q_6$ property. We propose the following conjecture extending their result to all ideal tangled clutters.

\begin{CN}\label{PG-2}
There exists an integer $\ell\geq 3$ such that every ideal tangled clutter embeds one of $PG(0,2),\ldots,$ $PG(\ell-1,2)$.
\end{CN}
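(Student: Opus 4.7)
The plan is to attack Conjecture~\ref{PG-2} via a reduction to cuboids, followed by a minimality/subspace argument that realizes the projective geometry cocycle structure inside $S$. This plan will yield the conjecture with $\ell = 3$ for binary clutters by invoking Theorem~\ref{main}; the chief obstacle to the full conjecture will be finding a meaningful notion of ``subspace'' inside an arbitrary cube-ideal set.

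First, I would reduce to the cuboid setting. Given an ideal tangled clutter $\mathcal{C}$, Theorem~\ref{ideal-core} says $\core{\mathcal{C}}$ is a duplication of some $\cuboid{S}$ with $S$ cube-ideal, and since $\core{\mathcal{C}} \subseteq \mathcal{C}$, any projective geometry embedding into the core is automatically an embedding into $\mathcal{C}$. For binary clutters one can simplify further: after replacing $\mathcal{C}$ by a deletion minor that is tangled, Proposition~\ref{binaryclutter->cuboid} represents $\mathcal{C}$ directly as a duplication of $\cuboid{S}$, and Remark~\ref{binaryclutter<->binaryspace} guarantees that $S$ is moreover a \emph{binary space} containing $\mathbf{0}$ whose points do not agree on any coordinate.

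Next, I would extract a \emph{minimal} subspace $T \subseteq S$ whose points still do not agree on any coordinate, and claim that $\cuboid{T}$ is a duplication of $\cuboid{\cocycle{PG(d-1,2)}}$, where $d := \dim T$. To prove the claim, fix any nonzero linear functional $\ell : T \to GF(2)$ and let $H = \ker \ell$; since $H$ is a proper subspace of $T$, minimality forces $H$ to agree on some coordinate $i$, and because $\mathbf{0} \in H$ the agreed-upon value must be $0$, so $H \subseteq \{x : x_i = 0\}$. Hence the coordinate-evaluation functional $\pi_i : T \to GF(2)$ vanishes on $\ker \ell$, so over $GF(2)$ either $\pi_i \equiv 0$ on $T$, contradicting the non-agreement of $T$ on coordinate $i$, or $\pi_i = \ell$. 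Thus every nonzero functional on $T$ is realized by some coordinate, and after identifying duplicate coordinates $T$ is precisely the cocycle space of $PG(d-1,2)$, yielding the desired embedding.

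Finally, I would bound $d$. Theorem~\ref{main} applied to our binary ideal tangled clutter supplies at most $4$ members of $\cuboid{S}$ with empty common intersection, equivalently at most $4$ points of $S$ that do not agree on any coordinate; symmetric-differencing three of them with the fourth (as in the proof of Remark~\ref{graph-cyclespace}) yields $4$ such points one of which is $\mathbf{0}$. These span a subspace $V \subseteq S$ of dimension at most $3$ with no coordinate-agreement, so the minimal $T \subseteq V$ has $d \leq 3$, and $\mathcal{C}$ embeds one of $PG(0,2), PG(1,2), PG(2,2)$, establishing Conjecture~\ref{PG-2} for binary clutters with $\ell = 3$. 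For the general conjecture, the hard part is that a cube-ideal set need not be closed under $\tr$, so the minimality-over-subspaces argument collapses; progress likely requires either a direct combinatorial construction of the embedded projective geometry or a structure theorem for cube-ideal sets paralleling Theorem~\ref{seymour-soc}.
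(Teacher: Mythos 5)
Note first that the statement you are addressing is an open conjecture; the paper does not prove it, and what you establish (correctly) is the binary case, which is precisely the paper's Theorem~\ref{main-strong}. Your overall route matches the paper's: reduce to a tangled deletion minor, apply Proposition~\ref{binaryclutter->cuboid} to represent it as a duplication of a cuboid $\cuboid{S}$ with $S$ a binary space whose points do not agree on any coordinate, and bound the relevant rank by $3$ via Theorem~\ref{main} (the paper invokes Theorem~\ref{soc-3cyclecover} directly, but these give the same bound here). The only place you differ is in how the projective geometry is extracted, and even there the difference is one of language rather than substance. The paper's Proposition~\ref{binaryspace->PG} works on $S^\perp$: it extends a basis of $S^\perp$ to a maximal linearly independent set whose span avoids all unit vectors, forms the binary matroid $M$ with that span as its cycle space, proves every pair of elements of $M$ lies in a triangle, and then invokes Proposition~\ref{PG-triangle} on $\si{M}$. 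Your argument works on $S$: you take a minimal binary subspace $T\subseteq S$ with no coordinate agreement and show every nonzero linear functional on $T$ is realized as a coordinate projection $\pi_i$. Under orthogonal complementation these are literally the same argument---``maximal extension of $S^\perp$ avoiding unit vectors'' is dual to ``minimal subspace of $S$ with no coordinate agreement,'' and the paper's triangle $\{f,g,i\}\in\cycle{M}$ is dual to your relation $\pi_i=\pi_f+\pi_g$ on $T$---but your formulation is a pleasant streamlining that dispenses with matroid simplification and with Proposition~\ref{PG-triangle} as a separate lemma. Your closing diagnosis of why the non-binary case resists this method (a general cube-ideal set has no subspace structure, so the minimality argument has nothing to grip) is also exactly the obstruction the paper faces; its Conjecture~\ref{PG-2} remains open beyond the binary case.
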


Let us prove that this conjecture is a strengthening of our main conjecture.

\begin{PR} \label{strengthening}
If Conjecture~\ref{PG-2} holds for $\ell$, then Conjecture~\ref{main-con} holds for $k=\ell+1$.
\end{PR}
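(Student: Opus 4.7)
The plan is to argue by contradiction. Suppose $\mathcal{C}$ is an ideal $(\ell+1)$-wise intersecting clutter. By Remark~\ref{kwise-deletion}, together with the fact that deletion minors of ideal clutters are ideal, $\mathcal{C}$ admits a deletion minor $\mathcal{C}'$ that is tangled, ideal, and $(\ell+1)$-wise intersecting. Applying Conjecture~\ref{PG-2} for $\ell$ to the ideal tangled clutter $\mathcal{C}'$, I obtain an integer $d\in\{1,\ldots,\ell\}$ and a subfamily $\mathcal{D}\subseteq\mathcal{C}'$ that is a duplication of $\cuboid{\cocycle{PG(d-1,2)}}$.

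By Proposition~\ref{PG}(5), the cuboid $\cuboid{\cocycle{PG(d-1,2)}}$ contains $d+1$ members having no common element. Lifting these $d+1$ members through the duplication yields $d+1$ members of $\mathcal{D}\subseteq\mathcal{C}'$ with no common element in the ground set of $\mathcal{C}'$, and since $d+1\leq \ell+1$ this directly contradicts the $(\ell+1)$-wise intersecting property of $\mathcal{C}'$.

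The only step that needs verification is the lifting assertion, namely that the absence of a common element is preserved under duplication. This is a routine unpacking of the definition: each element of the duplicated ground set has a well-defined predecessor in the original ground set, and a new element lies in a new member exactly when its predecessor lies in the corresponding original member; hence any common element of the lifted subfamily would project to a common element of the original subfamily. With this observation in hand, the proof is a direct chain of Remark~\ref{kwise-deletion}, Conjecture~\ref{PG-2}, and Proposition~\ref{PG}(5), and I do not anticipate any genuine obstacle along the way.
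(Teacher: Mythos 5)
Your proof is correct and follows essentially the same route as the paper's: pass to a minimal deletion minor with covering number two (hence tangled), invoke Conjecture~\ref{PG-2} to find an embedded projective-geometry cuboid, and apply Proposition~\ref{PG}(5). The only cosmetic difference is that you phrase it as a contradiction while the paper proves the contrapositive directly, and you spell out the observation (left implicit in the paper) that duplication preserves the absence of a common element.
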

\begin{proof}
Assume Conjecture~\ref{PG-2} holds for $\ell$. Let $\mathcal{C}$ be an ideal clutter with $\tau(C) \geq 2$. We need to exhibit $\leq \ell+1$ members without a common element. Let $\mathcal{C}'$ be a deletion minor of $\mathcal{C}$ that is minimal subject to $\tau(\mathcal{C}')\geq 2$. Then $\mathcal{C}'$ is an ideal tangled clutter. Thus, $\mathcal{C}'$ embeds $PG(n-1,2)$ for some $n\in [\ell]$. That is, a duplication of $\cuboid{PG(n-1,2)}$ is a subset of $\mathcal{C}'$. By Proposition~\ref{PG}~(5), $\cuboid{PG(n-1,2)}$ has $n+1$ members without a common element, so the duplication, and therefore $\mathcal{C}'$, must have $n+1$ members without a common element. Thus, $\mathcal{C}$ has $n+1\leq \ell+1$ members without a common element, as required.
\end{proof}

\subsection{Conjecture~\ref{PG-2} is true for binary clutters.}

\begin{PR}\label{PG-triangle}
Let $M$ be a simple binary matroid where every two elements appear together in a triangle. Then $M$ is a projective geometry.
\end{PR}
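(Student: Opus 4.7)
The plan is to work with a fixed $GF(2)$-representation of $M$ and translate the triangle hypothesis into an algebraic closure condition on the set of column vectors. Fix any representation of $M$ by a matrix $A$ over $GF(2)$ whose columns are indexed by $EM$, and let $S \subseteq \{0,1\}^\ell$ be the set of column vectors. Since $M$ is simple, no column is zero and no two columns are equal, so $|S| = |EM|$.

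The key translation is the following: a triangle of a binary matroid is a set of three columns whose sum is $0$. Thus the hypothesis ``every two elements lie in a triangle'' says that for every pair of distinct vectors $u,v \in S$, there exists $w \in S$ with $u+v+w = 0$, i.e.\ $u+v \in S$. Note that $u+v \neq 0$ because $u \neq v$, so this is well-defined and consistent with $0 \notin S$. I would then show that $S \cup \{0\}$ is closed under addition in $GF(2)^\ell$: for $u,v \in S \cup \{0\}$, either one is $0$ (and the sum is the other), or $u=v$ (and the sum is $0$), or $u,v$ are distinct and nonzero, in which case $u+v \in S$ by hypothesis. Hence $T := S \cup \{0\}$ is a $GF(2)$-subspace.

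Let $\ell'$ be the dimension of $T$. Then $|S| = |T| - 1 = 2^{\ell'} - 1$, and after a change of basis (i.e.\ multiplying $A$ on the left by an invertible matrix over $GF(2)$), we may assume $T$ is spanned by the first $\ell'$ standard basis vectors and that the columns of $A$ run over all nonzero vectors in $GF(2)^{\ell'}$. Since row operations over $GF(2)$ do not change the matroid represented by $A$, we conclude that $M$ is exactly the binary matroid represented by the $\ell' \times (2^{\ell'}-1)$ matrix whose columns are the nonzero vectors of $GF(2)^{\ell'}$, which is $PG(\ell'-1,2)$ by definition.

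There is no real obstacle here beyond the initial translation: once one sees that the triangle condition becomes closure of $S \cup \{0\}$ under $GF(2)$-addition, the rest is routine linear algebra together with the fact that binary matroids are determined by their representations up to row operations. The only case that warrants a brief mention is $|EM| \leq 1$, where the hypothesis is vacuous: a simple binary matroid on a single element is $PG(0,2)$, and the empty case can either be excluded or handled by convention.
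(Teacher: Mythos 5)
Your proof is correct and follows essentially the same route as the paper: both proofs fix a $GF(2)$-representation, observe that the triangle hypothesis means the sum of any two distinct columns is again a column (so the column set together with $\mathbf{0}$ is a subspace), and then conclude by a change of basis that the columns are exactly the nonzero vectors of $GF(2)^{\ell'}$. The paper phrases this by first normalizing to the form $(I \mid A')$ and then noting that $A'$ must contain all vectors with at least two ones, but the underlying closure argument is the same as yours.
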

\begin{proof}
We may assume that $EM=[r]$ for some integer $r\geq 1$. Let $A$ be a $0-1$ matrix with column labels $[r]$ and whose rows are linearly independent over $GF(2)$, where $$\cycle{M}=\{x : Ax\equiv {\bf 0} \pmod{2}\}.$$ After elementary row operations over $GF(2)$ and reordering the columns, if necessary, we may assume that $A=(I ~\vert~ A')$ where $I$ is the identity matrix of appropriate dimension. As $M$ is simple, it follows that every column of $A'$ has at least two $1$s, and no two columns of it are equal. Since every two elements of $M$ appear together in a triangle, it follows that if $a,b$ are distinct columns of $A$, then $a+b \pmod{2}$ is another column. It can now be readily checked that $A'$ consists of all $0-1$ vectors with at least two $1$s, implying in turn that $M$ is a projective geometry, as required.
\end{proof}

Given an integer $n\geq 2$ and a set $S\subseteq \{0,1\}^n$, two distinct coordinates $i,j\in [n]$ are {\it duplicates in $S$} if $S\subseteq \{x : x_i=x_j\}$ or $S\subseteq \{x : x_i + x_j=1\}$. Observe that $S$ has duplicated coordinates if, and only if, $\cuboid{S}$ has duplicated elements. We say that $S$ is a {\it duplication of $S'$} if $S'$ is obtained from $S$ after projecting away some duplicated coordinates.

\begin{PR}\label{binaryspace->PG}
Take an integer $n\geq 1$ and let $S\subseteq \{0,1\}^n$ be a binary space whose points do not agree on a coordinate. Then there exists a subset $S'\subseteq S$, whose points do not agree on a coordinate, that is a duplication of the cocycle space of a projective geometry. As a consequence, if $S$ has $GF(2)$-rank $r$, then $\cuboid{S}$ embeds one of $PG(0,2),\ldots,PG(r-1,2)$.
\end{PR}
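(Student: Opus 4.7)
The strategy is purely linear-algebraic. Identify $S$ with the row space of an $r\times n$ matrix $M$ whose rows form a basis of $S$ over $GF(2)$; each coordinate $j\in[n]$ is then associated with its column $M^j\in\{0,1\}^r$, and the hypothesis that the points of $S$ do not agree on any coordinate is equivalent to $M^j\neq\mathbf{0}$ for every $j$. Write $C:=\{M^j:j\in[n]\}\subseteq\{0,1\}^r\setminus\{\mathbf{0}\}$. I plan to build $S'$ by passing to an appropriate quotient $\{0,1\}^r\to\{0,1\}^r/K\cong\{0,1\}^\ell$; concretely, I will look for a subspace $K\subseteq\{0,1\}^r$ satisfying (a) $K\cap C=\emptyset$ and (b) every nonzero coset of $K$ meets $C$. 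These two conditions will respectively force that the representation of $S'$ has no zero column and that every nonzero vector of $\{0,1\}^\ell$ appears as a column, which is exactly what it means for $S'$ to be a duplication of $\cocycle{PG(\ell-1,2)}$.

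The heart of the proof is the production of such a $K$. I take $K$ to be of \emph{maximum} dimension subject only to (a); this is possible since $K=\{\mathbf{0}\}$ qualifies. Condition (b) then comes for free from the maximality: if $v\notin K$ were such that $(v+K)\cap C=\emptyset$, then $\langle K,v\rangle = K\cup(v+K)$ would be a strictly larger subspace disjoint from $C$, contradicting the maximality. Set $\ell:=r-\dim K$; since $C\neq\emptyset$ forces $K\neq\{0,1\}^r$, we have $\ell\ge 1$.

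Now let $L:=K^\perp$ (orthogonal complement under the $GF(2)$ dot product, of dimension $\ell$, with $L^\perp=K$), pick a basis $\lambda_1,\ldots,\lambda_\ell$ of $L$, and define $S':=\{\mu^\top M : \mu\in L\}\subseteq S$. Since the rows of $M$ are linearly independent, $\dim S'=\dim L=\ell$. The $j$th column of the representation of $S'$ in this basis is the image of $M^j$ under the quotient $\pi:\{0,1\}^r\to\{0,1\}^\ell$ with $\ker\pi=L^\perp=K$. By (a) and (b), the multiset of these columns is precisely $\{0,1\}^\ell\setminus\{\mathbf{0}\}$ (each nonzero vector appearing at least once), so after projecting away duplicated coordinates the representation of $S'$ is exactly that of $PG(\ell-1,2)$. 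Hence $S'$ is a duplication of $\cocycle{PG(\ell-1,2)}$; and since $\mathbf{0}\in S'$ and no coordinate is always zero, no two points of $S'$ agree on a coordinate.

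For the consequence, $\ell\in[r]$, so $PG(\ell-1,2)$ is one of $PG(0,2),\ldots,PG(r-1,2)$; a coordinate duplication on the set $S'$ corresponds to the simultaneous duplication of the two associated elements of $\cuboid{S'}$, so $\cuboid{S'}$ is a duplication of $\cuboid{\cocycle{PG(\ell-1,2)}}$ at the clutter level, while $S'\subseteq S$ gives $\cuboid{S'}\subseteq\cuboid{S}$. The main obstacle is conceptual rather than technical: one has to set up the right dictionary between ``a subspace of the row space of $M$'' and ``a quotient of the small space $\{0,1\}^r$'', and in particular be careful that condition (a) lives in the ambient $\{0,1\}^r$ (it asks $C$ to avoid $K$, not $L$) while the representation of $S'$ lives in the quotient $\{0,1\}^r/K$. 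Once that dictionary is in place, the maximality argument produces $K$ essentially for free.
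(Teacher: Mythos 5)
Your proof is correct, and it reaches the same destination by a dual route. The paper works in $\{0,1\}^n$: it writes $S=\ker A$ over $GF(2)$, extends the rows $a_1,\ldots,a_m$ of $A$ to a maximal linearly independent family $\{a_1,\ldots,a_k\}$ whose span avoids all unit vectors, forms the binary matroid $M$ with $\cycle{M}=\langle a_1,\ldots,a_k\rangle$, uses the maximality to show every two elements of $M$ lie in a common cycle of size at most three, and then invokes Proposition~\ref{PG-triangle} together with matroid simplification to conclude that $S'=\cocycle{M}\subseteq S$ is a duplication of a projective-geometry cocycle space. You instead realize $S$ as the row space of an $r\times n$ matrix, work with its column set $C\subseteq\{0,1\}^r\setminus\{\mathbf 0\}$, and pick a maximal subspace $K$ disjoint from $C$; maximality of $K$ directly forces every nonzero coset of $K$ to meet $C$, so the quotient representation of $S'=\{\mu^\top M:\mu\in K^\perp\}$ already has every nonzero vector of $\{0,1\}^\ell$ as a column. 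This absorbs the appeal to Proposition~\ref{PG-triangle} and to simplification into one linear-algebraic computation: both proofs are exhibiting a minimal binary subspace of $S$ whose points do not all agree on a coordinate, but yours verifies the projective-geometry conclusion on the spot rather than via the ``every two elements in a triangle'' characterization, making it more self-contained at the cost of bookkeeping between $\{0,1\}^r$ and its quotient $\{0,1\}^\ell$, which you handle correctly.
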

\begin{proof}
Let $A$ be a $0-1$ matrix whose rows $a_1,\ldots,a_m$ are linearly independent over $GF(2)$ such that $S=\{x\in \{0,1\}^n : Ax\equiv {\bf 0} \pmod{2}\}$. Clearly $\langle a_1,\ldots,a_m\rangle$ is the orthogonal complement of $S$ over $GF(2)$. As the points in $S$ do not agree on a coordinate, $\langle a_1,\ldots,a_m\rangle$ does not contain the unit vectors $e_1,\ldots,e_n$ (see Remark~\ref{matroid-cyclespace}~(1)). Extend $\{a_1,\ldots,a_m\}$ to a set $\{a_1,\ldots,a_m,\ldots,a_k\}$ of $0-1$ vectors such that \begin{enumerate}[(i)]
\item $a_1,\dots,a_k$ are linearly independent over $GF(2)$,
\item $\langle a_1,\ldots,a_k\rangle$ does not contain $e_1,\ldots,e_n$,
\item $\{a_1,\ldots,a_k\}$ is maximal subject to (i)-(ii).
\end{enumerate} 
Let $M$ be the binary matroid over ground set $[n]$ whose cycle space is $\langle a_1,\ldots,a_k\rangle$. It follows from (ii) that $M$ has no loop. 

\setcounter{claim_nb}{0} 

\begin{CLM} 
For every pair of distinct elements $f,g$, there is a cycle of $M$ of cardinality at most three containing $f,g$.
\end{CLM}
\begin{cproof}
Let $a:=\chi_{\{f,g\}}\in \{0,1\}^n$. If $a\in \langle a_1,\ldots,a_k\rangle$, then $\{f,g\}$ is a cycle of $M$. Otherwise, $a_1,\ldots,a_k,a$ are linearly independent over $GF(2)$, so by (iii), $\langle a_1,\ldots,a_k,a\rangle$ must contain one of $e_1,\ldots,e_n$. Neither $f$ nor $g$ is a loop of $M$, so $\langle a_1,\ldots,a_k\rangle$ must contain a vector $b$ with three $1$s such that $b_f=b_g=1$. That is, $M$ has a triangle containing $f,g$, as required. 
\end{cproof}

Let $M':=\si{M}$, the simplification of $M$. Then $\cocycle{M}$ is a duplication of $\cocycle{M'}$. Claim 1 implies that every two elements of $M'$ appear together in a triangle, so $M'$ is a projective geometry by Proposition~\ref{PG-triangle}. Thus, $\cocycle{M}$ is a duplication of the cocycle space of a projective geometry. Notice however that $\cocycle{M}=\{x\in \{0,1\}^n:Bx\equiv {\bf 0} \pmod{2}\}$ where $B$ is the matrix whose rows are $a_1,\ldots,a_k$. Subsequently, $S':=\cocycle{M}\subseteq S$. As $M$ has no loop, the points in $S'$ do not agree on a coordinate, so $S'$ is the desired set. 
\end{proof}

As an application of Proposition~\ref{binaryspace->PG}, Conjecture~\ref{PG-2} can be rephrased as, \emph{There exists an integer $\ell\geq 3$ such that every ideal tangled clutter ``embeds a binary matroid" of rank at most $\ell$}. As a second application of Proposition~\ref{binaryspace->PG}, we prove Conjecture~\ref{PG-2} for $\ell=3$ for the class of binary clutters.

\begin{THM}\label{main-strong}
Every ideal binary tangled clutter embeds $PG(0,2)$, $PG(1,2)$, or $PG(2,2)$.
\end{THM}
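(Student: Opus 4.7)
The first move is the same reduction used in the proof of Theorem~\ref{main}. Let $\mathcal{C}$ be an ideal binary tangled clutter. By Proposition~\ref{binaryclutter->cuboid}, $\mathcal{C}$ is a duplication of $\cuboid{S}$ for some $S\subseteq\{0,1\}^n$, and we may translate so that $\mathbf{0}\in S$. Since duplications preserve both idealness and binariness, $\cuboid{S}$ is an ideal binary clutter; together with Theorem~\ref{ideal-cuboids} and Remark~\ref{binaryclutter<->binaryspace}, this makes $S$ a cube-ideal binary space. Tangledness of $\mathcal{C}$ gives $\tau(\cuboid{S})\geq 2$, which (by the trivial direction of Remark~\ref{kwise-kwise}) forces the points of $S$ to not agree on any coordinate.

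The key step is to invoke Theorem~\ref{soc-3cyclecover}: a cube-ideal binary space whose points do not agree on a coordinate must contain a subset of at most $4$ points that already fail to agree on any coordinate. Let $q_1,\ldots,q_t\in S$ with $t\leq 4$ be such a subset. Because $S$ is a binary space, the translated set $\{q_i\triangle q_1:i\in[t]\}$ also lies in $S$, and translation preserves disagreement on every coordinate (if all translated points shared coordinate $j$, then so would the $q_i$). So I may assume $\mathbf{0}\in\{q_1,\ldots,q_t\}$; write the non-zero translates as $p_1,\ldots,p_{t-1}$, where $t-1\leq 3$.

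Now form the $GF(2)$-span $T:=\langle p_1,\ldots,p_{t-1}\rangle\subseteq S$. Then $T$ is a binary subspace of $\{0,1\}^n$ of $GF(2)$-rank at most $3$, and since $T\supseteq\{\mathbf{0},p_1,\ldots,p_{t-1}\}$ the points of $T$ still do not agree on any coordinate. Applying Proposition~\ref{binaryspace->PG} to $T$, $\cuboid{T}$ embeds one of $PG(0,2),PG(1,2),PG(2,2)$. Because $T\subseteq S$ every member of $\cuboid{T}$ is a member of $\cuboid{S}$, so the same embedding is witnessed inside $\cuboid{S}$; and since a duplication of a duplication is a duplication, this embedding lifts to $\mathcal{C}$, finishing the proof.

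\textbf{Main obstacle.} There is no genuinely hard step, because the heavy lifting (Jaeger's $8$-flow theorem and Seymour's sums-of-circuits decomposition) has already been packaged into Theorem~\ref{soc-3cyclecover}, and the conversion from a cube-ideal binary space to a projective-geometry embedding has already been packaged into Proposition~\ref{binaryspace->PG}. The only non-mechanical points are (a) the translation trick that ensures $\mathbf{0}$ lies among the $\leq 4$ non-agreeing witnesses, so that these witnesses span a binary subspace of rank at most~$3$, and (b) verifying that the embedding of a projective geometry is transparent under subset inclusion $\cuboid{T}\subseteq\cuboid{S}$ and under duplication $\cuboid{S}\leadsto\mathcal{C}$.
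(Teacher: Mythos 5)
Your proof is correct and follows essentially the same route as the paper: reduce to a cuboid via Proposition~\ref{binaryclutter->cuboid}, translate idealness and the tangled condition into $S$ being a cube-ideal binary space with no agreeing coordinate, invoke Theorem~\ref{soc-3cyclecover} to get $\leq 4$ non-agreeing points, take their $GF(2)$-span (rank $\leq 3$), and finish with Proposition~\ref{binaryspace->PG}. The one place you are actually more careful than the paper is the explicit translation by $q_1$: the paper's proof states that ``$S$ has a subset of at most $3$ points that do not agree on a coordinate'' and lets these generate $S'$ of rank $\leq 3$, which only works because the $\leq 4$ witnesses produced by Theorem~\ref{soc-3cyclecover} (via Remark~\ref{matroid-cyclespace}) can be taken to include $\mathbf{0}$; your translation argument makes that harmless but implicit assumption explicit, so the rank bound holds for an arbitrary choice of witnesses.
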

\begin{proof}
Let $\mathcal{C}$ be a binary tangled clutter. By Proposition~\ref{binaryclutter->cuboid}, $\mathcal{C}$ is a duplication of a cuboid, say $\cuboid{S}$ for some $S$ containing ${\bf 0}$. It suffices to show that $\cuboid{S}$ embeds one of the three projective geometries. Note that $\cuboid{S}$ is an ideal binary cuboid with $\tau(\cuboid{S}) \geq 2$. By Theorem~\ref{ideal-cuboids} and Remark~\ref{binaryclutter<->binaryspace}, $S$ is a cube-ideal binary space whose points do not agree on a coordinate.  By Theorem~\ref{soc-3cyclecover}, $S$ has a subset of at most $3$ points that do not agree on a coordinate. Let $S'$ be the binary space generated by these points. Note that $S'\subseteq S$, the points in $S'$ do not agree on a coordinate, and $S'$ has $GF(2)$-rank at most $3$. By Proposition~\ref{binaryspace->PG}, $\cuboid{S'}$, and therefore $\cuboid{S}$, embeds one of $PG(0,2),PG(1,2),PG(2,2)$, as desired.
\end{proof}


\section{Two applications}\label{sec:apps}

In this final section, we discuss two applications of Theorem~\ref{main-strong}. The first application is a result in Graph Theory, that every bridgeless graph has a so-called {\it $7$-cycle $4$-cover}.

\begin{THM}[\cite{Bermond83}, Proposition 6]
Every bridgeless graph has $7$ cycles such that every edge is used in exactly $4$ of the cycles.
\end{THM}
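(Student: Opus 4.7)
Let $G=(V,E)$ be a bridgeless graph and set $\mathcal{C} := \cuboid{\cycle{G}}$. The plan is to apply Theorem~\ref{main-strong} to $\mathcal{C}$ and turn the resulting projective-geometry embedding into a $7$-cycle $4$-cover.

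First I verify that $\mathcal{C}$ is an ideal binary tangled clutter. Idealness follows from Theorem~\ref{graph-cubeideal} (cube-idealness of $\cycle{G}$) via Theorem~\ref{ideal-cuboids}; binaryness follows from Remark~\ref{binaryclutter<->binaryspace} since $\cycle{G}$ is a binary space containing $\mathbf{0}$; and tangledness follows because bridgelessness of $G$, together with Remark~\ref{graph-cyclespace}(1), gives $\tau(\mathcal{C})=2$, while each pair $\{e^+,e^-\}$ is a minimum cover and these pairs together meet every element of the ground set of $\mathcal{C}$. Theorem~\ref{main-strong} then asserts that $\mathcal{C}$ embeds one of $PG(0,2)$, $PG(1,2)$, $PG(2,2)$.

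If $\mathcal{C}$ embeds $PG(0,2)$, Remark~\ref{embed-small-PGs}(1) supplies two cycles of $G$ that partition $E$; their symmetric difference is $E$, so $E\in\cycle{G}$, i.e.\ $G$ is Eulerian, and four copies of $E$ together with three empty cycles furnish the required $4$-cover. If $\mathcal{C}$ embeds $PG(1,2)$, Remark~\ref{embed-small-PGs}(2) produces a partition of $E$ into six nonempty sets $I_1,\dots,I_6$ together with four cycles $C_1,\dots,C_4$ in which each $I_k$ lies in exactly two of them; adjoining the three symmetric differences $C_1\tr C_2$, $C_1\tr C_3$, $C_2\tr C_3$, which are cycles since $\cycle{G}$ is closed under $\tr$, yields a list of seven cycles, and a direct six-case check shows that each $I_k$ appears in exactly $4$ of them.

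The $PG(2,2)$ case is the substantive one. Here I would unpack the duplication of $\cuboid{\cocycle{F_7}}$ inside $\mathcal{C}$ to extract a partition of $E$ into seven nonempty parts $J_1,\dots,J_7$ and eight cycles $C_K$ of $G$ indexed by $K\in\cocycle{F_7}$. The translates $D_K := C_K \tr C_{\mathbf{0}}$ are themselves cycles of $G$, equal $\bigcup_{j\in K}J_j$, and form a $3$-dimensional binary subspace of $\cycle{G}$ isomorphic to $\cocycle{F_7}$. By Proposition~\ref{PG}(4) and a double-count, each $j\in[7]$ lies in exactly $4$ of the $8$ cocycles of $F_7$, so the seven nonempty $D_K$ form the desired $4$-cover. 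The main obstacle is precisely this translation step: turning the abstract duplication description into a usable partition of $E$ with an explicit cycle family, via the observation that shifting by $C_{\mathbf{0}}$ linearizes the structure even when the $C_K$ themselves form merely an affine family in $\cycle{G}$.
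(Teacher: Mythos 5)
Your proof is correct and starts from exactly the same key step as the paper: apply Theorem~\ref{main-strong} to $\cuboid{\cycle{G}}$. The difference is in how you extract the $7$-cycle $4$-cover from the projective-geometry embedding. You handle the three cases $PG(0,2),PG(1,2),PG(2,2)$ with three separate constructions (padding with copies of $E$ and $\emptyset$; adjoining the three pairwise symmetric differences $C_i\tr C_j$; translating by $C_{\mathbf{0}}$ and invoking Proposition~\ref{PG}(4)). The paper instead makes a single uniform observation that subsumes all three: whichever $PG(\ell-1,2)$ is embedded, $\cuboid{\cocycle{PG(\ell-1,2)}}$ has $2^\ell$ members each element of which is in exactly $2^{\ell-1}$ of them, so after repeating cycles as needed one always gets $8$ cycles of $G$ with each edge in exactly $4$; then, because translating the whole family by any one of the cycles preserves the ``each edge in $4$'' property, one may assume one of the $8$ cycles is $\emptyset$ and drop it. Your translation-by-$C_{\mathbf{0}}$ idea, which you describe as ``the main obstacle'' and deploy only in the $PG(2,2)$ case, is in fact the whole argument in the paper and it applies verbatim in all three cases, making the separate $PG(0,2)$ and $PG(1,2)$ constructions (and the six-case check for $PG(1,2)$) unnecessary.
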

\begin{proof}
Let $G$ be a bridgeless graph. Applying Theorem~\ref{main-strong} to $\cuboid{\cycle{G}}$, we see that $G$ has $8$ cycles where every edge is used in exactly $4$ of the cycles (some of the cycles may be repeated). Since one of the $8$ cycles may be assumed to be $\emptyset$, $G$ has $7$ cycles such that each edge is in exactly $4$ of the cycles.
\end{proof}

For the next application, we prove that Conjecture~\ref{seymour-quarter} holds for ideal binary clutters with covering number two. We need the following ingredient.

\begin{PR}[\cite{Abdi-PGs}]\label{PG->integral}
For every $k \in \mathbb{Z}_{\geq 0}$, $\cuboid{\cocycle{PG(k,2)}}$ has a $\frac{1}{2^{k}}$-integral packing of value two.
\end{PR}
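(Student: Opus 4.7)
The plan is to exhibit the uniform packing that places weight $\frac{1}{2^{k}}$ on each member of $\cuboid{\cocycle{PG(k,2)}}$, and verify directly that this is a valid fractional packing of value two.

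First I would set up notation. Write $n := 2^{k+1}-1$ for the number of elements of $PG(k,2)$, and let $S := \cocycle{PG(k,2)} \subseteq \{0,1\}^n$. By Proposition~\ref{PG}~(2) (applied with $\ell = k+1$), we have $|S| = 2^{k+1}$. Recall that the members of $\cuboid{S}$ over ground set $[2n]$ are in bijection with the points of $S$: a point $p \in S$ yields the member with incidence vector $(p_1, 1-p_1, \ldots, p_n, 1-p_n)$. Define $y \in \mathbb{R}^{\cuboid{S}}_{+}$ by assigning $y_C = \frac{1}{2^{k}}$ to every $C \in \cuboid{S}$.

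Next I would verify the value. There are $|S| = 2^{k+1}$ members, each receiving weight $\frac{1}{2^{k}}$, so ${\bf 1}^\top y = 2^{k+1}/2^{k} = 2$, as desired. The vector $y$ is $\frac{1}{2^{k}}$-integral by construction.

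The main step is to check the packing constraints. Consider an element $u \in [2n]$, say $u = 2i-1$ for some $i \in [n]$ (the case $u=2i$ is symmetric). The members of $\cuboid{S}$ containing $u$ correspond precisely to the points $p \in S$ with $p_i = 1$. Here I would invoke the following linear-algebraic fact: if $S$ is a binary subspace of $\{0,1\}^n$ and coordinate $i$ is not identically zero on $S$, then exactly half of the points of $S$ have $p_i = 1$. It therefore suffices to show that no coordinate is identically zero on $S = \cocycle{PG(k,2)}$; equivalently, that $PG(k,2)$ has no loop. This is immediate from the definition of $PG(k,2)$, whose representation has as columns all the nonzero vectors of $\{0,1\}^{k+1}$, so no column is zero. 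Hence $|\{p \in S : p_i = 1\}| = 2^{k}$, and the total weight on $u$ is $\frac{1}{2^{k}} \cdot 2^{k} = 1$, so the constraint is met (with equality).

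There is no real obstacle here: once the symmetry is exploited, everything reduces to a one-line counting argument using the dimension of the cocycle space together with the absence of loops in $PG(k,2)$.
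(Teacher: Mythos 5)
The paper does not prove this proposition itself; it cites it as a known result from~\cite{Abdi-PGs}, so there is no internal proof to compare against. Your self-contained argument is correct: the uniform weight $\frac{1}{2^k}$ on all $2^{k+1}$ members gives value two, and because $\cocycle{PG(k,2)}$ is a binary space of size $2^{k+1}$ on which the coordinate functional $p\mapsto p_i$ is nonzero (since $PG(k,2)$ has no loops, i.e.\ no zero column in its representation), exactly $2^k$ points take each value on coordinate $i$, making the load on every element of the cuboid exactly $1$. The only cosmetic redundancy is that once you invoke the standard fact that a nonzero $GF(2)$-linear functional on a binary space splits it evenly, the count $|S|=2^{k+1}$ from Proposition~\ref{PG}~(2) is the only further input needed; the rest is routine verification.
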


We are now ready for the second application.

\begin{THM}\label{main-strong-2}
Every ideal binary clutter $\mathcal{C}$ with $\tau(\mathcal{C})\geq 2$ has a $\frac{1}{4}$-integral packing of value two.
\end{THM}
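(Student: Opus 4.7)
My plan is to chain three ingredients: a standard reduction to a tangled deletion minor, the structural embedding guaranteed by Theorem~\ref{main-strong}, and the explicit fractional packings for cuboids of projective geometry cocycle spaces from Proposition~\ref{PG->integral}. Since the conclusion only asks for a packing of value two (not of value $\tau(\mathcal{C})$), the bounded-rank embedding is exactly strong enough: the ranks $k\in\{0,1,2\}$ produced by Theorem~\ref{main-strong} match the threshold $\frac{1}{2^{2}}=\frac{1}{4}$ in the integrality demand.

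First I would reduce to the tangled case. Choose a deletion minor $\mathcal{C}'$ of $\mathcal{C}$ that is minimal subject to $\tau(\mathcal{C}')\geq 2$; then $\mathcal{C}'$ is tangled by construction. Being a minor of an ideal binary clutter, $\mathcal{C}'$ remains ideal and binary. Any fractional packing of $\mathcal{C}'$ extends to a fractional packing of $\mathcal{C}$ of the same value by giving weight zero to the members of $\mathcal{C}\setminus \mathcal{C}'$: the packing inequalities for the deleted elements then reduce to $0\leq 1$, and those for retained elements are unchanged. So it suffices to exhibit a $\frac{1}{4}$-integral packing of value two in $\mathcal{C}'$.

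Next I would invoke Theorem~\ref{main-strong}, which produces some $k\in\{0,1,2\}$ and a subset $\mathcal{C}''\subseteq \mathcal{C}'$ that is a duplication of $\mathcal{C}_0:=\cuboid{\cocycle{PG(k,2)}}$. Proposition~\ref{PG->integral} supplies a $\frac{1}{2^{k}}$-integral packing $y^\circ$ of $\mathcal{C}_0$ of value two, which is in particular $\frac{1}{4}$-integral since $k\leq 2$. I would then transfer $y^\circ$ to a packing $y'$ of $\mathcal{C}''$ via the natural bijection between the members of $\mathcal{C}_0$ and those of $\mathcal{C}''$ induced by the duplication: a duplicated element $\bar{u}$ appears in precisely the members of $\mathcal{C}''$ that correspond to members of $\mathcal{C}_0$ containing $u$, so $\bar{u}$ inherits the same covering inequality as $u$ and $y'$ is feasible, $\frac{1}{4}$-integral, and of value two. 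Finally, extending $y'$ by zero to all members of $\mathcal{C}'\setminus \mathcal{C}''$ and then to $\mathcal{C}\setminus \mathcal{C}'$ preserves value, integrality, and feasibility, yielding the desired packing of $\mathcal{C}$.

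The argument is essentially bookkeeping once Theorem~\ref{main-strong} and Proposition~\ref{PG->integral} are in hand; the only points requiring care are that duplication and zero-extension respect fractional packings, both of which are routine. I do not anticipate a significant obstacle here, since all of the deep work—the $8$-Flow Theorem and Seymour's sums-of-circuits decomposition underlying Theorem~\ref{main-strong}, and the explicit packings of projective geometry cuboids underlying Proposition~\ref{PG->integral}—has already been carried out.
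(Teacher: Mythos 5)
Your proposal is correct and matches the paper's own proof: reduce to a tangled deletion minor $\mathcal{C}'$, invoke Theorem~\ref{main-strong} to embed $PG(k,2)$ for some $k\leq 2$, pull back the $\frac{1}{2^k}$-integral packing from Proposition~\ref{PG->integral}, and zero-extend. The paper states the last two transfers in a single line, whereas you spell out the duplication bijection and the extension from minor to original, but the underlying argument is identical.
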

\begin{proof}
Let $\mathcal{C}'$ be a deletion minor of $\mathcal{C}$ that is minimal subject to $\tau(\mathcal{C}')\geq 2$. Then $\mathcal{C}'$ is an ideal binary tangled clutter, so by Theorem~\ref{main-strong}, $\mathcal{C}'$ embeds one of $PG(0,2),PG(1,2),PG(2,2)$. By
Proposition~\ref{PG->integral}, $\mathcal{C}'$, and therefore $\mathcal{C}$, has a $\frac{1}{4}$-integral packing of value $2$, as required.
\end{proof}


\section*{Acknowledgements}

This work was supported by NSERC PDF grant 516584-2018, ONR grant 00014-18-12129, the Australian Research Council, ERC grant FOREFRONT (grant agreement no. 615640), and the Institute for Basic Science (IBS-R029-C1). We would like to thank the referees of our manuscript and its extended abstract~\cite{Abdi-2020}; their valuable feedback has improved our presentation.

{\small \bibliographystyle{abbrv}\bibliography{references}}

\end{document}